\definecolor{red}{rgb}{1,0,0}
\definecolor{gre}{rgb}{0,0.7,0}
\definecolor{blu}{rgb}{0,0,1}
\newtheorem{thm}{Theorem}[section]
\newtheorem{lem}[thm]{Lemma}
\newtheorem{cor}[thm]{Corollary}
\newtheorem{prop}[thm]{Proposition}
\theoremstyle{definition}
\theoremstyle{remark}
\numberwithin{equation}{section}
\definecolor{esperance}{rgb}{0.0,0.5,0.0}
\newcommand{\bb}{\mathbf{b}}
\newcommand{\bd}{\mathbf{d}}
\newcommand{\be}{\mathbf{e}}
\newcommand{\bm}{\mathbf{m}}
\newcommand{\bp}{\mathbf{p}}
\newcommand{\bu}{\mathbf{u}}
\newcommand{\bv}{\mathbf{v}}
\newcommand{\bw}{\mathbf{w}}
\newcommand{\bx}{\mathbf{x}}
\newcommand{\by}{\mathbf{y}}
\newcommand{\balpha}{\boldsymbol{\alpha}}
\newcommand{\bbeta}{\boldsymbol{\beta}}
\newcommand{\bxi}{\boldsymbol{\xi}}
\newcommand{\bupsilon}{\boldsymbol{\upsilon}}
\newcommand{\R}{\mathbb{R}}
\newcommand{\supp}{\mathrm{supp\,}}
\newcommand{\del}{\delta}
\newcommand{\eps}{\epsilon}
\newcommand{\sig}{\sigma}
\newcommand{\om}{\omega}
\newcommand{\Om}{\Omega}
\newcommand{\cB}{\mathcal{B}}
\newcommand{\cC}{\mathcal{C}}
\newcommand{\cD}{\mathcal{D}}
\newcommand{\cF}{\mathcal{F}}
\newcommand{\cK}{\mathcal{K}}
\newcommand{\cL}{\mathcal{L}}
\newcommand{\cN}{\mathcal{N}}
\newcommand{\cP}{\mathcal{P}}
\newcommand{\cR}{\mathcal{R}}
\newcommand{\cS}{\mathcal{S}}
\newcommand{\cT}{\mathcal{T}}
\newcommand{\bC}{\mathbb{C}}
\newcommand{\bM}{\mathbb{M}}
\newcommand{\bR}{\mathbb{R}}
\newcommand{\bZ}{\mathbb{Z}}
\newcommand{\bQ}{\mathbb{Q}}
\newcommand{\bN}{\mathbb{N}}
\newcommand{\bT}{\mathbb{T}}
\newcommand{\bS}{\mathbb{S}}
\newcommand{\SL}{\operatorname{SL}}
\newcommand{\SO}{\operatorname{SO}}
\newcommand*{\transp}[2][-1mu]{\ensuremath{\mskip1mu\prescript{\smash{\mathrm t\mkern#1}}{}{\mathstrut#2}}}
\newcommand\set[1]{\left\{#1\right\}}
\newcommand{\onto}{\xymatrix{\ar@{>>}[r]&}}
\newcommand{\eq}[1]
{
\begin{equation*}
{#1}
\end{equation*}
}
\newcommand{\eqlabel}[2]
{
\begin{equation}
{#2}\label{#1}
\end{equation}
}
\newcommand*{\rom}[1]{\expandafter\@slowromancap\romannumeral #1@}
\begin{document}

\title[Multi-dimensional affine lattices]{Poissonian pair correlation for directions in multi-dimensional affine lattices, and escape of mass estimates for embedded horospheres}
\author{Wooyeon Kim and Jens Marklof}
\date{25 February 2023/15 March 2024. To appear in Ergodic Theory and Dynamical Systems.}
\thanks{Research supported by EPSRC grant EP/S024948/1. W.K. is supported by Korea Foundation for Advanced Studies (KFAS)}
\thanks{MSC2020: 11K36, 11J71, 11P21, 37A17}

\begin{abstract}
We prove the convergence of moments of the number of directions of affine lattice vectors that fall into a small disc, under natural Diophantine conditions on the shift. Furthermore, we show that the pair correlation function is Poissonian for {\em any} irrational shift in dimension 3 and higher, including well-approximable vectors. Convergence in distribution was already proved in the work of Str\"ombergsson and the second author, and the principal step in the extension to convergence of moments is an escape of mass estimate for averages over embedded $\SL(d,\R)$-horospheres in the space of affine lattices.
\end{abstract}

\maketitle
\section{Introduction}

It is often difficult to rigorously determine the pseudorandom properties of a given sequence of real numbers modulo one, including even the simplest second-order correlation functions. In the present paper we consider the problem in higher dimension and construct an explicit sequence of points $\bupsilon_1,\bupsilon_2,\bupsilon_3,\ldots$ on the unit sphere $\bS^{d-1}$ whose two-point statistics converge to that of a Poisson point process. This sequence is given by the unit vectors $\bupsilon_j=\| \by_j \|^{-1} \by_j$ representing the directions of vectors $\by_j$ in a fixed affine lattice in $\bR^d$ of unit covolume. Here the $\by_j$ are listed in increasing  length $\|\by_j\|$, where $\| \,\cdot\,\|$ denotes the Euclidean norm. If there are two or more vectors of the same length we take them in arbitrary order (our results will not depend on the choice made). If there are several lattice points with the same direction, they will appear repeatedly in the sequence. 
Our approach extends the results of \cite{EMV15}, which in turn builds on \cite{MS10}, from $d=2$ to higher dimensions. 
We are furthermore able to relax the Diophantine hypotheses imposed in \cite{EMV15}.

A sequence $(\bupsilon_j)_{j=1}^\infty$ on $\bS^{d-1}$ is called {\em uniformly distributed}, if for any set $\cD\subseteq\bS^{d-1}$ with $\operatorname{vol}_{\bS^{d-1}}(\partial\cD)=0$ we have that
$$
\lim_{N\to\infty} \frac{1}{N} \#\left\{ j \leq N : \bupsilon_j \in\cD \right\} = \frac{\operatorname{vol}_{\bS^{d-1}}(\cD)}{V_{\bS^{d-1}}},
$$
where $V_{\bS^{d-1}}=\operatorname{vol}_{\bS^{d-1}}(\bS^{d-1})$. The {\em pair correlation function} of the partial sequence $(\bupsilon_j)_{j=1}^N$ is defined as
$$
R^2_N(s) = \frac{1}{N} \#\left\{ (j_1,j_2) \,:\, j_1,j_2\leq N, \, j_1\neq j_2,\, c_d N^{\frac{1}{d-1}}\bd_{\bS^{d-1}}(\bupsilon_{j_1},\bupsilon_{j_2}) \leq s \right\} ,
$$ 
where $c_d= V_{\bS^{d-1}}^{-\frac{1}{d-1}}$ and $\bd_{\bS^{d-1}}$ is the standard geodesic distance for the unit sphere $\bS^{d-1}$. The scaling by $c_d N^{\frac{1}{d-1}}$ ensures we are measuring correlations in units where the mean density of points is one (note that the scaled sphere $c_d N^{\frac{1}{d-1}}\bS^{d-1}$ has volume $N$). The function $R^2_N(s)$ is known as Ripley's $K$-function in the statistical literature. 

We say the pair correlation of the sequence $(\bupsilon_j)_{j=1}^\infty$ is {\em Poissonian}, if for any $s>0$
\eqlabel{eqPPC}{\lim_{N\to \infty }R^2_N(s)=\frac{\pi^{\frac{d-1}{2}} s^{d-1}}{\Gamma(\tfrac{d+1}{2})},}
which is the volume of a ball in $\bR^{d-1}$ of radius $s$. This limit holds for example almost surely for a sequence of independent and uniformly distributed random points on $\bS^{d-1}$. It coincides with the pair correlation function of a Poisson point process in $\bR^{d-1}$ of intensity one, hence the term ``Poissoninan''.

Every affine lattice of unit covolume can be explicitly written as $\cL_{\bxi}=(\bZ^d+\bxi)M_0$,
where $\bxi\in\bR^d$ and $M_0\in G=\SL(d,\bR)$. For integer shift $\bxi\in\bZ^d$ we obtain the underlying lattice $\cL=\bZ^d M_0$. It follows from classical asymptotics for the number of affine lattice points in expanding sectors with a fixed opening angle that the sequence of directions is uniformly distributed on $\bS^{d-1}$, for any shift $\bxi\in\bR^d$. 

It is an interesting observation that a Poissonian pair correlation implies uniform distribution on general compact manifolds \cite{M20}. This fact was first proved in the case of $\bS^{1}$ by Aistleitner, Lachmann, and Pausinger \cite{ALP18}, and independently by Larcher and Stockinger \cite{LS20}. For the convergence of the pair correlation function, we will however, unlike the case of uniform distribution, require Diophantine conditions on the lattice shift $\bxi$. For $\kappa\ge d$, we say that $\bxi\in\bR^d$ is Diophantine of type $\kappa$ if there exists $C_\kappa>0$ such that
$$|\bxi\cdot\bm|_\bZ> C_\kappa|\bm|^{-\kappa}$$
for any $\bm\in\bZ^d\setminus\set{0}$, where $|\cdot|$ denotes the supremum norm of $\bR^d$, and $|\cdot|_{\bZ}$ denotes the supremum distance from $0\in\bT^d$. It is known that Lebesgue-almost all $\bxi\in\bR^d$ are of type $\kappa$ for any $\kappa>d$. We will in fact only require a milder Diophantine condition.  Define the function $ \zeta:\bR^d \times\bR_{>0}\to\bN$ by
\eqlabel{zetadef}{\zeta(\bxi,T):=\min\set{N\in\bN: \displaystyle\min_{\substack{\bm\in\bZ^d\setminus\set{0}\\ 0<|\bm|\leq N}}|\bxi\cdot\bm|_\bZ\leq \frac{1}{T}}.}
In view of Dirichlet's pigeon hole principle, we have that $\zeta(\bxi,T)\leq T^{1/d}$ and, if $\bxi$ is of Diophantine type $\kappa\geq d$, then $\zeta(\bxi,T)> (C_\kappa T)^{\frac{1}{\kappa}}$.

%***CHECK
%(Apply pigeon principle for $$\set{\bxi\cdot\bm \,(\textrm{mod } 1): \bm\in\{0,1,\ldots,\lfloor T^{\frac{1}{d}}\rfloor\}^d}.$$
%There are at least $T$ points in the unit interval, so there exist two integral points $\bm_1,\bm_2\in\{1,\ldots,\lfloor T^{\frac{1}{d}}\rfloor\}^d$ with $|\bxi\cdot\bm_1-\bxi\cdot\bm_2|_{\bZ}\leq \frac{1}{T}$. Since $|\bm_1-\bm_2|\leq T^{\frac{1}{d}}$, we get $\zeta(\bxi,T)\leq T^{1/d}$.)***

We say $\bxi\in\bR^d$ is {\em $(\rho,\mu,\nu)$-vaguely} Diophantine, if
$$\displaystyle\sum_{l=1}^{\infty}l^\rho\, 2^{\mu}\zeta(\bxi,2^{l-1})^{-\nu}<\infty .$$

Thus, if $\bxi$ is Diophantine type $\kappa$, then it is also $(\rho,\mu,\nu)$-vaguely Diophantine for $\kappa\mu<\nu$. 
If $\bxi$ satisfies the weaker Brjuno Diophantine condition \cite{LDG19,BF19}, then it is $(\rho,0,\nu)$-vaguely Diophantine for $0\leq\rho<\nu-1$
(see Appendix \ref{sec: Brjuno}).

\begin{thm}\label{thm000}
Let $d\geq 2$ and $\bxi\in\bR^d\setminus\bQ^d$; furthermore if $d=2$ assume that $\bxi$ is $(0,0,2)$-vaguely Diophantine. Then the pair correlation function of the sequence $(\bupsilon_j)_{j=1}^\infty$ of directions is Poissonian.
\end{thm}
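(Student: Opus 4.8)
The plan is to follow the method of \cite{MS10} and \cite{EMV15}: realize $R^2_N(s)$ as an integral of a pair-counting function over a piece of an embedded horosphere in the space of affine lattices, invoke equidistribution of these horosphere pieces to identify the limit, and upgrade from convergence in law (already available from \cite{MS10}) to convergence of this first moment via a uniform escape-of-mass bound. Concretely, let $X=\ASL(d,\bZ)\backslash\ASL(d,\bR)$ be the space of affine lattices of covolume one with Haar probability measure $\mu_X$. The first $N$ directions $\bupsilon_j$ are those of the nonzero vectors of $\cL_\bxi$ of Euclidean length at most $T$, where $V_d T^d\sim N$ with $V_d$ the volume of the unit ball in $\bR^d$; zooming in near a point $\bv\in\bS^{d-1}$ to the scale $c_d N^{1/(d-1)}$ at which $R^2_N$ is measured is implemented by a rotation $R(\bv)\in\SO(d)$ carrying $\bv$ to a fixed axis followed by a diagonal $a(T)\in\SL(d,\bR)$ with one contracting and $d-1$ expanding entries. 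After a routine smoothing and sandwiching of the indicator of the small cap, one obtains a function $F_s\colon X\to[0,\infty]$, bounded above by a fixed multiple of the number of affine-lattice vectors lying in a fixed bounded box, such that
\[
R^2_N(s)=\int_{\bS^{d-1}}F_s\big(\cL_\bxi R(\bv)a(T)\big)\,\frac{d\bv}{V_{\bS^{d-1}}}+o(1)=\int_X F_s\,d\mu_N+o(1),
\]
where $\mu_N$ is the pushforward under $\bv\mapsto\cL_\bxi R(\bv)a(T)$ of the normalized surface measure on $\bS^{d-1}$. The curves $\bv\mapsto R(\bv)a(T)$ are expanding translates of an $\SO(d)$-orbit, that is, an \emph{embedded} non-linear horosphere; since $\bxi\notin\bQ^d$ the orbit of $\cL_\bxi$ is not confined to any proper homogeneous subvariety, so by Ratner's theorem (as used in \cite{MS10}) $\mu_N\to\mu_X$ weakly.

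Next, using the explicit Haar measure on $X$ together with the Siegel--Rogers integration formula for affine lattices --- where the generic irrational shift is exactly what forces the first two correlation functions of the limiting point process to agree with those of a Poisson process --- one evaluates $\int_X F_s\,d\mu_X=\pi^{(d-1)/2}s^{d-1}/\Gamma(\tfrac{d+1}{2})$, the right-hand side of \eqref{eqPPC}; this is the pair-correlation computation for the limiting process of \cite{MS10}. For the passage to the limit, put $F_s^{(L)}=\min(F_s,L)$: it is bounded and $\mu_X$-a.e.\ continuous, so $\int_X F_s^{(L)}\,d\mu_N\to\int_X F_s^{(L)}\,d\mu_X$ for each fixed $L$ by the weak convergence above, while $\int_X F_s^{(L)}\,d\mu_X\uparrow\int_X F_s\,d\mu_X$ as $L\to\infty$. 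Hence the theorem reduces to the uniform escape-of-mass estimate
\[
\lim_{L\to\infty}\ \limsup_{N\to\infty}\ \int_X\big(F_s-F_s^{(L)}\big)\,d\mu_N=0 .
\]

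This last display is the crux, and I expect it to be the main obstacle. Since $F_s$ is dominated by the number of affine-lattice vectors in a fixed bounded region, it amounts to showing that the averages $\mu_N$ charge the cuspidal part of $X$ --- affine lattices possessing many short vectors --- uniformly little in $N$, i.e.\ a quantitative non-divergence (Dani--Margulis type) statement for the \emph{embedded} $\SL(d,\bR)$-horosphere $\{\cL_\bxi R(\bv)a(T):\bv\in\bS^{d-1}\}$. A short vector of $\cL_\bxi R(\bv)a(T)$ forces $\bv$ and the length scale $T$ to lie near the data attached to a good rational approximant of $\bxi$, so the offending set of directions is controlled by a union bound over such approximants organized by dyadic length scales, and the total contribution is governed by a series whose convergence is precisely the $(\rho,\mu,\nu)$-vaguely Diophantine condition. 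In dimension $d\ge 3$ the exponents entering this bound are strictly favourable and the series converges for every irrational $\bxi$, which is why no Diophantine hypothesis is needed there; at $d=2$ that margin is absent, and one imposes the summability packaged in the $(0,0,2)$-vaguely Diophantine assumption. Granting this estimate, $R^2_N(s)=\int_X F_s\,d\mu_N+o(1)\to\int_X F_s\,d\mu_X=\pi^{(d-1)/2}s^{d-1}/\Gamma(\tfrac{d+1}{2})$, which is \eqref{eqPPC}.
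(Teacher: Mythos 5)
Your overall strategy is the one the paper follows: realize the pair correlation as a spherical average of a function on the space of affine lattices along an expanding $\SO(d)$-translate (an embedded horosphere), use the equidistribution from \cite{MS10} to identify the limit, compute the limit by the Siegel-type formula for $\Gamma'\backslash G'$, and upgrade convergence in law to convergence of the relevant moment via an escape-of-mass estimate whose proof is a dyadic decomposition of the cusp region filtered by the Diophantine quality of $\bxi$. The paper organizes this slightly differently --- it first proves convergence of all mixed moments of the one-point counting function $\cN_{c,T}(\sigma,\bupsilon)$ (Theorem~\ref{mainthm}), applies this to the second moment (Corollary~\ref{cor1}), and then derives the pair correlation $R^2_N$ from the second moment by the approximation Lemmas~\ref{pclem'} and~\ref{pclem2'}, rather than working directly with a pair-counting function $F_s$ on $X$, but this is merely a matter of bookkeeping.

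There is one genuine error in your write-up: you state that $F_s$ is ``bounded above by a fixed multiple of the number of affine-lattice vectors lying in a fixed bounded box.'' This is false, and if it were true your proof would not need any Diophantine hypothesis in any dimension: the one-point count $\cN$ satisfies $\cN^\eta$ with $\eta=1<d$ for every $d\ge 2$, which is case (A1)/(B1) in the paper, and the escape of mass follows outright. Since $F_s$ is a \emph{pair}-counting function, the correct domination is by $\cN^2$ (the \emph{square} of the number of lattice vectors in a bounded box), i.e.\ $\eta=2$. For $d\ge 3$ one has $\eta=2<d$ and (A1) applies, which is why no Diophantine condition is needed there; for $d=2$ one has $\eta=2=d$, which falls strictly outside (A1) and forces one into (A2) with the $(0,0,2)$-vaguely Diophantine hypothesis --- exactly the threshold behaviour you then go on to describe. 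As written, your argument is internally inconsistent: it both claims a first-moment domination (which needs no Diophantine input) and invokes a Diophantine obstruction at $d=2$. Replacing ``number of vectors'' by ``square of the number of vectors'' repairs the gap and aligns the proposal with the paper's Lemma~\ref{three-one} combined with \eqref{union} and Proposition~\ref{mainprop} at $\eta=2$.
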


We note that the hypothesis on $\bxi$ (in the case $d=2$) is satisfied for all Brjuno vectors, and thus in particular for Diophantine vectors of any type. In Appendix \ref{sec: counterexamples} we prove that there is a set of second Baire category of $\bxi\in\bR^2\setminus\bQ^2$ for which the pair correlation function diverges. This shows that the Diophantine condition is indeed required.

The pair correlation function also exists for $\bxi\in\bQ^d$ if $d\geq 3$ and is closely related to the two-point statistics of multi-dimensional Farey sequences \cite{BZ05,M13} and visible lattice points \cite{BCZ00,MS10}. The deeper reason why we see a Poisson pair correlation for $\bxi\notin\bQ^d$ is that the limit distribution is expressed through the Haar measure on the semi-direct product group $\SL(d,\bR) \ltimes \bR^d$, where the averages over double-lattice sums reduces to Siegel's mean value formula; cf.~Proposition \ref{Sievar} and \cite[Proposition 14]{EMV15}. In the case of $\bxi\in\bQ^d$, we need to apply Rogers' formulas which exhibit non-trivial correlations in the lattice sums, which explains the non-Poissonian correlations in this case. This is also the reason why three-point and higher-order correlation functions for directions in affine lattices with $\bxi\notin\bQ^d$ are non-Poissonian. Fine-scale statistics of directions have also been studied in the context of quasicrystals \cite{BGHJ14,MS15,H22}.

It is worth highlighting that in the analogous problem of directions in hyperbolic lattices, the pair correlation statistics are not Poissonian; see \cite{MV18} and references therein.

Theorem \ref{thm000} provides an example of a {\em deterministic} sequence in higher dimension whose pair correlation density is Poissonian. Other local statistics, however, deviate from the Poisson distribution in other statistical tests, as shown in \cite{MS10}. By {\em deterministic} we mean here that convergence is proved not just almost surely or in probability, but for a fixed, explicit sequence. An interesting non-Poisson random point process with Poissonian pair correlation is discussed in \cite{BS84}.

Theorem \ref{thm000} generalises results of \cite{EMV15} for two-dimensional affine lattices and under a stronger Diophantine condition on $\bxi$, as well as earlier work by Boca and Zaharescu \cite{BZ06} which was limited to almost every $\bxi\in\bR^2$. Other examples of deterministic sequences with Poissonian pair correlation in one dimension include $\sqrt n \bmod 1$ (excluding $n$ that are perfect squares) \cite{EM04,EMV15b} and the recent paper by Lutsko, Sourmelidis and Technau \cite{LST21} on $\alpha n^\theta\bmod 1$ which holds for every $\alpha>0$ and $\theta\leq 1/3$. Sequences such as $\alpha n^2\bmod 1$ \cite{RS98} or $\alpha2^n\bmod 1$ \cite{RZ99} have Poissonian pair correlation for almost every $\alpha$, but with no explicit instances of $\alpha$ currently known. For more references on recent developments on metric pair correlation problems, we refer the reader to \cite{AEM21,LS20b} and references therein.

Finally we mention work of Bourgain, Rudnick and Sarnak \cite{BRS16,BRS17}, who considered the fine-scale statistics of lattice points (without a shift) on large spheres, rather than radially projected points as in our setting. Remarkably, in dimension two, Kurlberg and Lester have recently been able to prove that all correlation functions converge to Poisson along density-one subsequences of eligible radii \cite{KL22}.

The next section will recall the convergence in distribution for the directions in affine lattices from \cite{MS10}, and then state an extension to convergence of mixed moments (Theorem \ref{mainthm}), which is the main result of this paper. An application of the Siegel mean value formula gives explicit expressions for all second-order statistics, and in particular shows that the pair correlation function is Poissonian (Corollaries \ref{cor1} and \ref{cor2}). These results thus immediately imply Theorem \ref{thm000}. Section \ref{secSpace} introduces the space of affine lattices. In Section \ref{secEscape} we prove escape-of-mass estimates for spherical averages that allow us to pass from convergence in distribution to convergence of moments. Sections \ref{mainlemma} and \ref{secProof} supply the proofs of our Main Lemma, which immediately implies Theorem \ref{mainthm}, and Corollaries \ref{cor1} and \ref{cor2}, respectively.

\subsection*{Acknowledgements} We thank Jo\~ ao Lopes Dias, Andreas Str\"ombergsson and the anonymous referee for helpful comments.

\section{Limit distribution and higher moments}\label{secLimit}

We consider the set $\cP_T$ of affine lattice points $y\in\cL_{\bxi}$ inside the ball of radius $\cB_T^d$ or, more generally, $\cP_{c,T}$ the lattice points in
the spherical shell
$$\cB_{c,T}^d:=\set{\mathbf{x}\in\bR^d: cT\leq \|\mathbf{x}\|\leq T},\quad 0\leq c<1.$$
The well known asymptotics for the number of lattice points in a large ball yields for $T\to \infty$,
$$
\#\cP_{c,T} \sim \operatorname{vol}_{\bR^d}(\cB_{c,1}^d) T^d = \frac{1-c^d}{d} V_{\bS^{d-1}} T^d .
$$

For $\sigma>0$ and $\bupsilon\in \bS^{d-1}$, we define $\mathfrak{D}_{c,T}(\sigma,\bupsilon)\subseteq \bS^{d-1}$ to be the open disc with center $\bupsilon$ and volume
$$\operatorname{vol}_{\bS^{d-1}}(\mathfrak{D}_{c,T}(\sigma,\bupsilon))=\frac{\sigma d}{1-c^d}T^{-d}.$$
Then the radius of $\mathfrak{D}_{c,T}(\sigma,\bupsilon)$ is $\asymp T^{-\frac{d}{d-1}}$, and for $T\to \infty$,
$$
\frac{\operatorname{vol}_{\bS^{d-1}}(\mathfrak{D}_{c,T}(\sigma,\bupsilon))}{V_{\bS^{d-1}}} \sim \frac{\sigma}{\#\cP_{c,T}}.
$$
Thus $\sigma$ measures the disc's volume in terms of the average density of points on the sphere; this scale is compatible with the one introduced above for the pair correlation function.

We define the counting function
$$\cN_{c,T}(\sigma, \bupsilon):=\#\set{\mathbf{y}\in\cP_{c,T}: \|\mathbf{y}\|^{-1} \mathbf{y}\in\mathfrak{D}_{c,T}(\sigma,\bupsilon)}$$
for the number of affine lattice points whose direction is contained in $\mathfrak{D}_{c,T}(\sigma,\bupsilon)$.
Note that on average over $\bupsilon$, uniform distribution implies (cf.~\cite[Section 2.3]{MS10}) that for any Borel probability measure $\lambda$ on $\bS^{d-1}$ with continuous density, we have
\eqlabel{mean}{
\lim_{T\to\infty} \int_{\bS^{d-1}} \cN_{c,T}(\sigma, \bupsilon) \lambda(d\bupsilon) =\sigma .}
This says that the expected number of affine lattice points, with direction contained in $\mathfrak{D}_{c,T}(\sigma,\bupsilon)$ for random $\bupsilon$, is $\sigma$.

We recall the following result from \cite{MS10}, which provides the full limit distribution of $\cN_{c,T}(\sigma, \bupsilon)$ with random $\bupsilon$ distributed according to a general Borel probability measure $\lambda$.

\begin{thm}\cite{MS10}\label{dist}
For $\underline{\sig}=(\sig_1,\ldots,\sig_m)\in\bR_{>0}^m$, there is a probability distribution $E_{c,\bxi}(\cdot, \underline{\sig})$ on $\bZ^m_{\geq 0}$ such that, for any $\underline{r}=(r_1,\ldots,r_m)\in\bZ^m_{\geq 0}$ and any Borel probability measure $\lambda$ on $\bS^{d-1}$, absolutely continuous with respect to Lebesgue,
$$\lim_{T\to\infty} \lambda({\bupsilon\in\bS^{d-1}: \cN_{c,T}(\sig_1,\bupsilon) = r_1,\ldots,\cN_{c,T}(\sig_m,\bupsilon) = r_m}) = E_{c,\bxi}(\underline{r},\underline{\sig}).$$
\end{thm}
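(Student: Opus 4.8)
The plan is to realize each joint counting function as the evaluation of a fixed function on the space of affine lattices along an expanding spherical orbit, and then to read off the limit law from equidistribution of that orbit. Set $G_1=\ASL(d,\bR)=\SL(d,\bR)\ltimes\bR^d$, acting on $\bR^d$ on the right by $\bx\cdot(M,\bw)=\bx M+\bw$, let $\Gamma_1=\ASL(d,\bZ)$, and let $X_1=\Gamma_1\backslash G_1$ be the space of covolume-one affine lattices with its $G_1$-invariant probability measure $\mu_1$; write $x_\bxi\in X_1$ for the point represented by $\cL_\bxi$. For $\bupsilon\in\bS^{d-1}$ choose $R(\bupsilon)\in\SO(d)$, measurable in $\bupsilon$, with $\bupsilon R(\bupsilon)=\be_d$, and put $a_T=\diag{T^{\frac1{d-1}},\ldots,T^{\frac1{d-1}},T^{-1}}\in\SL(d,\bR)$. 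Since $\mathfrak D_{c,T}(\sig,\bupsilon)$ has radius $\asymp T^{-\frac d{d-1}}$, the (volume-preserving) map $\bx\mapsto\bx\,R(\bupsilon)\,a_T$ carries the part of the shell $\set{cT\le\|\bx\|\le T}$ pointing in the directions $\mathfrak D_{c,T}(\sig,\bupsilon)$ onto a fixed truncated cone $\mathfrak C_c(\sig)\subset\bR^d$ of volume $\sig$ (solid angle proportional to $\sig$, over the radial interval $[c,1]$), and hence identifies the affine lattice points counted by $\cN_{c,T}(\sig,\bupsilon)$ with $\Lambda_{T,\bupsilon}\cap\mathfrak C_c(\sig)$, where $\Lambda_{T,\bupsilon}:=x_\bxi\,R(\bupsilon)\,a_T$. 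Thus $\cN_{c,T}(\sig_i,\bupsilon)=F_{\sig_i}(\Lambda_{T,\bupsilon})$ with the fixed function $F_\sig(\Lambda):=\#(\Lambda\cap\mathfrak C_c(\sig))$ on $X_1$.

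Next I would invoke equidistribution. The pushforward $\nu_T$ of $\lambda$ under $\bupsilon\mapsto\Lambda_{T,\bupsilon}$ is a probability measure on $X_1$ carried by the $a_T$-translate of the $(d-1)$-dimensional submanifold $\set{x_\bxi R(\bupsilon):\bupsilon\in\bS^{d-1}}$; as $a_T$ expands the transverse coordinates, this is an expanding horospherical-type orbit. By the equidistribution of such translates in the space of affine lattices --- Ratner's measure classification together with a non-divergence argument, as carried out in \cite{MS10} --- $\nu_T$ converges weakly, as $T\to\infty$, to a homogeneous probability measure $\mu_\bxi$ on $X_1$, which equals $\mu_1$ when $\bxi\notin\bQ^d$ and is supported on a proper finite-volume homogeneous subspace when $\bxi\in\bQ^d$. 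One then \emph{defines}
$$E_{c,\bxi}(\underline r,\underline\sig):=\mu_\bxi\pa{\set{\Lambda\in X_1:F_{\sig_1}(\Lambda)=r_1,\ \ldots,\ F_{\sig_m}(\Lambda)=r_m}}.$$

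It remains to upgrade weak convergence to $\nu_T(A_{\underline r})\to\mu_\bxi(A_{\underline r})$ for $A_{\underline r}=\set{F_{\sig_i}=r_i\ \forall i}$. The function $F_\sig$ is integer-valued, hence discontinuous, and unbounded. Its discontinuity set lies inside $\set{\Lambda:\Lambda\cap\partial\mathfrak C_c(\sig)\neq\emptyset}$, which is $\mu_\bxi$-null (each positive-codimension piece has measure zero, by the Siegel--Rogers mean value formulas), so $\mathbf{1}_{A_{\underline r}}$ is $\mu_\bxi$-a.e.\ continuous; unboundedness I would handle by truncation, noting that $\min(F_\sig,M)$ is bounded and a.e.\ continuous, so that $\int\min(F_{\sig_i},M)\,d\nu_T\to\int\min(F_{\sig_i},M)\,d\mu_\bxi$. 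Granting the uniform tightness bound
$$\lim_{M\to\infty}\ \sup_{T\ge T_0}\ \nu_T\pa{\set{\Lambda:F_\sig(\Lambda)>M}}=0,$$
one lets $M\to\infty$ to conclude; this also yields $\sum_{\underline r}E_{c,\bxi}(\underline r,\underline\sig)=1$, so that $E_{c,\bxi}(\,\cdot\,,\underline\sig)$ is a probability distribution on $\bZ^m_{\ge0}$.

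I expect this last tightness estimate to be the only substantial step. It amounts to bounding, uniformly in $T$, the $\lambda$-measure of the directions $\bupsilon$ along which $\Lambda_{T,\bupsilon}$ has many points in the bounded region $\mathfrak C_c(\sig)$ --- equivalently, along which the shifted lattice $\cL_\bxi$ has an unusual number of vectors of length $\asymp T$ lying within transverse distance $\lesssim T^{-1/(d-1)}$ of the ray $\bR_{\ge0}\bupsilon$ --- that is, a quantitative non-divergence statement for the spherical average near the cusp of $X_1$. For $d\ge3$ this holds for every irrational $\bxi$, whereas for $d=2$ it is precisely where a Diophantine hypothesis on $\bxi$ must enter. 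It is this estimate, in the sharpened form that additionally controls high moments, that constitutes the technical core of the present paper (Section~\ref{secEscape}) and upgrades ``no escape of mass'' --- enough for the convergence in distribution above --- to convergence of all mixed moments.
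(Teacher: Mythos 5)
Your high-level plan is exactly right and matches the argument in \cite{MS10}: embed the counting into a function $F_\sigma$ on the space of affine lattices $\Gamma'\backslash G'$, express $\cN_{c,T}(\sigma,\bupsilon)$ via the translated spherical orbit $\bupsilon\mapsto\Gamma'(1,\bxi)M_0k(\bupsilon)\Phi_t$, invoke Ratner-type equidistribution of that expanding translate toward a homogeneous probability measure $\mu_\bxi$, and define $E_{c,\bxi}(\underline r,\underline\sigma)$ as the $\mu_\bxi$-measure of the joint level set. Two caveats are in order, one technical and one substantive.

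The technical one: the identification $\cN_{c,T}(\sigma,\bupsilon)=F_\sigma(\Lambda_{T,\bupsilon})$ is not an exact equality. The set $\set{\bx:\ cT\le\|\bx\|\le T,\ \|\bx\|^{-1}\bx\in\mathfrak D_{c,T}(\sigma,\bupsilon)}$ mapped by $R(\bupsilon)a_T$ is only asymptotic to a fixed truncated cone, not equal to it; one needs the sandwich $\cN_{c,T}(\sigma,\bupsilon)\le\cN\big((1,\bxi)M_0k(\bupsilon)\Phi_t,\mathfrak C_0(\sigma+\eps)\big)$ (and a companion lower bound with $\sigma-\eps$), exactly as in \eqref{geoest+}, and then let $\eps\to0$ at the end. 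This is routine but should not be elided.

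The substantive issue: your closing paragraph is wrong about where Diophantine conditions enter, and it misidentifies what is hard here. Theorem~\ref{dist} holds for \emph{every} $\bxi\in\bR^d$ and every $d\ge 2$, with no Diophantine hypothesis whatsoever --- look at its statement. For convergence in distribution you only need to test against indicators of level sets $A_{\underline r}=\set{F_{\sigma_1}=r_1,\dots,F_{\sigma_m}=r_m}$, and these indicators are already bounded by $1$; once the weak limit $\nu_T\to\mu_\bxi$ as probability measures is known (which is the content of \cite{MS10}, via Ratner plus a Dani--Margulis-type non-divergence argument, again with no Diophantine input), convergence $\nu_T(A_{\underline r})\to\mu_\bxi(A_{\underline r})$ follows from the fact that $\partial A_{\underline r}\subseteq\bigcup_j\set{\Lambda:\Lambda\cap\partial\mathfrak C_c(\sigma_j)\ne\emptyset}$ is $\mu_\bxi$-null. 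Your auxiliary truncation $\min(F_\sigma,M)$ is unnecessary for this purpose, and the ``uniform tightness bound'' you isolate is not needed to prove Theorem~\ref{dist} at all. It, or rather its sharpened moment version, is precisely what is needed for Theorem~\ref{mainthm} (convergence of mixed moments), and \emph{that} is where hypotheses (A1)/(A2) and the vague Diophantine conditions enter --- in dimension $d\ge3$ as well as $d=2$, contrary to your claim that ``for $d\ge3$ this holds for every irrational $\bxi$.'' You have, in effect, transplanted the difficulty of the present paper's main theorem back into the statement being proved, which is a previously known and unconditional result.
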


The limit distribution satisfies the following properties, { cf.~Section \ref{sec:properties}:}

\begin{enumerate}
\item[(a)] $E_{c,\bxi}(\underline{r},\underline{\sig})$ is independent of $\lambda$ and $\cL$.
\item[(b)] $\displaystyle\sum_{\underline{r}\in\bZ_{\ge0}^m}r_jE_{c,\bxi}(\underline{r},\underline{\sig})=\displaystyle\sum_{r=0}^{\infty}rE_{c,\bxi}(r,\sig_j)=\sig_j$ for any $j\leq m$.
\item[(c)] For $\bxi\in\bQ^d$, $\sum_{\underline{r}\in\bZ_{\ge 0}^m} \|\underline{r}\|^s E_{c,\bxi}(\underline{r},\underline{\sig}) < \infty$ for $0\le s<d$, and $=\infty$ for $s\ge d$. 
\item[(d)] For $\bxi\notin\bQ^d$, $E_{c,\bxi}(\underline{r},\underline{\sig})=:E_{c}(\underline{r},\underline{\sig})$ is independent of $\bxi$.
\item[(e)] For $\bxi\notin\bQ^d$, $\sum_{\underline{r}\in\bZ_{\ge 0}^m} \|\underline{r}\|^s E_{c}(\underline{r},\underline{\sig}) < \infty$ for $0\le s<d+1$, and $=\infty$ for $s\ge d+1$. 
\end{enumerate}

A key ingredient of the proof of Theorem \ref{dist} is Ratner's measure classification theorem, which allows one to prove equidistribution of horospheres embedded in the space of affine lattices. An effective version of this statement was established only recently \cite{K21}.

Let us now turn to the main outcome of the present investigation, which extends the results of \cite{EMV15} to arbitrary dimension. For $\sig_1,\ldots,\sig_m>0$, $\lambda$ a Borel probability measure on $\bS^{d-1}$, and $\underline{z}=(z_1,\ldots,z_m)\in\bC^m$ let
\eqlabel{mixdef}{\bM_\lambda(T,\underline{z}):=\int_{\bS^{d-1}}(\cN_{c,T}(\sig_1,\bupsilon)+1)^{z_1}\cdots(\cN_{c,T}(\sig_m,\bupsilon)+1)^{z_m}\lambda(d\bupsilon).}
We denote the positive real part of $z\in\bC$ by $\operatorname{Re}_+(z):=\max\set{\operatorname{Re}(z),0}$.

The following is the principal theorem of this paper.

\begin{thm}\label{mainthm}
Let $\sig_1,\ldots,\sig_m>0$, and $\lambda$ a Borel probability measure on $\bS^{d-1}$ with continuous density. Choose $\bxi\in\bR^d$ and $\underline{z}=(z_1,\ldots,z_m)\in\bC^m$, such that one of the following hypotheses holds:
\begin{enumerate}
\item[{\rm (A1)}] $\operatorname{Re}_+(z_1)+\cdots+\operatorname{Re}_+(z_m)<d.$
\item[{\rm (A2)}]  $\eta:=\operatorname{Re}_+(z_1)+\cdots+\operatorname{Re}_+(z_m)<d+1$ and $\bxi$ is $(0,\eta-2,2)$-vaguely Diophantine if $d=2$ and $(d-1,\eta-d,1)$-vaguely Diophantine if $d\geq 3$.
\end{enumerate}
Then
\eqlabel{mixconv}{\lim_{T\to\infty}\bM_\lambda(T,\underline{z})=\displaystyle\sum_{\underline{r}\in\bZ_{\ge0}^m}(r_1+1)^{z_1}\cdots(r_m+1)^{z_m}E_{c,\bxi}(\underline{r},\underline{\sig}).}
\end{thm}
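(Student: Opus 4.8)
The plan is to upgrade the convergence in distribution in Theorem \ref{dist} to convergence of (mixed) moments by the standard uniform-integrability route: given that $\cN_{c,T}(\sig_j,\bupsilon)\to$ (a random variable with law) $E_{c,\bxi}(\cdot,\underline\sig)$ in distribution under $\lambda$, it suffices to show that the family of random variables $\prod_j (\cN_{c,T}(\sig_j,\bupsilon)+1)^{\operatorname{Re}(z_j)}$ (more precisely, their absolute values, dominated by $\prod_j (\cN_{c,T}(\sig_j,\bupsilon)+1)^{\operatorname{Re}_+(z_j)}$) is uniformly integrable as $T\to\infty$. By Hölder's inequality with exponents $p_j = \eta/\operatorname{Re}_+(z_j)$ (where $\eta = \sum_j \operatorname{Re}_+(z_j)$), this reduces to a single-disc tail bound: it is enough to prove that
\[
\sup_{T\geq 1}\ \int_{\bS^{d-1}} \cN_{c,T}(\sigma,\bupsilon)^{\eta'}\,\lambda(d\bupsilon) < \infty
\]
for some $\eta' > \eta$ with $\eta' < d$ under hypothesis (A1), and $\eta' < d+1$ under hypothesis (A2); a bound on a moment of order strictly larger than $\eta$ gives uniform integrability of the order-$\eta$ family. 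Together with the finiteness of the limiting moments — which is precisely properties (c) and (e) of the limit distribution — this yields \eqref{mixconv}.

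The heart of the matter is therefore the escape-of-mass estimate promised in Section \ref{secEscape}: a uniform-in-$T$ bound for $\int_{\bS^{d-1}} \cN_{c,T}(\sigma,\bupsilon)^{s}\,\lambda(d\bupsilon)$ for $s$ up to (but not including) the critical exponent. I would first pass to the dynamical picture: the counting function $\cN_{c,T}$ is, up to the usual linearization, a function on the space of affine lattices $X = \SL(d,\bR)\ltimes\bR^d / \SL(d,\bZ)\ltimes\bZ^d$, and the average over $\bupsilon\in\bS^{d-1}$ against $\lambda$ becomes an average of this function over a piece of an expanding horosphere $\{n_-(\bx)\,a_T : \bx\in U\}$ (embedded via the $\SL(d,\bR)$-action, with the shift $\bxi$ entering through the $\bR^d$-coordinate), as $T\to\infty$. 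Concretely $\cN_{c,T}(\sigma,\bupsilon)$ counts lattice points of the \emph{pushed} affine lattice falling in a fixed shrinking cylinder, i.e.\ it is essentially $F(g_T(\bupsilon))$ for a fixed (unbounded, but explicitly controlled) Siegel-type function $F$ on $X$, and the $s$-th moment is $\int_U F(g_T(\bx))^s\,d\bx$. One then needs: (i) a pointwise bound $F(\Lambda)^s \lesssim \alpha_d(\Lambda)^{\beta}$ relating high powers of the number of short vectors to the Schmidt–Siegel height function $\alpha_d$ (the supremum of $1/\mathrm{covol}$ over rational subspaces), with an exponent $\beta$ that is admissible precisely in the stated ranges $s<d$ resp.\ $s<d+1$; and (ii) a \emph{non-divergence} / quantitative-recurrence estimate for $\alpha_d^\beta$ averaged over the expanding horosphere, uniformly in $T$.

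The Diophantine hypothesis on $\bxi$ enters exactly at step (ii): for $\bxi\notin\bQ^d$ generic the horosphere equidistributes and $\alpha_d$ has finite integral against the limit, giving the $s<d+1$ range for free; but for \emph{well-approximable} $\bxi$ the embedded horosphere can spend anomalously long excursions high in the cusp at finite $T$, and the quantitative Diophantine input (through the function $\zeta(\bxi,T)$ in the definition of $(\rho,\mu,\nu)$-vaguely Diophantine) is what bounds the contribution of these excursions — one decomposes the time/scale axis dyadically (the sum $\sum_l l^\rho 2^\mu \zeta(\bxi,2^{l-1})^{-\nu}$ is literally the sum of the excursion contributions at scale $2^l$), and convergence of that series gives the uniform bound. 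This is the step I expect to be the main obstacle, and where the Main Lemma of Section \ref{mainlemma} does the real work; the reduction to it via Hölder and uniform integrability, plus invoking properties (c)/(e) for the limit side, is routine. For $d=2$ one cannot reach $d+1=3$ on the nose (hence the extra $(0,\eta-2,2)$-vague Diophantine condition even to get $\eta<3$, matching the hypothesis of \cite{EMV15}), while for $d\geq 3$ the milder $(d-1,\eta-d,1)$-condition suffices, reflecting the extra room in higher dimension.
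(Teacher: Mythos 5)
Your high-level plan --- pass to the space of affine lattices, bound the counting function pointwise by a Siegel-type height, and let the Diophantine hypothesis control cusp excursions of the embedded horosphere --- is the same as the paper's, and the UI-route is morally the right one. But two specific steps would not go through as you describe them.

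\smallskip
\emph{(1) The higher-moment criterion for uniform integrability is too greedy under (A2).}
You propose to establish uniform integrability of $(\cN_{c,T}+1)^\eta$ by bounding $\sup_T\int_{\bS^{d-1}}\cN_{c,T}(\sigma,\bupsilon)^{\eta'}\,\lambda(d\bupsilon)$ for some $\eta'>\eta$. Under (A1) this is fine: there is room $\eta<\eta'<d$ and no Diophantine input is needed. But under (A2) the hypothesis is $(d-1,\eta-d,1)$-vague Diophantineness (or $(0,\eta-2,2)$ for $d=2$), calibrated precisely at the exponent $\eta$: after correcting what is clearly a misprint in the paper's definition (the factor should be $2^{\mu l}$, not $2^\mu$, as one sees from how the condition is actually used in \eqref{impo1}), the required condition at $\eta'$ carries an extra factor $2^{(\eta'-\eta)l}$ and is \emph{strictly stronger}; there are $\bxi$ for which the series converges at $\eta$ but diverges for every $\eta'>\eta$ (e.g.\ $\zeta(\bxi,2^{l-1})\asymp 2^{(d-\eta)l}\,l^{d+1}$). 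So you cannot afford any slack in the exponent. The paper circumvents this by proving the tail estimate \eqref{mixdiff} \emph{directly at exponent} $\eta$: the function $F_{R,\eta,r}$ of \eqref{Fetadef} carries the cutoff $\chi_{[R,\infty)}(\prod v_i)$ built into it, and Proposition~\ref{mainprop} is the statement $\lim_R\limsup_t\int F_{R,\eta,r}=0$, which is exactly uniform integrability of the order-$\eta$ family without invoking a higher moment.

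\smallskip
\emph{(2) The height function must see the affine shift.}
Your pointwise bound $F(\Lambda)^s\lesssim\alpha_d(\Lambda)^\beta$ in terms of the Schmidt--Siegel $\alpha_d$ is a function on $\Gamma\backslash G$; it only sees the underlying lattice and is blind to the affine component. That can reach $s<d$, but not $s<d+1$. The extra unit of exponent for $\bxi\notin\bQ^d$ comes from the factors $\chi_{[-c_dr,c_dr]}(v_i(g)b_i(g))$ in \eqref{Fetadef}: when the lattice is high in the cusp the affine coordinate $b_i$ must be exceptionally close to $0$, which for irrational $\bxi$ is a Diophantine event. Lemma~\ref{three-one} is where this is extracted, and it lives on $\Gamma'\backslash G'$. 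A lattice-only height cannot distinguish $\bxi\in\bQ^d$ (where property~(e) fails, and the critical exponent really is $d$) from $\bxi\notin\bQ^d$; so your step~(i) must be replaced by a genuinely affine bound of the type of Lemma~\ref{three-one}.

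\smallskip
Lesser points: your Hölder reduction to a single-disc moment is a legitimate alternative to the paper's elementary bound \eqref{union}, and invoking properties (c)/(e) for the limit side is harmless but redundant once you have genuine uniform integrability (Fatou then gives finiteness of the limit moments for free).
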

 
We note that if $\bxi$ is Diophantine of type $\kappa$ then under (A2) we have $\eta<2+\frac2\kappa$ if $d=2$ and $\eta<d+\frac1\kappa$ if $d\geq 3$. Thus in particular for badly approximable $\bxi$ (where $\kappa=d$) we have $\eta<3$ if $d=2$ and $\eta<d+\frac{1}{d}$ if $d\geq 3$.
 
%We remark that for any $\kappa\ge 1+\frac{1}{d}$, the condition $\displaystyle\sum_{l=1}^{\infty}l^{d-1}\zeta(\bxi,2^{l-1})^{-1}<\infty$ is milder than the condition that $\bxi$ is Diophantine of type $\kappa$. This fact directly follows from \eqref{Diotype}.

We define the restricted moments to explain the key step of the proof of Theorem \ref{mainthm}:
\eqlabel{resmix}{\bM_\lambda^{(K)}(T,\underline{z}):=\int_{\max_j \cN_{c,T}(\sig_j,\bupsilon)\leq K}(\cN_{c,T}(\sig_1,\bupsilon)+1)^{z_1}\cdots(\cN_{c,T}(\sig_m,\bupsilon)+1)^{z_m}\lambda(d\bupsilon).}
Then Theorem \ref{dist} implies that, for any $K>0$,
\eqlabel{resmixconv}{\lim_{T\to\infty}\bM_\lambda^{(K)}(T,\underline{z})=\displaystyle\sum_{\underline{r}\in\bZ_{\ge0}^m, |\underline{r}|\leq K}(r_1+1)^{z_1}\cdots(r_m+1)^{z_m}E_{c,\bxi}(\underline{r},\underline{\sig}),}
where $|\underline{r}|:=\displaystyle\max_{1\leq j\leq m}r_m$. Thus, for the proof of Theorem \ref{mainthm} it remains to show that under (A1)--(A2),
\eqlabel{mixdiff}{\lim_{K\to\infty}\limsup_{T\to\infty}\left|\bM_\lambda(T,\underline{z})-\bM_\lambda^{(K)}(T,\underline{z})\right|=0.}
We will prove this in Section \ref{mainlemma}.

The following corollaries of Theorem \ref{mainthm} state that in particular the second moment and pair correlation converge and are Poisonnian.

\begin{cor}\label{cor1}
Let $\lambda$ be as in Theorem \ref{mainthm}. Let $d\geq 2$ and $\bxi\in\bR^d\setminus\bQ^d$; furthermore if $d=2$ assume that $\bxi$ is $(0,0,2)$-vaguely Diophantine.  Then, for any $\sig_1,\sig_2>0$, 
\eqlabel{mixmom}{\lim_{T\to\infty}\int_{\bS^{d-1}}\cN_{c,T}(\sig_1,\bupsilon)\cN_{c,T}(\sig_2,\bupsilon)\lambda(d\bupsilon)=\sig_1\sig_2+\min\set{\sig_1,\sig_2}.}
\end{cor}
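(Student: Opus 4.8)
The plan is to apply Theorem \ref{mainthm} with $m=2$ and $\underline z=(1,1)$, turn the resulting limit of shifted moments into the plain mixed second moment, and then evaluate the answer by the Siegel mean value formula on the space of affine lattices. With $z_1=z_2=1$ one has $\operatorname{Re}_+(z_1)+\operatorname{Re}_+(z_2)=2$; for $d\ge 3$ this is $<d$, so hypothesis (A1) holds with no condition on $\bxi$, while for $d=2$ it equals $d$ but is still $<d+1=3$, so hypothesis (A2) applies with $\eta=2$, the required ``$(0,\eta-2,2)$-vaguely Diophantine'' condition being precisely the ``$(0,0,2)$-vaguely Diophantine'' hypothesis imposed in the corollary when $d=2$. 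Thus Theorem \ref{mainthm} gives
$$\lim_{T\to\infty}\int_{\bS^{d-1}}\big(\cN_{c,T}(\sig_1,\bupsilon)+1\big)\big(\cN_{c,T}(\sig_2,\bupsilon)+1\big)\,\lambda(d\bupsilon)=\sum_{\underline r\in\bZ_{\ge0}^2}(r_1+1)(r_2+1)\,E_{c,\bxi}(\underline r,\underline\sig).$$

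Next I would pass to the plain second moment. For each fixed $T$ the function $\cN_{c,T}(\sig_j,\cdot)$ is bounded (by $\#\cP_{c,T}$), so one may write $\cN_{c,T}(\sig_1,\bupsilon)\cN_{c,T}(\sig_2,\bupsilon)=\big(\cN_{c,T}(\sig_1,\bupsilon)+1\big)\big(\cN_{c,T}(\sig_2,\bupsilon)+1\big)-\cN_{c,T}(\sig_1,\bupsilon)-\cN_{c,T}(\sig_2,\bupsilon)-1$, integrate against $\lambda$, and let $T\to\infty$: the product term converges by the display above, and each of the two linear terms converges to $\sig_j$ by \eqref{mean}. Hence the left-hand side of \eqref{mixmom} converges to $\sum_{\underline r}(r_1+1)(r_2+1)E_{c,\bxi}(\underline r,\underline\sig)-\sig_1-\sig_2-1$. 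Expanding $(r_1+1)(r_2+1)=r_1r_2+r_1+r_2+1$ and using property (b) of the limit distribution together with $\sum_{\underline r}E_{c,\bxi}(\underline r,\underline\sig)=1$ — all these sums being absolutely convergent by property (e), since $2<d+1$ — this collapses to $\sum_{\underline r}r_1r_2\,E_{c,\bxi}(\underline r,\underline\sig)$.

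It then remains to evaluate this last sum. I would invoke the description of $E_{c,\bxi}$ from \cite{MS10}: for $\bxi\notin\bQ^d$ it is the joint law of $\big(\#(\Lambda\cap\mathfrak R_{\sig_1}),\,\#(\Lambda\cap\mathfrak R_{\sig_2})\big)$, where $\Lambda$ ranges over the space of affine lattices of covolume one with its Haar probability measure and $\mathfrak R_\sig\subset\bR^d$ is the fixed region obtained as the rescaled limit of the cone-shell $\{y:cT\le\|y\|\le T,\ \|y\|^{-1}y\in\mathfrak D_{c,T}(\sig,\bupsilon)\}$. That cone-shell has Euclidean volume exactly $\sig$ for every $T$, and because the discs $\mathfrak D_{c,T}(\sig,\bupsilon)$ are concentric the regions for different $\sig$ are nested; consequently $\operatorname{vol}_{\bR^d}(\mathfrak R_\sig)=\sig$ and $\operatorname{vol}_{\bR^d}(\mathfrak R_{\sig_1}\cap\mathfrak R_{\sig_2})=\min\{\sig_1,\sig_2\}$. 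Plugging $f=\chi_{\mathfrak R_{\sig_1}}$ and $g=\chi_{\mathfrak R_{\sig_2}}$ into the second-moment form of Siegel's mean value formula on the affine lattice space (Proposition \ref{Sievar}; cf.~\cite[Proposition 14]{EMV15}), namely $\int\widehat f\,\widehat g\,d\mu=\int_{\bR^d}f\int_{\bR^d}g+\int_{\bR^d}fg$, then gives $\sum_{\underline r}r_1r_2\,E_{c,\bxi}(\underline r,\underline\sig)=\sig_1\sig_2+\min\{\sig_1,\sig_2\}$, which is \eqref{mixmom}.

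Granting Theorem \ref{mainthm}, the only step that really needs attention is the rearrangement in the second paragraph, which hinges on the finiteness of the second moment of $E_{c,\bxi}$, i.e.\ property (e) — itself ultimately a consequence of the escape-of-mass estimate behind Theorem \ref{mainthm}. The identification of the limit distribution with a push-forward of Haar measure and the Siegel computation in the last step are standard (as in \cite{EMV15}), so I would not expect a serious obstacle there; the genuine difficulty of this circle of ideas lies entirely in Theorem \ref{mainthm}.
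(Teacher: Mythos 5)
Your proof is correct and follows essentially the same route as the paper: apply Theorem \ref{mainthm} with $\underline z=(1,1)$ (noting (A1) for $d\ge3$ and (A2) with $\eta=2$ for $d=2$), reduce from the shifted moment $\sum(r_1+1)(r_2+1)E_{c}$ to $\sum r_1r_2E_{c}$ using property (b) and $\sum E_c=1$, and then evaluate $\sum r_1r_2E_c=\sig_1\sig_2+\min\{\sig_1,\sig_2\}$ by splitting Siegel's second-moment identity into its off-diagonal part (Proposition \ref{Sievar}) and diagonal part (first-moment Siegel, using that $\mathfrak C_c(\sig_1)\cap\mathfrak C_c(\sig_2)=\mathfrak C_c(\min\{\sig_1,\sig_2\})$ has volume $\min\{\sig_1,\sig_2\}$). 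You are simply more explicit than the paper about the intermediate bookkeeping between $(\cN+1)$-moments and $\cN$-moments; the paper compresses this into one sentence invoking property (b) and \eqref{mixexp}.
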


Let $N=N_c(T)$ be the number of points in $\cP_{c,T}$, and let
$\bupsilon_{j}=\|\by_j\|^{-1}\by_j\in\bS^{d-1}$ be the directions of the vectors $\by_j\in\cP_{c,T}$, with $j=1,\ldots,N_c(T)$. For $f\in \operatorname{C}_0(\bS^{d-1}\times\bS^{d-1}\times\bR)$ (continuous, real-valued, and with compact support), we define the two-point correlation function
\eqlabel{tcordef}{R_N^2(f)=\frac{1}{N}\displaystyle\sum_{\substack{j_1,j_2=1\\ j_1\neq j_2}}^N f(\bupsilon_{j_1},\bupsilon_{j_2}, c_d N^{\frac{1}{d-1}}\bd_{\bS^{d-1}}(\bupsilon_{j_1},\bupsilon_{j_2})).}

\begin{cor}\label{cor2}
Let $d\geq 2$, $0\leq c<1$ and $\bxi\in\bR^d\setminus\bQ^d$; furthermore if $d=2$ assume that $\bxi$ is $(0,0,2)$-vaguely Diophantine.  Then for any $f\in \operatorname{C}_0(\bS^{d-1}\times\bS^{d-1}\times\bR)$
\eqlabel{tcor}{\lim_{T\to\infty}R_{N_c(T)}^2(f)= \frac{V_{\bS^{d-2}}}{V_{\bS^{d-1}}} \int_{\bS^{d-1}\times\bR_{\geq 0}}f(\bupsilon,\bupsilon,s)\, d\bupsilon \, s^{d-2} ds .}
\end{cor}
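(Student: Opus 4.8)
The plan is to deduce the convergence of the two-point correlation function from the convergence of mixed moments in Theorem \ref{mainthm} (via Corollary \ref{cor1}), following the strategy of \cite[Section 4]{MS10} and \cite{EMV15}. The starting point is to rewrite $R_N^2(f)$ as an integral over the sphere against the counting functions $\cN_{c,T}$. First I would discretize: partition $\bS^{d-1}$ into small discs $\mathfrak{D}_{c,T}(\sigma,\bupsilon)$ of the type appearing in Section \ref{secLimit}, each carrying volume $\asymp \sigma T^{-d}$, indexed so that the scaled spherical distance $c_d N^{1/(d-1)}\bd_{\bS^{d-1}}$ between the center $\bupsilon$ and a nearby direction is approximately constant on each disc. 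Since $N = N_c(T) \sim \frac{1-c^d}{d}V_{\bS^{d-1}}T^d$, the pair sum defining $R_N^2(f)$ can then be approximated by
\eqlabel{step1}{R_N^2(f)\approx \frac1N\int_{\bS^{d-1}}\sum_{\substack{\by\in\cP_{c,T}\\ \by/\|\by\|\neq\bupsilon}} f\bigl(\bupsilon,\bupsilon, c_d N^{\frac1{d-1}}\bd_{\bS^{d-1}}(\bupsilon,\by/\|\by\|)\bigr)\,\frac{\#\{\bz\in\cP_{c,T}:\bz/\|\bz\|\in\mathfrak D\}}{\sigma}\,\lambda(d\bupsilon),}
using that on a disc of $\sigma$-volume the number of directions it contains is, after integration against $\lambda$, essentially the value of $\cN_{c,T}$ there. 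Making this precise is the first technical step: one groups the outer sum $j_1$ according to which small disc $\bupsilon_{j_1}$ lies in, and replaces $f$ by its value at the disc center, controlling the error via uniform continuity of $f$ and its compact support (only $O(1)$ discs contribute for each fixed scale).

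The second step is to recognize that the right-hand side of \eqref{step1}, after a further dyadic or Riemann-sum decomposition in the distance variable $s$, is a finite linear combination of quantities of the form
\eq{\int_{\bS^{d-1}}\cN_{c,T}(\sigma_1,\bupsilon)\,\cN_{c,T}(\sigma_2,\bupsilon)\,\lambda(d\bupsilon)}
for suitable $\sigma_1,\sigma_2 > 0$ determined by the chosen annuli in the $s$-variable (together with lower-order terms involving single factors $\cN_{c,T}(\sigma,\bupsilon)$, handled by \eqref{mean}). Here I use that the directions $\by/\|\by\|$ falling at scaled distance in $[a,b)$ from $\bupsilon$ are exactly those in the annular region between two discs $\mathfrak D_{c,T}(\sigma(b),\bupsilon)$ and $\mathfrak D_{c,T}(\sigma(a),\bupsilon)$, so counting them is a difference $\cN_{c,T}(\sigma(b),\bupsilon)-\cN_{c,T}(\sigma(a),\bupsilon)$; multiplying by the count in the same disc and integrating produces the second mixed moment. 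Corollary \ref{cor1} then gives the limit $\sigma_1\sigma_2+\min\{\sigma_1,\sigma_2\}$ as $T\to\infty$. The $\sigma_1\sigma_2$ term corresponds to the ``diagonal product'' which, upon summing over all the annuli in the $s$-variable, reassembles into the product of two independent Lebesgue integrals — this contributes the self-pairing and vanishes in the scaling since it is $O(1/N)$ relative to the normalization; the genuinely surviving term is $\min\{\sigma_1,\sigma_2\}$, which encodes that the second point lies within the same disc, and summing its contribution over the annular decomposition and passing to the Riemann integral yields exactly the right-hand side of \eqref{tcor}, with the constant $V_{\bS^{d-2}}/V_{\bS^{d-1}}$ and the factor $s^{d-2}\,ds$ arising from the $(d-2)$-dimensional volume of the sphere of radius $s$ in the tangent space at $\bupsilon$ (the Jacobian of the exponential map at small scales).

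The main obstacle I anticipate is not any single estimate but the bookkeeping needed to interchange the three limiting operations: the mesh of the disc partition, the fineness of the $s$-annuli, and $T\to\infty$. One must show that the errors in \eqref{step1} — from replacing $f$ by step functions in $s$ and from the non-exact relationship between ``number of directions in a disc'' and $\cN_{c,T}$ — are uniformly small as $T\to\infty$ before the partition is refined. This requires an a priori bound on $\frac1N\sum_{j_1\neq j_2}\mathbf 1[\bd_{\bS^{d-1}}(\bupsilon_{j_1},\bupsilon_{j_2})\le s_0/(c_dN^{1/(d-1)})]$ that is uniform in $T$, i.e.\ a uniform upper bound on $R_N^2$ for the indicator of a fixed ball; this follows from the $L^2$ control, namely the convergence of $\int \cN_{c,T}(\sigma,\bupsilon)^2\lambda(d\bupsilon)$ to $\sigma^2+\sigma$ supplied by Corollary \ref{cor1} (case $m=1$, $\sigma_1=\sigma_2=\sigma$), together with Cauchy--Schwarz to absorb the off-scale contributions. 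With that uniform second-moment bound in hand, a standard $\varepsilon/3$ argument closes the interchange of limits, and the computation of the explicit constant is then a routine evaluation of the volume asymptotics and the tangent-space Jacobian. The Diophantine hypothesis enters only through its role in Corollary \ref{cor1}, so no new number-theoretic input is needed here.
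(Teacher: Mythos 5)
Your plan differs structurally from the paper's argument. Where you propose to discretize the outer sum over $j_1$ by a partition of $\bS^{d-1}$ into small discs, the paper avoids any such partition: it introduces a third point $\balpha\in\bS^{d-1}$ and proves two lemmas, one showing
\[
\lim_{N\to\infty}\int_{\bS^{d-1}}\sum_{j_1\neq j_2}\chi_{[0,\sigma_1]}\bigl(N^{\frac1{d-1}}\bd(\balpha_{j_1},\balpha)\bigr)\chi_{[0,\sigma_2]}\bigl(N^{\frac1{d-1}}\bd(\balpha_{j_2},\balpha)\bigr)h(\balpha)\,d\balpha=\sigma_1\sigma_2\int h(\balpha)\,d\balpha
\]
(Corollary~\ref{cor1} plus removal of the $j_1=j_2$ diagonal), and a second moving a weight $g(\balpha_{j_1},\balpha_{j_2})$ outside the $\balpha$-integral by uniform continuity. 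The corollary then follows by approximating $f$ by linear combinations of functions $\widetilde f(\bx,\by,z)=g(\bx,\by)\int\chi_{\cB^{d-1}_{\sigma_1}}(\bw)\chi_{\cB^{d-1}_{\sigma_2}}(\bw+z\be_1)\,d\bw$ — the inner $\balpha$-integral above converges precisely to this convolution after rescaling. The advantage of the paper's route is that $\cN_{c,T}$ only ever appears through the average $\int\cN_{c,T}(\sigma_1,\cdot)\cN_{c,T}(\sigma_2,\cdot)\,\lambda$, so no pointwise or quasi-pointwise statement about $\cN_{c,T}$ is ever required; your route, in contrast, needs to replace a discrete sum $\sum_{j_1}$ by an integral against $\lambda$, which is a much more delicate measure-theoretic step.

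There are two concrete problems with your proposal as written. First, your starting display is not a well-defined statement: the quotient $\#\{\bz\in\cP_{c,T}:\bz/\|\bz\|\in\mathfrak D\}/\sigma$ does not converge to $1$ pointwise in $\bupsilon$ (for any fixed small $\sigma$ it is either $0$ or of size $\asymp 1/\sigma$), so the passage from $\sum_{j_1}$ to an integral against $\lambda$ is not a change of variables; it must itself be justified through the second-moment averages supplied by Corollary~\ref{cor1}. You raise this point yourself in the final paragraph, but the actual justification is left untouched, and the approach would need to first establish an $L^2$ version of uniform distribution of the $\bupsilon_j$ at scale $\sim N^{-1/(d-1)}$ — this is exactly the content the paper's third-point trick is designed to sidestep.

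Second, and more seriously, your identification of the two terms in $\sigma_1\sigma_2+\min\{\sigma_1,\sigma_2\}$ is reversed. The $\min\{\sigma_1,\sigma_2\}$ piece is the \emph{diagonal} contribution $j_1=j_2$ (the pair of indicators is supported on the smaller disc, and a single $j$ contributes one), and it must be \emph{subtracted} because the pair correlation sums over $j_1\neq j_2$ only; the Poissonian product $\sigma_1\sigma_2$ is the genuinely surviving off-diagonal term. In the annulus decomposition you describe, the $\min$ contributions cancel telescopically across adjacent annuli (as they should, having no dependence on the larger radius once the discs are nested), and it is the differences of products $\sigma(s')\sigma(s)-\sigma(s)\sigma(s)$ that reassemble into the Riemann sum for $\int s^{d-2}\,ds$. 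Your write-up states the opposite — that $\sigma_1\sigma_2$ is a ``diagonal product'' that vanishes and that $\min\{\sigma_1,\sigma_2\}$ survives — and reaches the correct final constant only because both expressions share the same homogeneity $\propto s^{d-1}$; the reasoning as stated would not go through if pressed for details.
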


Corollary \ref{cor2} implies Theorem \ref{thm000} by approximating the characteristic function from above/below by $\operatorname{C}_0$ functions. The additional dependence of $f(\bupsilon_1,\bupsilon_2,s)$ on $\bupsilon_1,\bupsilon_2\in\bS^{d-1}$ can be used to generalise  Theorem \ref{thm000} to pair counting where $\bupsilon_{j_1}$ and $\bupsilon_{j_2}$ are restricted to different subsets of $\cD_1,\cD_2\subseteq\bS^{d-1}$. Set
\begin{multline*}
R^2_N(\cD_1,\cD_2,s) = \frac{1}{N} \#\bigg\{ (j_1,j_2) \,:\, j_1,j_2\leq N, \, j_1\neq j_2, \\
\bupsilon_{j_1}\in\cD_1,\bupsilon_{j_2}\in\cD_2,\,\, c_d N^{\frac{1}{d-1}}\bd_{\bS^{d-1}}(\bupsilon_{j_1},\bupsilon_{j_2}) \leq s \bigg\} .
\end{multline*}
We then have the following.

\begin{cor}\label{cor3}
Let $d\geq 2$, $0\leq c<1$ and $\bxi\in\bR^d\setminus\bQ^d$; furthermore if $d=2$ assume that $\bxi$ is $(0,0,2)$-vaguely Diophantine.  Then for any $\cD_1,\cD_2\subseteq\bS^{d-1}$ with $\operatorname{vol}_{\bS^{d-1}}(\partial\cD_1)=\operatorname{vol}_{\bS^{d-1}}(\partial\cD_2)=0$ and $s>0$, we have that
\eqlabel{tcor'}{\lim_{T\to\infty}R_{N_c(T)}^2(\cD_1,\cD_2,s)= \frac{\pi^{\frac{d-1}{2}} s^{d-1}}{\Gamma(\tfrac{d+1}{2})} \;\frac{\operatorname{vol}_{\bS^{d-1}}(\cD_1\cap\cD_2)}{V_{\bS^{d-1}}}.}
\end{cor}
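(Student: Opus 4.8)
\textbf{Proof proposal for Corollary \ref{cor3}.}

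The plan is to derive Corollary \ref{cor3} from Corollary \ref{cor2} by the same approximation argument that derives Theorem \ref{thm000} from Corollary \ref{cor2}, but now keeping track of the directional variables $\bupsilon_{j_1},\bupsilon_{j_2}$. First I would fix $\de>0$ and choose a function $f\in\operatorname{C}_0(\bS^{d-1}\times\bS^{d-1}\times\bR)$ of product form $f(\bupsilon_1,\bupsilon_2,s)=g_1(\bupsilon_1)g_2(\bupsilon_2)h(s)$, where $g_i$ is a continuous approximation to $\mathbbm{1}_{\cD_i}$ and $h$ is a continuous approximation to $\mathbbm{1}_{[0,s]}$. Plugging such an $f$ into \eqref{tcor} of Corollary \ref{cor2} gives a limit of the form
\eq{
\frac{V_{\bS^{d-2}}}{V_{\bS^{d-1}}}\pa{\int_{\bS^{d-1}}g_1(\bupsilon)g_2(\bupsilon)\,d\bupsilon}\pa{\int_0^\infty h(s)s^{d-2}ds},
}
since the right-hand side of \eqref{tcor} only sees the diagonal $\bupsilon_1=\bupsilon_2$. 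Note that $V_{\bS^{d-2}}\int_0^\infty \mathbbm{1}_{[0,s]}(u)u^{d-2}du = V_{\bS^{d-2}}s^{d-1}/(d-1) = \pi^{(d-1)/2}s^{d-1}/\Gamma(\tfrac{d+1}{2})$ is exactly the ball volume constant appearing in \eqref{tcor'}, which pins down the normalisation.

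Next I would run the standard squeeze. Since $\operatorname{vol}_{\bS^{d-1}}(\partial\cD_i)=0$, for any $\de>0$ one can pick continuous $g_i^\pm$ with $g_i^-\leq \mathbbm{1}_{\cD_i}\leq g_i^+$, $0\leq g_i^\pm\leq 1$, and $\int_{\bS^{d-1}}(g_i^+-g_i^-)\,d\bupsilon<\de$; similarly one picks continuous $h^\pm$ with $h^-\leq\mathbbm{1}_{[0,s]}\leq h^+$ supported in a fixed compact interval and $\int_0^\infty(h^+-h^-)u^{d-2}du<\de$ (the cut-off near $u=s$ costs nothing in the limit because the limiting measure $s^{d-2}ds$ has no atoms). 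Because $f^+:=g_1^+g_2^+h^+$ and $f^-:=g_1^-g_2^-h^-$ are in $\operatorname{C}_0$ and dominate, resp.\ are dominated by, the indicator $\mathbbm{1}_{\cD_1}(\bupsilon_1)\mathbbm{1}_{\cD_2}(\bupsilon_2)\mathbbm{1}_{[0,s]}(u)$ pointwise, we get $R_N^2(f^-)\leq R_N^2(\cD_1,\cD_2,s)\leq R_N^2(f^+)$ for every $N$. Applying Corollary \ref{cor2} to $f^\pm$ and letting $\de\to0$ (using continuity of the product integral $\pa{\int g_1g_2}\pa{\int h\,u^{d-2}du}$ in the $L^1$-approximation, and the identity $\int_{\bS^{d-1}}\mathbbm{1}_{\cD_1}\mathbbm{1}_{\cD_2}\,d\bupsilon=\operatorname{vol}_{\bS^{d-1}}(\cD_1\cap\cD_2)$) squeezes $\lim_{T\to\infty}R_{N_c(T)}^2(\cD_1,\cD_2,s)$ to the value in \eqref{tcor'}.

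The only genuinely delicate point is the uniformity of the squeeze in $N$, i.e.\ making sure that $R_N^2(f^+)$ and $R_N^2(f^-)$ really do sandwich $R_N^2(\cD_1,\cD_2,s)$ and not merely their limits; this is immediate from the pointwise domination of the integrand/summand, so there is no real obstacle here. A minor technical wrinkle is the boundary term at $u=s$: the inequalities $h^-\leq\mathbbm{1}_{[0,s]}\leq h^+$ force a small discrepancy on a neighbourhood of $u=s$, but since the limit distribution of the distances $u=c_dN^{1/(d-1)}\bd_{\bS^{d-1}}(\bupsilon_{j_1},\bupsilon_{j_2})$ has density $\propto u^{d-2}$ with respect to Lebesgue measure (by Corollary \ref{cor2} applied to one-variable test functions), this neighbourhood contributes $O(\de)$ and vanishes as $\de\to0$. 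Everything else is the routine approximation of bounded Borel sets by continuous functions, exactly as in the deduction of Theorem \ref{thm000} from Corollary \ref{cor2}.
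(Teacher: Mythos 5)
Your proposal is correct and matches the route the paper intends: the paper explicitly states that Corollary~\ref{cor2} yields Theorem~\ref{thm000} by sandwiching indicator functions between $\operatorname{C}_0$ approximants, and that the $\bupsilon_1,\bupsilon_2$-dependence in $f$ gives Corollary~\ref{cor3} by the same argument; your squeeze with product test functions $g_1^\pm g_2^\pm h^\pm$, together with the constant bookkeeping $V_{\bS^{d-2}}/(d-1)=\pi^{(d-1)/2}/\Gamma(\tfrac{d+1}{2})$, is exactly that argument carried out in detail.
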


\section{The space of affine lattices}\label{secSpace}
Let $G=\operatorname{SL}(d,\bR)$ and $\Gamma=\operatorname{SL}(d,\bZ)$. Define $G'=G\ltimes \bR^d$ by
$$(M,\bb)(M',\bb')=(MM', \bb M'+\bb'),$$
and let $\Gamma'=\Gamma\ltimes \bZ^d$ denote the corresponding arithmetic subgroup. The right action of $g=(M,\bb)\in G'$ on $\bR^d$ is defined by $\bx g:=\bx M +\bb$. We embed $G$ in $G'$ via the homomorphism $M\mapsto (M,0)$. In the following we will identify $G$ with the corresponding subgroup in $G'$ and use the shorthand $M$ for $(M,0)$.

Given $\sigma>0$ and $0\leq c<1$, define the cone
\eqlabel{conedef}{\mathfrak{C}_c(\sig):=\set{(x_1,x')\in\bR\times\bR^{d-1}: c<x<1,\|(1-c^d)x'\|<\sig}.}
For $g\in G'$ and any bounded set $\mathfrak{C}\subset\bR^d,$
\eqlabel{Ndef}{\cN(g,\mathfrak{C}):=\#(\mathfrak{C}\cap\bZ^dg).}
By construction, we can view $\cN(\cdot,\mathfrak{C})$ as a function on the space of affine lattices, $\Gamma'\backslash G'$. For $\by=(y_2,\ldots,y_{d})\in\bR^{d-1}$ and $t\ge0$, let
\eqlabel{yphidef}{\widetilde{n}(\by):=\left(\begin{matrix} 1 & y_2 & \cdots & y_{d}\\ & 1 & &  \\ & & \ddots &  \\ & & & 1 \end{matrix}\right),\quad \Phi_t:=\left(\begin{matrix} e^{-\frac{d-1}{d}t} &  &  & \\ & e^{\frac{t}{d}} & &  \\ & & \ddots & \\ & & & e^{\frac{t}{d}} \end{matrix}\right).}

 Set $\be_1=(1,0,\ldots,0)$. As in \cite[p.~1968]{MS10}, we define a smooth map $k:\bS^{d-1}\setminus\set{-\be_1}\to \SO(d)$ by
\eqlabel{kupdef+}{k(\bupsilon):=\textrm{exp}\left(\begin{matrix} 0 & -\by(\bupsilon) \\ \transp{\by(\bupsilon)} & 0_{d-1}\end{matrix}\right)\in \SO(d)}
with $\by(\be_1)=0$ and, for $\bupsilon=(\upsilon_1,\ldots,\upsilon_d)\in\bS^{d-1}\setminus\set{\be_1,-\be_1}$,
$$\by(\bupsilon)=\frac{\operatorname{arccos} v_1}{\sqrt{1-\upsilon_1^2}}(\upsilon_2,\ldots,\upsilon_d)\in\bR^{d-1}.$$
Note that $\|\by(\bupsilon)\|<\pi$. By construction, $\bupsilon=(\cos\|\by(\bupsilon)\|,\sin\|\by(\bupsilon)\|\frac{\by(\bupsilon)}{\|\by(\bupsilon)\|})$, and hence $\be_1 =\bupsilon k(\bupsilon)$ for all $\bupsilon\in\bS^{d-1}\setminus\set{-\be_1}$.

By an elementary geometric argument, given $\sig>0$ and $\eps>0$, there exists $T_0>0$ such that for all $\bupsilon\in\bS^{d-1}\setminus\set{-\be_1}$, $\bxi\in\bR^d$, $M_0\in G$ and $T=e^{\frac{t}{d}}\ge T_0$,
\eqlabel{geoest+}{\cN_{c,T}(\sig,\bupsilon)\leq\cN((1,\bxi)M_0k(\bupsilon)\Phi_t,\mathfrak{C}_0(\sig+\eps)).}
The argument is the same as in the two-dimensional case discussed in \cite{EMV15}; see in particular Fig.~3 (the yellow and red domains should now be viewed as higher-dimensional cones with symmetry axis along $\be_1$).

For $$\bu=(u_{12},\ldots,u_{1d},u_{23},\ldots,u_{(d-1)d})\in\bR^{\frac{d(d-1)}{2}}$$ and 
$$\bv=(v_1,v_2,\ldots,v_d) \in \cT:=\set{(v_1,\ldots,v_d)\in\bR_{>0}^d, v_1\cdots v_d=1},$$ 
let
\eqlabel{uvdef}{n(\bu):=\left(\begin{matrix} 1 & u_{12} & \cdots & u_{1d}\\ & \ddots & & \vdots \\ & & 1 & u_{(d-1)d} \\ & & & 1 \end{matrix}\right),\quad a(\bv):=\left(\begin{matrix} v_1 &  &  & \\ & v_2 & &  \\ & & \ddots & \\ & & & v_d \end{matrix}\right).}
The Iwasawa decomposition of $M\in G$ is given by
\eqlabel{Iwa}{M=n(\bu)a(\bv)k,}
where $\bu\in\bR^{\frac{d(d-1)}{2}}$, $\bv\in\cT$ and $k\in \SO(d)$. 

Consider the Siegel set
\eqlabel{Siegel}{\cS:=\set{n(\bu)a(\bv)k: k\in \SO(d), 0<v_{j+1}\leq\frac{2}{\sqrt{3}}v_{j} %(j=1,\ldots,d-1)
,\, \bu\in[-\frac{1}{2},\frac{1}{2}]^{\frac{d(d-1)}{2}}}.}
This set has the property that it contains a fundamental domain of $G$ and can be covered with a finite number of fundamental domains. Throughout this paper, we fix a fundamental domain of $G$ contained in $\cS$, and denote it by $\cF$. For $x\in \Gamma\backslash G$, there exists a unique $M\in\cF$ such that $x=\Gamma M$. Define $\iota:\Gamma\backslash G\to\cF$ so that $\iota(\Gamma M)=M$.

We extend the above to define a fundamental domain $\cF'$ and Siegel set $\cS'$ of the $\Gamma'$ action on $G'$ by
$$
\cF'= \{ (1,\bb)(M,0) : \bb\in [-\tfrac12,\tfrac12)^d,\, M\in\cF\} ,
$$
$$
\cS'= \{ (1,\bb)(M,0) : \bb\in [-\tfrac12,\tfrac12]^d,\, M\in\cS\} .
$$
As before, we define the map  $\iota:\Gamma'\backslash G'\to\cF'$ by $\iota(\Gamma' g)=g$.

Given $M\in G$, we define $\bv(M)$ as the $\bv$ coordinate of the Iwasawa decomposition
\eqlabel{Iwa'}{\iota(\Gamma M)=n(\bu)a(\bv)k.}
Similarly, for $g\in G'$, we define $\bv(g)$ and $\bb(g)$  as the $\bv$ and $\bb$ coordinates in 
\eqlabel{Iwa234}{\iota(\Gamma' g)=(1,\bb)(n(\bu)a(\bv)k,0).}

We also define 
\eqlabel{rdef}{r(\mathfrak{C}):=\max\set{\del_d,\sup\set{\|\bx\|:\bx\in\mathfrak{C}}}, \qquad \del_d=d4^d.}
and
\eqlabel{srdef}{s_r(g):=\max\set{1\leq i\leq d-1: v_i(g)>2c_dr},}
where $c_d=d\left(\frac{2}{\sqrt{3}}\right)^d$.

\begin{lem}\label{three-one}
For any bounded $\mathfrak{C}\subset\bR^d$, $g\in G'$, $\eta>0$,
\eqlabel{Nest'}{\cN(g,\mathfrak{C})^\eta\leq (C_dr^d)^\eta\prod_{i=1}^{s}\left(v_i^\eta\#\big([-c_drv_i^{-1},c_drv_i^{-1}]\cap(\bZ+b_i)\big)\right),}
where $\bv=\bv(g)=(v_1,\ldots,v_d)$, $\bb=\bb(g)=(b_1,\ldots,b_d)$, $r=r(\mathfrak{C})$, $s=s_r(g)$ and $C_d=4c_d$.
\end{lem}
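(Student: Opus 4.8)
\medskip
\noindent\textbf{Proof plan.}
Since $x\mapsto x^{\eta}$ is nondecreasing on $[0,\infty)$, it is enough to prove the bound for $\eta=1$ and then raise to the power $\eta$. The quantity $\cN(g,\mathfrak C)=\#(\mathfrak C\cap\bZ^dg)$ depends only on the lattice $\bZ^dg$: if $\gamma=(\gamma_0,\bm)\in\Gamma'$ then $\bx\mapsto\bx\gamma_0+\bm$ permutes $\bZ^d$, so $\bZ^d(\gamma g)=\bZ^dg$. Hence we may assume $g=\iota(\Gamma'g)\in\cF'$, i.e.\ $g=(1,\bb)(n(\bu)a(\bv)k,0)$ with $\bb\in[-\tfrac12,\tfrac12)^d$, $\bu\in[-\tfrac12,\tfrac12]^{d(d-1)/2}$ and $v_{j+1}\le\tfrac2{\sqrt3}v_j$. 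Then $\bZ^dg=(\bZ^d+\bb)\,n(\bu)a(\bv)k$; since $\mathfrak C$ is contained in the Euclidean ball of radius $r=r(\mathfrak C)$ and $k\in\SO(d)$ preserves the Euclidean norm,
$$
\cN(g,\mathfrak C)\le\#\Big\{\bx\in\bZ^d:\ \sum_{i=1}^d v_i^2\,w_i(\bx)^2\le r^2\Big\},
\qquad w_i(\bx):=(x_i+b_i)+\sum_{j<i}(x_j+b_j)u_{ji}.
$$

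The next step is a triangular (layer-by-layer) count. Fixing $x_1,\dots,x_{i-1}$, the admissible values of $x_i$ are the integers with $v_i|w_i|\le r$, i.e.\ the integers in an interval of length $2r/v_i$ in the variable $x_i$; so $\cN(g,\mathfrak C)\le\prod_{i=1}^dN_i$, where $N_i$ is the maximum over $(x_1,\dots,x_{i-1})$ of that number. For $i\le s$ one first observes, applying $v_j\ge\tfrac{\sqrt3}{2}v_{j+1}$ from $j=i$ up to $j=s-1$ together with $v_s>2c_dr$ and $(\tfrac{\sqrt3}{2})^{d-2}\cdot2c_d=\tfrac{8d}{3}$, that $v_i>\tfrac{8d}{3}r$ for \emph{every} $i\le s$; in particular $2r/v_i<1$, so $N_i\le1$. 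For $i>s$ we use the crude bound $N_i\le 2r/v_i+1$.

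The heart of the matter is the claim that for $i\le s$,
$$
N_i\ \le\ \#\big([-c_drv_i^{-1},\,c_drv_i^{-1}]\cap(\bZ+b_i)\big).
$$
Since $N_i\le1$ this says: whenever a solution $\bx$ exists, the centred interval meets $\bZ+b_i$. Given a solution, the inequalities $v_j|w_j|\le r$ for $j\le i$ force $x_j=0$ for all $j\le i$, by induction on $j$: one has $|x_j+b_j|\le|w_j|+\big|\sum_{l<j}(x_l+b_l)u_{lj}\big|$, and since $r/v_j$ is extremely small (again $v_j>\tfrac{8d}{3}r$ with $r\ge\delta_d=d4^d$) while the $u_{lj}$ and the already-determined $b_l=x_l+b_l$ are controlled — the latter by a bootstrap that uses the geometric decay $v_{l+1}\le\tfrac2{\sqrt3}v_l$ — the right-hand side is $<\tfrac12$, which pins $x_j=0$. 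With $x_l=0$ for all $l\le i$ we get $b_i=x_i+b_i$ and $|b_i|\le r/v_i+\big|\sum_{l<i}b_lu_{li}\big|\le c_drv_i^{-1}$, so $b_i$ itself lies in the centred interval. The constants $c_d=d(2/\sqrt3)^d$ and $\delta_d=d4^d$ are calibrated exactly so that this bootstrap closes uniformly in $i\le s$; getting the cross-terms $\sum_{l<i}z_lu_{li}$ below the threshold $c_drv_i^{-1}$ is the step I expect to require the real work.

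Finally, assemble the two ranges. Using $\prod_{i=1}^dv_i=1$,
$$
\prod_{i>s}N_i\ \le\ \prod_{i>s}\frac{2r+v_i}{v_i}\ =\ \Big(\prod_{i>s}(2r+v_i)\Big)\prod_{i\le s}v_i .
$$
For $s<i\le d-1$ the maximality of $s$ gives $v_i\le2c_dr$, hence $2r+v_i\le2(c_d+1)r=C_dr$; the remaining factor $2r+v_d$ is controlled in the same way (via $v_d\le\tfrac2{\sqrt3}v_{d-1}$ when $s\le d-2$, or via $\prod_iv_i=1$ when $s=d-1$), the residual slack being absorbed by $r\ge\delta_d$, so that $\prod_{i>s}(2r+v_i)\le C_dr^d$. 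Combining this with the bound on $\prod_{i\le s}N_i$ from the previous step,
$$
\cN(g,\mathfrak C)\ \le\ \Big(\prod_{i\le s}N_i\Big)\Big(\prod_{i>s}N_i\Big)
\ \le\ C_dr^d\prod_{i=1}^s\Big(v_i\,\#\big([-c_drv_i^{-1},c_drv_i^{-1}]\cap(\bZ+b_i)\big)\Big),
$$
and raising to the power $\eta$ gives \eqref{Nest'}.
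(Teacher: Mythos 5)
Your overall strategy is essentially the paper's, carried out iteratively rather than in one stroke: reduce to the Siegel fundamental domain, remove $k$ and $a(\bv)$ to get $|w_i|\leq r v_i^{-1}$ for $\bw=(\bm+\bb)n(\bu)$, and then solve the triangular system to extract a rectangular bound on $m_i+b_i$. The paper does the last step directly by writing $\bm+\bb=\bw\,n(\bu)^{-1}=\bw\,n(\widetilde{\bu})$ and observing that each $\widetilde u_{ij}$ is a sum of at most $2^d$ monomials in the $u_{kl}\in[-\tfrac12,\tfrac12]$, hence $O_d(1)$, and that the Siegel condition gives $v_j^{-1}\leq(2/\sqrt3)^{i-j}v_i^{-1}$ for $j<i$; this immediately yields $\prod_i[-rv_i^{-1},rv_i^{-1}]\,n(\bu)^{-1}\subseteq\prod_i[-c_drv_i^{-1},c_drv_i^{-1}]$, and then the coordinate-wise factorisation of the box count.

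The genuine gap is the step you flag yourself: showing that, for $i\leq s$, the existence of a solution forces $[-c_drv_i^{-1},c_drv_i^{-1}]\cap(\bZ+b_i)\neq\emptyset$. The induction hypothesis you propose, $x_j=0$ for $j<i$, is too weak, and the argument as you sketch it collapses already for $d=3$: once you only know $x_j+b_j=b_j\in[-\tfrac12,\tfrac12)$, the cross-term $\big|\sum_{l<i}(x_l+b_l)u_{li}\big|$ is only bounded by $\tfrac{i-1}{4}$, which is $\geq\tfrac12$ as soon as $i\geq3$, so the next $x_i$ is no longer pinned to $0$; and even when it is, $|b_i|\leq\tfrac12$ falls far short of the needed $|b_i|\leq c_drv_i^{-1}\ll1$. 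What must be carried through the induction is the quantitative estimate $|x_j+b_j|\leq C_j\,rv_j^{-1}$ for explicit constants $C_j$, not merely the qualitative $x_j=0$. With the Siegel inequality $v_l^{-1}\leq(2/\sqrt3)^{i-l}v_i^{-1}$ this makes the cross-term $\ll rv_i^{-1}$, closes the induction, and finally gives $|b_i|\leq c_drv_i^{-1}$. But once the induction is phrased that way, you are running exactly the back-substitution on $\bm+\bb=\bw\,n(\widetilde\bu)$, i.e.\ you have reproduced the paper's matrix computation; the iterative route is not actually shorter or different in substance, it is the same computation peeled one row at a time with the hard estimate deferred.

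Two smaller points. First, your reduction to $\eta=1$ by monotonicity of $x\mapsto x^\eta$ needs to be paired with \eqref{zero-one}: raising $\#([-c_drv_i^{-1},c_drv_i^{-1}]\cap(\bZ+b_i))$ to the power $\eta$ gives that same quantity only because it lies in $\{0,1\}$; the paper invokes this explicitly. Second, your assembly $\prod_{i>s}(2r+v_i)\leq C_dr^{d-s}$ is treated a little casually at $i=d$, but the same imprecision appears in the paper's own step (the inequality $2c_drv_i^{-1}+1\leq v_i^{-1}(2c_dr+2r)$ requires $v_i\leq 2r$, while the definition of $s$ only gives $v_i\leq 2c_dr$), so this is a constant-calibration issue shared by the original rather than a defect special to your write-up.
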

\begin{proof}
Let $\mathfrak{D}_r$ be the smallest closed ball of radius $r$ centered at $0$ which contains $\mathfrak{C}$. Then
\eqlabel{Nest1}{
\begin{aligned}
\cN(g,\mathfrak{C})&\leq\cN(g,\mathfrak{D}_r)=\cN(\iota(\Gamma g),\mathfrak{D}_r)\\
&=\#(\mathfrak{D}_r\cap (\bZ^d+\bb)n(\bu)a(\bv))\\
&\leq \#([-r,r]^d\cap(\bZ^d+\bb)n(\bu)a(\bv))\\
&= \#\left(\prod_{i=1}^{d}[-rv_i^{-1},rv_i^{-1}]\cap(\bZ^d+\bb)n(\bu)\right).\end{aligned}}
Since $0<v_{i+1}\leq\frac{2}{\sqrt{3}}v_i$ for $1\leq i\leq d-1$, we have $v_j^{-1}\leq\left(\frac{2}{\sqrt{3}}\right)^{j-i}v_i^{-1}$ for any $1\leq i< j\leq d$. It follows that

\eqlabel{Nest2}{\begin{aligned}
\cN(g,\mathfrak{C})&\leq \#\left(\prod_{i=1}^{d}[-c_drv_i^{-1},c_drv_i^{-1}]\cap(\bZ^d+\bb)\right)\\
&\leq\prod_{i=1}^{d}\#\left([-c_drv_i^{-1},c_drv_i^{-1}]\cap(\bZ+b_i)\right)\\
&\leq\prod_{i=s+1}^{d}(2c_drv_i^{-1}+1)\times\prod_{i=1}^{s}\#([-c_drv_i^{-1},c_drv_i^{-1}]\cap(\bZ+b_i)).
\end{aligned}}
For $i\ge s+1$ we have $v_i^{-1}\ge\frac{1}{2c_dr}$ by definition of $s$. It follows that
$$\prod_{i=s+1}^{d}(2c_drv_i^{-1}+1)\leq (4c_dr)^{d-s}\prod_{i=s+1}^{d}v_i^{-1}=(4c_dr)^{d-s}\prod_{i=1}^{s}v_i,$$
hence
$$\cN(g,\mathfrak{C})\leq C_dr^d\prod_{i=1}^{s}\left(v_i\#([-c_drv_i^{-1},c_drv_i^{-1}]\cap(\bZ+b_i))\right).$$
From the fact that 
\eqlabel{zero-one}{\#([-c_drv_i^{-1},c_drv_i^{-1}]\cap(\bZ+b_i))\in\set{0,1}} for any $1\leq i\leq s$,
\eqref{Nest'} follows.
\end{proof}
The case of mixed moment will be dealt with by the inequality
\eqlabel{union}{\big| \cN(g,\mathfrak{C}_1)^{z_1}\cdots\cN(g,\mathfrak{C}_m)^{z_m}\big| \leq \big| \cN(g,\mathfrak{C}_1\cup\cdots\cup\mathfrak{C}_m)^{z_1+\cdots+z_m}\big|.}

\section{Properties of the limiting distribution}\label{sec:properties}
In this section we prove the properties (a)-(e) of the limiting distribution in Theorem \ref{dist}. We denote by $m_{X'}$, $m_{X_1}$, and $m_{X_q}$ the Haar probability measures on the homogeneous spaces
\eq{X'=\Gamma'\backslash G',\quad X_1=\Gamma\backslash G,\quad X_q=\Gamma_q\backslash G,}
respectively. Here $\Gamma_q$ denotes the congruence subgroup
\eq{\Gamma_q:=\set{\gamma\in\Gamma_q: \gamma\equiv \operatorname{id}  ( \operatorname{mod} q)}}
for $q\ge 2$. According to \cite[Theorems 6.3, 6.5 and subsequent remarks, and Lemma 9.5]{MS10}, the limiting distribution $E_{c,\bxi}(\cdot,\underline{\sig})$ in Theorem \ref{dist} is given as follows:

\eqlabel{limdist}{E_{c,\bxi}(\underline{r},\underline{\sig})= \begin{cases}
m_{X_1}\big(\set{\Gamma M\in X_1: \forall j, \; \#\big(\bZ^dM\cap \mathfrak{C}_c(\sig_j)\big)=r_j}\big) & \text{if $\bxi\in\bZ^d$},\\
m_{X_q}\big(\set{\Gamma_q M\in X_q: \forall j, \; \#\big((\bZ^d+\frac{\bp}{q})M\cap \mathfrak{C}_c(\sig_j)\big)=r_j }\big) & \text{if $\bxi=\frac{\bp}{q}\in\bQ^d\setminus \bZ^d$},\\
m_{X_q}\big(\set{\Gamma'g\in X': \forall j, \; \#\big(\bZ^d g\cap \mathfrak{C}_c(\sig_j)\big)=r_j }\big) & \text{if $\bxi\in\bR^d\setminus \bQ^d$},
\end{cases}}
where $\mathfrak{C}_c(\sig)$ is as in \eqref{conedef}.

Property (a) follows from the observation that the distribution described in \eqref{limdist} is independent of $\lambda$ and $\cL$.

For $\bxi\in\bR^d\setminus\bQ^d$ the distribution is also independent of $\bxi$, so property (d) follows.

Property (b) follows from \eqref{mean}.

Property (c) and (e) follows from calculations of \cite{M00}.
We write $g=(1,\mathbf{b})(M,0)\in G'$ with $M=n(\bu)a(\bv)k\in\cS$ as in \eqref{Iwa} and \eqref{Siegel}. For $s\in\set{1,\ldots,d-1}$, put
\eq{\cS_s:=\set{M=n(\bu)a(\bv)k\in\cS: v_{s+1}\leq 1 \leq\frac{2}{\sqrt{3}}v_s},}
and for $s=0,d,$
\eq{\cS_0:=\set{M=n(\bu)a(\bv)k\in\cS: v_1 \leq 1},}
\eq{\cS_d:=\set{M=n(\bu)a(\bv)k\in\cS: v_d\geq \frac{\sqrt{3}}{2},}}
then the sets $\cS_0$ and $\cS_d$ are clearly compact, and we also have $\cS=\displaystyle\bigcup_{s=0}^{d}\cS_s$ (see \cite[Lemma 3.12]{M00}). 
For $k\in \operatorname{SO}(d)$ we denote by $\chi_k$ the characteristic function of the set $\mathfrak{C}_c(\sig)k^{-1}$ and define
\eq{\phi_s(k,w_1,\ldots,w_s):=\int_{
\bR^{d-s}}\chi_k(w_1,\ldots,w_s,t_{s+1},\ldots,t_d)dt_{s+1}\cdots dt_{d},}
\eq{\phi_{s,\operatorname{max}}(w_1,
\ldots,w_s):=\max_{k\in\operatorname{SO}(d)}\phi_s(k,w_1,\ldots,w_s).}
We have
\eq{\cN(g,\mathfrak{C}_c(\sig))=\displaystyle\sum_{\bm\in\bZ^d}\chi_k(v_1w_1(\bm),\ldots,v_dw_d(\bm)),}
where $w_i(\bm)=(m_i+b_i)+\displaystyle\sum_{j=1}^{i-1}u_{ji}(m_j+b_j)$. Without loss of generality we may assume $b_1,\ldots,b_d\in[-\frac{1}{2},\frac{1}{2}]$. For $M\in \cS_s$ with sufficiently large $v_1\cdots v_s$,
\eq{\begin{aligned}\cN(g,\mathfrak{C}_c(\sig))&=\displaystyle\sum_{\bm\in \set{\mathbf{0}}\times\bZ^{d-s}}\chi_k(v_1w_1(\bm),\ldots,v_dw_d(\bm))\\
&=v_{s+1}^{-1}\cdots v_{d}^{-1}\phi_s\left(k,v_1x_1,\ldots,v_sx_s\right)+O(1),\end{aligned}}
where $x_i=b_i+\displaystyle\sum_{j=1}^{i-1}u_{ji}b_j$ for $i=1,\ldots,s$.

We first consider the case of $E_{c,\bxi}$ for $\bxi\in\bZ^d$. For $r_0\to\infty$ and $m_G$ denoting the Haar measure of $G$ (with arbitrary normalisation),
\eq{\begin{aligned}
\displaystyle\sum_{r=r_0}^{\infty}E_{c,\bxi}(r,\sig)&=m_{X_1}\left(\set{M\in X_1: \cN(g,\mathfrak{C}_c(\sig))\ge r_0}\right)
\\&\asymp\displaystyle\sum_{s=0}^{d}m_G\left(\{M\in\cS_s: v_1\cdots v_s\phi_s(k,0,\ldots,0)\ge r_0+O(1)\}\right)
\\&\asymp\displaystyle\sum_{s=0}^{d}m_G\left(\left\{M\in\cS_s: v_1\cdots v_s\ge \frac{r_0+O(1)}{\phi_{s,\operatorname{max}}(0,\ldots,0)}\right\}\right). \end{aligned}}
In the last line we are using the continuity of $\phi_{s,\operatorname{max}}$ with respect to $k\in\operatorname{SO}(d)$. According to the calculation of \cite[Proof of Theorem 3.11]{M00} with $n=2$, the sum in the last line is $\asymp r_0^{-d}$. This proves property (c) for $\bxi\in\bZ^d$. The case of other $\bxi\in\bQ^d$ is analogous. 

In the case of $\bxi\in\bR^d\setminus\bQ^d$ we get
\eq{\begin{aligned}
\displaystyle\sum_{r=r_0}^{\infty}E_{c,\bxi}(r,\sig)&=m_{X'}\left(\set{g\in X': \cN(g,\mathfrak{C}_c(\sig))\ge r_0}\right)
\\&\asymp\displaystyle\sum_{s=0}^{d}m_G\left(\{M\in\cS_s: v_1\cdots v_s\phi_s(k,v_1x_1,\ldots,v_sx_s)\ge r_0+O(1)\}\right)
\\&\asymp\displaystyle\sum_{s=0}^{d}m_G\left(\left\{M\in\cS_s: v_1\cdots v_s\ge \frac{r_0+O(1)}{\phi_{s,\operatorname{max}}(v_1x_1,\ldots,v_sx_s)}\right\}\right). \end{aligned}}
In this case we use the calculation of \cite[Proof of Theorem 4.3]{M00} which implies that the sum in the last line is $\asymp r_0^{-d-1}$. This proves property (e).
\section{Escape of mass}\label{secEscape}

Denote by $\chi_{I}$ the characteristic function of a subset $I\subseteq\bR$. %, i.e. $\chi_{I}(v)=0$ if $v\notin I$ and $\chi_{I}(v)=1$ if $v\in I$. 
For $R\ge 1$ and $\eta,r>0$, define the $\Gamma'$-invariant function $F_{R,\eta,r}:G'\to\bR$ by
\eqlabel{Fetadef}{\begin{aligned}
F_{R,\eta,r}(g):=\chi_{[R,\infty)}\bigg(\prod_{i=1}^{s_r(g)}v_i(g)\bigg)\; \prod_{i=1}^{s_r(g)} v_i(g)^\eta \chi_{[-c_dr,c_dr]}\big(v_i(g) b_i(g)\big) .
\end{aligned}}
In view of Lemma \ref{three-one}, \eqref{zero-one} and \eqref{union}, we note that for 
\eqlabel{etar}{\eta=\operatorname{Re}(z_1)+\cdots+\operatorname{Re}(z_m),\quad r=r(\mathfrak{C}_1\cup\cdots\cup\mathfrak{C}_m),}
and all $g\in G$ such that $\prod_{i=1}^{s_r(g)}v_i(g)\ge R$ with $R$ sufficiently large, we have that
\eqlabel{NFrel}{\big| \cN(g,\mathfrak{C}_1)^{z_1}\cdots\cN(g,\mathfrak{C}_m)^{z_m}\big|\leq (C_dr^d)^\eta F_{R,\eta,r}(g).}

The following proposition establishes under which conditions there is no escape of mass in the equidistribution of horospheres with respect to the function $F_{R,\eta,r}$ and thus also for $\cN(g,\mathfrak{C}_1)^{z_1}\cdots\cN(g,\mathfrak{C}_m)^{z_m}$.

\begin{prop}\label{mainprop}
Let $\bxi\in\bR^d$, $M_0\in G$, $\eta,r>0$, and $\psi\in \operatorname{C}_0(\bR^{d-1})$. Assume that one of the following hypotheses hold:
\begin{enumerate}
\item[{\rm (B1)}] $\eta<d.$
\item[{\rm (B2)}] $\eta<d+1$ and $\bxi$ is $(0,\eta-2,2)$-vaguely Diophantine if $d=2$ and $(d-1,\eta-d,1)$-vaguely Diophantine if $d\geq 3$.
\end{enumerate}
Then
\eqlabel{nonescape}{\lim_{R\to\infty}\limsup_{t\to \infty}\left|\int_{\by\in\bR^{d-1}}F_{R,\eta,r}\left(\Gamma'(1,\bxi)M_0\widetilde{n}(\by)\Phi_t\right)\psi(\by)d\by\right|=0.}
\end{prop}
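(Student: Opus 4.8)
The plan is to unfold the horospherical integral coordinate-by-coordinate and reduce the escape-of-mass estimate to a convergent sum over Diophantine scales. First I would note that it suffices to bound the integral of $F_{R,\eta,r}$ without the test function $\psi$ (replace $\psi$ by $\|\psi\|_\infty$ times the characteristic function of a large box containing its support), so the task becomes controlling the measure of the set of $\by$ in a fixed compact $\cK\subset\bR^{d-1}$ for which $g=\Gamma'(1,\bxi)M_0\widetilde n(\by)\Phi_t$ has $\prod_{i\le s_r(g)}v_i(g)\ge R$, weighted by $\prod_{i\le s_r(g)}v_i(g)^\eta\,\chi_{[-c_dr,c_dr]}(v_i(g)b_i(g))$. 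The key structural point is that the $\bv$-coordinates of $\iota(\Gamma'g)$ are essentially the successive minima / Iwasawa scales of the lattice $\bZ^d(1,\bxi)M_0\widetilde n(\by)\Phi_t$, and one can stratify $\cK$ according to which index $s=s_r(g)$ occurs and, within that, according to the dyadic size $2^l\le \prod_{i\le s}v_i(g)<2^{l+1}$; the contribution of the stratum is then $\ll 2^{l\eta}$ times the measure of the corresponding $\by$-set, and we want the sum over $l\ge\log_2 R$ to go to zero as $R\to\infty$.

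The heart of the matter is estimating, for each $l$, the measure of $\{\by\in\cK: \prod_{i\le s}v_i(g)\asymp 2^l,\ v_i(g)b_i(g)\in[-c_dr,c_dr]\ \forall i\le s\}$. The event that the top $s$ Iwasawa scales multiply out to be of size $2^l$ means geometrically that the lattice $\bZ^d(1,\bxi)M_0\widetilde n(\by)\Phi_t$ contains a sublattice of rank $d-s$ lying in a subspace of covolume $\asymp 2^{-l}$; combined with the flow $\Phi_t$ and the unipotent $\widetilde n(\by)$, this forces $\by$ to lie close to a union of lower-dimensional rational (affinely rational, because of the $\bxi$-shift) subvarieties, and counting these is where the Diophantine hypothesis on $\bxi$ enters: the number of relevant "approximating" integer vectors $\bm$ up to a given height is controlled by $\zeta(\bxi,\cdot)$, precisely because $v_i(g)b_i(g)\in[-c_dr,c_dr]$ is a one-dimensional lattice-point condition of the shape $\#([-c_drv_i^{-1},c_drv_i^{-1}]\cap(\bZ+b_i))=1$, i.e. $\|\bxi\cdot\bm\|_{\bZ}$ small. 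Carrying this out gives a bound on the $l$-th stratum of the form (const) $\cdot\, l^{\rho}\,2^{\mu l}\,\zeta(\bxi,2^{l-1})^{-\nu}$ with the exponents matching $(B1)$ ($\eta<d$, where the bound is already summable with $\zeta\equiv 1$, i.e. no Diophantine condition needed because $\prod v_i^{-1}$ itself supplies enough decay) and $(B2)$ ($\eta<d+1$ in the borderline stratum $s=d-1$, where one extra power from $\zeta$ is required). Summing and using the definition of $(\rho,\mu,\nu)$-vaguely Diophantine makes the tail in $l$ vanish as $R\to\infty$; the remaining finitely many strata with bounded $\prod v_i(g)$ contribute nothing to $\limsup_R$.

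I would structure the write-up as: (i) reduce to $\psi=\mathbbm 1_{\cK}$; (ii) a "geometry of numbers" lemma translating $\prod_{i\le s}v_i(g)\ge 2^l$ into the existence of a short rational-up-to-$\bxi$ vector, i.e. relating the $\Phi_t$-expanded Iwasawa scales to successive minima and reading off the constraint on $\by$; (iii) the dyadic decomposition in $l$ and the per-stratum measure estimate using Diophantine input; (iv) summation. The main obstacle I anticipate is step (ii)/(iii) in the borderline case $d\ge 3$, $\eta$ close to $d+1$, $s=d-1$: here one is right at the edge where the naive bound from the volume factors $\prod v_i^{-1}$ fails to be summable and one genuinely needs the gain from $\zeta(\bxi,\cdot)^{-1}$; getting the height of the relevant integer vectors $\bm$ to match up with the argument $2^{l-1}$ of $\zeta$ (rather than some other power), and handling the interplay between the $\Phi_t$-scaling and the unipotent direction uniformly in $t$ before taking $\limsup_{t\to\infty}$, is the delicate bookkeeping. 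For $d=2$ this is exactly the estimate already present in \cite{EMV15}, so I would model the argument on theirs and focus the new work on the higher-dimensional stratification.
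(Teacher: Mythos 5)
Your high-level plan tracks the paper's proof in outline: unfold $\Gamma'(1,\bxi)M_0\widetilde n(\by)\Phi_t$ into a $\Gamma$-part $\gamma(\by)$ and a fundamental-domain part $h(\by)$, stratify dyadically in the Iwasawa scales, bound each stratum by $2^{\eta l}$ times a measure of $\by$-sets, and use the Diophantine hypothesis to gain in the borderline stratum. That much is correct. But two things in your sketch are either wrong or left unresolved at exactly the point where the work lies.

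First, you stratify only by the total $l$ with $\prod_{i\le s}v_i(g)\asymp 2^l$. The paper stratifies by the full multi-index $\underline l=(l_1,\dots,l_s)$, i.e.\ $v_i(g)\asymp 2^{l_i}$ for each $i$, and only afterwards sums over $\{\underline l: \sum l_i = l\}$. This refinement is not cosmetic: the condition $v_i(g) b_i(g)\in[-c_dr,c_dr]$ is equivalent, after unfolding, to $|\bxi\cdot\bm_i|_{\bZ}\lesssim 2^{-l_i}$ for the corresponding primitive vector $\bm_i$, so the Diophantine input enters \emph{one scale $l_i$ at a time}, and your stratification by the aggregate $l$ alone cannot see this. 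Related to this, $v_i(g)\geq\delta_d 2^{l_i}$ is what translates (via Lemma~\ref{betaprod}) into the geometric constraint $|\bbeta_i(\by)|<2^{-l_i}$ that localizes $\by$ near a hyperplane; there is no clean analogue if you only control $\prod v_i$.

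Second, and more substantively, the per-stratum estimate---your steps (ii)--(iii), which you yourself flag as ``the main obstacle''---is not carried out, and it contains the two genuinely nontrivial ingredients of the paper. One is the successive-minima / wedge-product decomposition $\Lambda_{\underline k}(\underline p)$ (the paper's Lemma~\ref{pcount} and \eqref{perpsize}), which tracks how close the chosen lattice vectors $\bx_1,\dots,\bx_s\in\Lambda$ come to being linearly dependent; without the $\underline p$-bookkeeping, the naive count of tuples of lattice vectors is far too large, and the volume of $\Om_{\underline l}(\bx_1,\dots,\bx_s)$ in Lemma~\ref{msrbdd} (a $\ll e^{-(d-1)st/d}\prod 2^{p_i-l_i-k_i}$ bound) does not compensate. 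The other is the covering argument in Lemma~\ref{pcount'}: cover the constraint region by cubes of sidelength $\asymp\|M_0\|^{-1}\zeta(\bxi,2^{l_j-1})$ and observe that each cube contains \emph{at most one} lattice point with $|\bxi\cdot\bm|_{\bZ}\leq \delta_{d,r}2^{-l_j}$, by the very definition of $\zeta$. This is how $\zeta(\bxi,2^{l-1})^{-1}$ (or $\zeta^{-2}$ in $d=2$, since there $\underline p$ must vanish) actually appears, and it is this power that makes the vaguely-Diophantine sum finite. Your sketch says ``counting these is where the Diophantine hypothesis on $\bxi$ enters'' but gives no mechanism; the cube-covering step is precisely that mechanism and is not recoverable from the phrase. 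Until both ideas are supplied, the proposal has a gap at the crux.
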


To prepare for the proof of this statement, put $\cK:=\supp \psi$. Without loss of generality we may assume $\cK\subset[-1,1]^{d-1}$. Indeed, there exists $s_0\ge 0$ such that $e^{-s_0}\cK\subset[-1,1]^{d-1}$, so we may replace $M_0,\by$, and $\Phi_t$ in \eqref{nonescape} by $M_0\Phi_{-s_0}, e^{s_0}\by$, and $\Phi_{t+s_0}$, respectively, and reduce it to the case $\cK\subset[-1,1]^{d-1}$.

Next we define two maps $\gamma=\gamma_t:\bR^{d-1}\to\Gamma$ and $h=h_t:\bR^{d-1}\to\cF$ as follows. For $\by\in\bR^{d-1}$, $t\in\bR$, there exist unique $\gamma(\by)=\gamma_t(\by)\in\Gamma$ and $h(\by)=h_t(\by)\in\cF$ such that
$$M_0\widetilde{n}(\by)\Phi_t=\gamma(\by)h(\by).$$
Note that $\Gamma'(1,\bxi)M_0\widetilde{n}(\by)\Phi_t$ in \eqref{nonescape} can now be expressed as
$$\Gamma'(1,\bxi)M_0\widetilde{n}(\by)\Phi_t=\Gamma'(1,\bxi\gamma(\by))(h(\by),0).$$
For $1\leq s\leq d-1$ and $\underline{l}=(l_1,\ldots,l_s)\in\bZ^s_{\ge0}$, we let
\eqlabel{Xilsdef}{\Xi^s_{\underline{l}}:=\set{g\in\cF:s_r(g)=s, \del_d 2^{l_i}<  v_i(g)\leq\del_d 2^{l_i+1} (i=1,\ldots,s)}} with $\del_d=d4^d$. Then for $g=(1,\bxi \gamma(\by))h(\by)$ with $h(\by)\in\Xi_{\underline{l}}^s$ we have
$$
F_{R,\eta,r}(\Gamma'(1,\bxi)M_0\widetilde{n}(\by)\Phi_t)=F_{R,\eta,r}(\Gamma'(1,\bxi \gamma(\by))h(\by))
\leq \delta_d^\eta2^{\eta l}\chi_{[R,\infty)}\big(\delta_d^\eta2^{\eta l}\big),$$
where $l=l_1+\cdots+l_s$. It follows that the integral in \eqref{nonescape} is bounded by
\eqlabel{Xidecomp}{
\begin{aligned}
&\left|\int_{\by\in\cK}F_{R,\eta,r}(\Gamma'(1,\bxi)M_0\widetilde{n}(\by)\Phi_t)d\by\right|\\
&\ll\displaystyle\sum_{s=1}^{d-1}\displaystyle\sum_{l=\lfloor\log_2 R\rfloor}^{\infty} 2^{\eta l} \displaystyle\sum_{\substack{\underline{l}=(l_1,\ldots,l_s)\in\bZ^s_{\ge0}, \\ l_1+\cdots+l_s=l}}\operatorname{vol}_{\bR^{d-1}}\left(\set{\by\in\cK: h(\by)\in\Xi_{\underline{l}}^s} \right).\end{aligned}}

This will be sufficient for proving case (B1). For (B2) we need a refinement that also considers the size of $\bxi\gamma(\by)$; see \eqref{Xidecomp'} below.

Let us write 
$\bbeta_i(\by):=\be_i \transp{h(\by)^{-1}}$ for $1\leq i\leq d$ and $\by\in\bR^{d-1}$, and consider the Iwasawa decomposition of $h(\by)$, $$h(\by)=n(\bu(\by))a(\bv(\by))k(\by).$$

\begin{lem}\label{betaprod}
If $h(\by)\in\Xi^s_{\underline{l}}$ for $\underline{l}\in\bZ_{\ge0}^{s}$, then $|\bbeta_i(\by)|< 2^{-l_i}$ for all $1\leq i\leq s$.
\end{lem}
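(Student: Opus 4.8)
The plan is to unwind the definitions and read off the estimate on $|\bbeta_i(\by)|$ from the lower bound on $v_i(\by)$ that the condition $h(\by)\in\Xi^s_{\underline l}$ provides. First I would recall that $\bbeta_i(\by)=\be_i\transp{h(\by)^{-1}}$ is the $i$-th row of $\transp{h(\by)^{-1}}$, equivalently the $i$-th column of $h(\by)^{-1}$, and that this vector is paired against rows of $h(\by)$ via $\bbeta_i(\by)\cdot(\be_j h(\by)) = \be_i h(\by)^{-1}h(\by)\transp{}\cdots$ — more precisely $\bbeta_i\cdot \be_j h(\by)=\delta_{ij}$, so the $\bbeta_i(\by)$ form the dual basis to the rows of $h(\by)$. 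The key point is then to compute the length of this dual basis vector in terms of the Iwasawa data $h(\by)=n(\bu(\by))a(\bv(\by))k(\by)$: since $k(\by)\in\SO(d)$ is an isometry, $\norm{\bbeta_i(\by)} = \norm{\be_i \transp{(n(\bu(\by))a(\bv(\by)))^{-1}}} = \norm{\be_i a(\bv(\by))^{-1}\transp{n(\bu(\by))^{-1}}}$. Because $n(\bu)$ is upper-triangular unipotent, $\transp{n(\bu)^{-1}}$ is lower-triangular unipotent, and $a(\bv)^{-1}=\diag{v_1^{-1},\dots,v_d^{-1}}$; so $\be_i a(\bv)^{-1}\transp{n^{-1}}$ has $i$-th coordinate exactly $v_i^{-1}$ and all other coordinates involving only $v_j^{-1}$ for $j\ge i$ together with entries of $\transp{n^{-1}}$.

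Next I would use the Siegel-set structure. Since $h(\by)\in\cF\subset\cS$, the coordinates satisfy $v_{j+1}\le \tfrac{2}{\sqrt3}v_j$, hence $v_j^{-1}\le(\tfrac{2}{\sqrt3})^{j-i}v_i^{-1}$ for $j\ge i$, and the off-diagonal entries of $n(\bu)$ (and hence of $\transp{n(\bu)^{-1}}$, by the standard inversion formula for unipotent matrices, up to a dimensional constant) are bounded by a constant depending only on $d$ because $\bu\in[-\tfrac12,\tfrac12]^{d(d-1)/2}$. Combining these gives $\norm{\bbeta_i(\by)}\le C_d\, v_i(\by)^{-1}$ for a constant $C_d$ depending only on $d$ (this is where one must be slightly careful: the entries of $\transp{n^{-1}}$ are polynomials in the $u_{jk}$ of bounded degree, so they are bounded by a $d$-dependent constant on the compact cube, and the sum of $\le d$ terms each of size $\le C_d v_i^{-1}$ is again $\le C_d v_i^{-1}$ after renaming the constant). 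Finally, the hypothesis $h(\by)\in\Xi^s_{\underline l}$ means precisely $v_i(\by)>\delta_d 2^{l_i}$ for $1\le i\le s$ with $\delta_d=d4^d$; so $\norm{\bbeta_i(\by)}\le C_d v_i(\by)^{-1}< C_d\delta_d^{-1}2^{-l_i}\le 2^{-l_i}$, provided $\delta_d=d4^d$ has been chosen large enough to dominate $C_d$ — which is the role of the generous constant $\delta_d$. Passing from the Euclidean norm to the supremum norm $|\cdot|$ only improves the bound.

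I expect the only real obstacle to be bookkeeping the constant: one has to make sure that the constant $C_d$ coming from (i) the inversion of the unipotent part, (ii) the bound $(\tfrac{2}{\sqrt3})^{j-i}\le(\tfrac2{\sqrt3})^{d}$ on the ratios of the $v_j$'s, and (iii) the number $\le d$ of summands, is genuinely absorbed by $\delta_d=d4^d$. This is a finite, dimension-only computation and carries no analytic content; since the paper has defined $\delta_d=d4^d$ with exactly this kind of slack in mind, the inequality $C_d\le \delta_d$ should hold comfortably. Everything else is a direct substitution of the Iwasawa coordinates into the definition of $\bbeta_i(\by)$.
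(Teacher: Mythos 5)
Your plan is essentially the paper's proof: expand $\bbeta_i(\by)=\be_i\,\transp{h(\by)^{-1}}$ in the Iwasawa coordinates, observe that the entries of $n(\bu)^{-1}$ are bounded by a dimensional constant (since $\bu\in[-\tfrac12,\tfrac12]^{d(d-1)/2}$ and inversion of a unipotent matrix produces only bounded polynomial combinations), and use the Siegel-set inequality $v_{j+1}\le\tfrac{2}{\sqrt3}v_j$ to compare the $v_j^{-1}$'s with $v_i^{-1}$; the generous constant $\delta_d=d4^d$ is indeed exactly what absorbs the resulting dimensional factor.

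Two bookkeeping slips in your intermediate step should be flagged, even though neither derails the argument. First, $\transp{\big(n(\bu)a(\bv)\big)^{-1}}=\transp{n(\bu)^{-1}}\,a(\bv)^{-1}$, not $a(\bv)^{-1}\,\transp{n(\bu)^{-1}}$ as you wrote: the transpose reverses the product, and $a(\bv)^{-1}$ does not commute with $\transp{n(\bu)^{-1}}$. As written, your expression $\be_i\,a(\bv)^{-1}\transp{n(\bu)^{-1}}$ would multiply every coordinate of the $i$-th row by the single scalar $v_i^{-1}$, whereas with the correct order the $j$-th coordinate carries the scalar $v_j^{-1}$. Second, since $\transp{n(\bu)^{-1}}$ is \emph{lower}-triangular unipotent, the nonzero coordinates of $\be_i\,\transp{n(\bu)^{-1}}a(\bv)^{-1}$ sit at $j\le i$, not $j\ge i$ as you state, so the Siegel-set comparison you need is $v_j^{-1}\le(\tfrac{2}{\sqrt3})^{i-j}v_i^{-1}$ for $j<i$ --- which is what the paper uses, and is available for the same reason you cite. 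Once these are repaired your chain of estimates gives $|\bbeta_i(\by)|\le\|\bbeta_i(\by)\|\le C_d\,v_i(\by)^{-1}\le\delta_d\,v_i(\by)^{-1}<2^{-l_i}$, in line with the paper.
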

\begin{proof}
For the sake of simplicity we write $v_{i}=v_{i}(h(\by))$ for $1\leq i\leq d$ and $u_{ij}=u_{ij}(h(\by))$ for $1\leq i<j\leq d$. We also define $\widetilde{\bu}(\by)=\left(\widetilde{u}_{ij}\right)_{1\leq i<j\leq d}$ by $n(\widetilde{\bu}(\by))=n(\bu(\by))^{-1}$. Note that each $\widetilde{u}_{ij}$ can be expressed in terms of at most $2^d$ monomials of $u_{12},\ldots,u_{(d-1)d}$ with coefficients $\pm 1$, hence $|\widetilde{u}_{ij}|\leq 2^d$ for any $1\leq i<j\leq d$.

If $h(\by)\in\Xi^s_{l}$, then we have
\eq{\begin{aligned}\bbeta_i(\by)&=\be_i \transp{n(\bu(\by))}^{-1}\transp{a(\bv(\by))}^{-1}\transp{k}^{-1}\\
&=\left(\be_i-\sum_{j=1}^{i-1}\widetilde{u}_{ji}\be_j\right)a(\bv(\by))^{-1}k\\
&=\left(v_i^{-1}\be_i-\sum_{j=1}^{i-1}v_{j}^{-1}\widetilde{u}_{ji}\be_j\right)k
\end{aligned}}
for $1\leq i\leq d$. Since $v_{j}^{-1}\leq \left(\frac{2}{\sqrt{3}}\right)^{i-j}v_{i}^{-1}\leq 2^dv_i^{-1}$ for any $1\leq j<i\leq d$,
\eq{|\bbeta_i(\by)|=\left|v_i^{-1}\be_i-\sum_{j=1}^{i-1}v_{j}^{-1}\widetilde{u}_{ji}\be_j\right|\leq d4^d v_i^{-1}=\del_d v_i^{-1}< 2^{-l_i}}
for all $1\leq i\leq s$.
\end{proof}

Denote by $\pi_1:\bR^d\to\bR$ and the orthogonal projection to the first coordinate and $\pi':\bR^d\to\bR^{d-1}$ the orthogonal projection to the remaining $(d-1)$ coordinates. Let $\Lambda=\bZ^d\transp{M_0^{-1}}$ and, for $k\in\bZ$, let
\eqlabel{lambdak}{\cR_k:=\set{\bx\in\bR^d: |\pi_1\bx|<2^{k+2}, 2^{k}\leq |\pi'\bx|<2^{k+1}},\quad \Lambda_k:=\cR_k\cap \Lambda.}
Then for sufficiently large $k$
\eqlabel{Lambdakcount}{\#\Lambda_k\asymp 2^{kd},}
where the implied constants are independent of $k$ but depend on the fixed $M_0\in G$. Throughout this section, let $K_0\in\bZ$ be the largest integer such that
$$\set{\bx\in\Lambda: |\pi_1\bx|<2^{k+1}, |\pi'\bx|<2^{k}}=\emptyset$$
for all $k\leq K_0$.
Note that $K_0$ only depends on the choice of $\Lambda$.

We define the norm $\| \;\cdot\; \|$ for the wedge product by
$$
\| \bx_1 \wedge \cdots \wedge \bx_n \|^2 = \det\left[( \bx_i \transp{\bx_j})_{ij}\right].
$$
For $\underline{k}=(k_1,\ldots,k_s)\in\bZ_{\ge K_0}^s$ and $\underline{p}=(p_1,\ldots,p_s)\in\bZ_{\ge0}^s$, we denote by $\Lambda_{\underline{k}}(\underline{p})$ the set of $(\bx_1,\ldots,\bx_s)\in\Lambda_{k_1}\times\cdots\times\Lambda_{k_s}$ such that
\eqlabel{perpsize}{0<2^{-p_j-1}\|\bx_j\| \left\|\bigwedge_{i=1}^{j-1}\bx_i\right\|<\left\|\bigwedge_{i=1}^{j}\bx_i\right\|\leq 2^{-p_j}\|\bx_j\| \left\|\bigwedge_{i=1}^{j-1}\bx_i\right\|}
for $j=1,\ldots,s$. Then any $(\bx_1,\ldots,\bx_s)\in\Lambda_{k_1}\times\cdots\times\Lambda_{k_s}$ such that $\bx_1,\ldots,\bx_s$ are $\bR$-linearly independent is contained in $\bigcup_{\underline{p}\in\bZ^s_{\ge0}}\Lambda_{\underline{k}}(\underline{p})$.

\begin{lem}\label{pcount}
For any $\underline{k}\in\bZ^s_{\ge K_0}$ and $\underline{p}\in\bZ^s_{\ge0}$, 
$$\#\Lambda_{\underline{k}}(\underline{p})\ll \prod_{j=1}^s 2^{\om_s(k_j,p_j)},$$
where 
$$\om_s(k,p)= \begin{cases}
dk-(d+1-s)p & \text{if $p\leq k$}\\
sk-p & \text{if $p>k$.}
\end{cases} 
$$
Moreover, if there exists $1\leq j\leq s$ such that $p_j\ge jk_j+K$, then $\#\Lambda_{\underline{k}}(\underline{p})=0$, where $K$ is a sufficiently large constant depending only on the choice of lattice $\Lambda$.
\end{lem}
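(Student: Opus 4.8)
The plan is to count lattice points $(\bx_1,\dots,\bx_s)\in\Lambda_{\underline k}(\underline p)$ one coordinate at a time, exploiting the nested structure of the conditions \eqref{perpsize}. Write $W_j=\bx_1\wedge\cdots\wedge\bx_j$ and let $V_j=\operatorname{span}_\bR(\bx_1,\dots,\bx_j)$ be the $j$-dimensional subspace it spans. The point of \eqref{perpsize} is that, given $\bx_1,\dots,\bx_{j-1}$ fixed (with $\|W_{j-1}\|\neq0$), the quantity $\|W_j\|/(\|\bx_j\|\,\|W_{j-1}\|)$ equals the sine of the angle between $\bx_j$ and $V_{j-1}$, equivalently $\operatorname{dist}(\bx_j,V_{j-1})/\|\bx_j\|$; so the condition $\|W_j\|\le 2^{-p_j}\|\bx_j\|\,\|W_{j-1}\|$ forces $\bx_j$ to lie within distance $\asymp 2^{-p_j}\|\bx_j\|\asymp 2^{k_j-p_j}$ of the $j$-dimensional subspace $V_{j-1}$. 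Thus the plan is: first bound the number of choices of $\bx_1\in\Lambda_{k_1}$ satisfying the $j=1$ condition (there $W_0$ is empty so \eqref{perpsize} just says $2^{-p_1-1}<1\le 2^{-p_1}$, i.e. $p_1=0$ and any $\bx_1\in\Lambda_{k_1}$ works, giving $\ll 2^{dk_1}$ by \eqref{Lambdakcount}); then, inductively, for each admissible tuple $(\bx_1,\dots,\bx_{j-1})$ bound the number of $\bx_j\in\Lambda_{k_j}$ lying in the $\asymp 2^{k_j-p_j}$-neighbourhood of the fixed subspace $V_{j-1}$. Multiplying the per-step bounds gives $\prod_j 2^{\omega_s(k_j,p_j)}$.

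The heart of the matter is the single-step count: how many points of $\Lambda_{k_j}$ — a set of $\asymp 2^{dk_j}$ lattice points in the shell $\cR_{k_j}$ of scale $2^{k_j}$ — lie in the tube of radius $\asymp 2^{k_j-p_j}$ around a fixed $j$-dimensional subspace $V_{j-1}$? I would argue by a covering/volume count. The tube, intersected with $\cR_{k_j}$, has $j$ ``long'' directions of extent $\asymp 2^{k_j}$ along $V_{j-1}$ and $d-j$ ``short'' directions of extent $\asymp 2^{k_j-p_j}$ transverse to it. Since $\Lambda$ is a fixed unimodular lattice with covolume bounded below, the number of its points in a convex box of side-lengths $\ell_1,\dots,\ell_d$ is $\ll \prod_i(\ell_i+1)$. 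When $p_j\le k_j$ all short side-lengths $2^{k_j-p_j}$ are $\ge 1$, so this gives $\ll 2^{jk_j}\cdot 2^{(d-j)(k_j-p_j)} = 2^{dk_j-(d-j)p_j}$; here we must match the exponent $\omega_s(k,p)=dk-(d+1-s)p$ for $p\le k$. The index shift from $d-j$ to $d+1-s$ is because in step $j$ of the induction the subspace has dimension $j-1$ (we are choosing the $j$-th vector, and the span of the \emph{previous} $j-1$ vectors is what $\bx_j$ must hug), so the transverse dimension is $d-(j-1)=d+1-j$, and summing telescopically we can bound each step's $p_j$-exponent by the worst case $d+1-s$; I would in fact prove the cleaner per-step bound $\ll 2^{dk_j-(d+1-j)p_j}$ and then note $d+1-j\ge d+1-s$ to absorb it into the stated $\omega_s$. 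When $p_j>k_j$ the short directions have length $2^{k_j-p_j}<1$, so the tube contains $O(1)$ lattice translates in each short direction and the box count degenerates: effectively $\bx_j$ must lie \emph{exactly} on (or within $O(1)$ of) $V_{j-1}$, leaving only the $\asymp 2^{jk_j}$-sized intersection $\Lambda\cap V_{j-1}\cap\cR_{k_j}$ — but the extra factor $2^{-p_j}$ in $\omega_s(k,p)=sk-p$ for $p>k$ cannot come from a volume count (the set is discrete). This is where the vanishing clause enters.

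The main obstacle, and the subtlest point, is the regime $p_j>k_j$ and in particular the claim that $\#\Lambda_{\underline k}(\underline p)=0$ once $p_j\ge jk_j+K$. Here is how I would handle it. If $\bx_1,\dots,\bx_j$ are $\bR$-linearly independent integer-type vectors (i.e. in the fixed lattice $\Lambda=\bZ^d\transp{M_0^{-1}}$), then $\|W_j\|=\|\bx_1\wedge\cdots\wedge\bx_j\|$ is the covolume of a $j$-dimensional sublattice of $\Lambda$, hence bounded \emph{below} by a positive constant depending only on $\Lambda$: there is $c=c(\Lambda)>0$ with $\|W_j\|\ge c$ whenever $W_j\neq0$. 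On the other hand \eqref{perpsize} telescopes to $\|W_j\|\le \prod_{i=1}^j 2^{-p_i}\|\bx_i\| \asymp \prod_{i=1}^j 2^{k_i-p_i}$, and more crudely $\|W_j\|\le 2^{-p_j}\|\bx_j\|\,\|W_{j-1}\| \ll 2^{-p_j}2^{k_j}\cdot 2^{(j-1)k_j}\le 2^{jk_j-p_j}$ using $\|W_{j-1}\|\le\prod_{i<j}\|\bx_i\|\ll 2^{(j-1)k_j}$ (since $k_i\le k_j$ is \emph{not} assumed, so instead bound $\|W_{j-1}\|\le\prod_{i<j}\|\bx_i\|$ directly and fold the $k_i$ for $i<j$ into the constant $K$, or reorganize so that the bound reads $p_j\ge jk_j+K$ with $K$ also absorbing $\sum_{i<j}k_i$ — I would state the clause with $K$ large enough to cover this). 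Combining, $c\le \|W_j\| \ll 2^{jk_j-p_j+(\text{bounded})}$, which is impossible once $p_j-jk_j$ exceeds a constant $K=K(\Lambda)$; this forces $\Lambda_{\underline k}(\underline p)=\emptyset$. Finally, for the $\omega_s$ bound in the surviving range $k_j< p_j< jk_j+K$, one uses that only boundedly many ``transverse shifts'' occur: $\bx_j$ ranges over $\Lambda\cap(\text{$O(2^{k_j-p_j}+1)$-tube around }V_{j-1})\cap\cR_{k_j}$, and since the relevant lattice $\Lambda\cap V_{j-1}$ has covolume $\asymp\|W_{j-1}\|$ bounded below, the count is $\ll 2^{jk_j}/\|W_{j-1}\| \cdot (\text{bounded number of planes})$; one then checks this is $\ll 2^{sk_j-p_j}$ in this range — again because $p_j<jk_j+K$ makes $2^{sk_j-p_j}\gg 2^{(s-j)k_j-K}$, absorbing the constant. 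I expect the bookkeeping of which $k_i$'s ($i<j$) land in constants versus exponents, and the uniform choice of $K$, to be the only genuinely fiddly part; the geometric content (tube counts plus the covolume lower bound for nonzero wedge products of lattice vectors) is robust.
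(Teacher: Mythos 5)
Your handling of the regime $p_j\le k_j$ matches the paper's: both perform a tube/volume count giving $\ll 2^{dk_j-(d+1-j)p_j}$ and then pass to $\omega_s$ via $d+1-j\ge d+1-s$. The gap is in the regime $p_j>k_j$, and in particular in the vanishing claim, where your argument does not actually prove the lemma as stated.

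You try to get the vanishing from $c\le\|W_j\|\le 2^{-p_j}\|\bx_j\|\,\|W_{j-1}\|$ together with $\|W_{j-1}\|\le\prod_{i<j}\|\bx_i\|$. This yields only $p_j\le k_j+\sum_{i<j}k_i+O(1)$, which can be far weaker than the asserted $p_j\le jk_j+O(1)$ when some $k_i$ with $i<j$ is much larger than $k_j$. You notice this and propose to ``fold $\sum_{i<j}k_i$ into $K$,'' but that cannot be done: the lemma requires $K$ to be a \emph{constant} depending only on $\Lambda$, not on $\underline{k}$, and the downstream estimate in the proof of Proposition~\ref{mainprop} uses precisely that the inner $p_i$-sum runs up to $sk_i+K$. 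The same issue infects your counting bound $\ll 2^{sk_j-p_j}$ in the range $k_j<p_j<jk_j+K$: the heuristic $2^{jk_j}/\|W_{j-1}\|\cdot O(1)$ is not controlled uniformly (an argument based on ``boundedly many lattice planes within a width-$O(1)$ tube'' is false for general unimodular lattices, where parallel rational hyperplanes can be arbitrarily densely spaced), and even granting it one gets $\ll 2^{(j+1)k_j-p_j}$, which exceeds $2^{sk_j-p_j}$ when $j=s$.

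The paper circumvents all of this by an argument that never leaves the single shell $\Lambda_{k_j}$. It takes the maximal number $j'\ge j$ of $\bR$-linearly independent vectors in $\Upsilon\cap\Lambda_{k_j}$, picks a $j'$-dimensional parallelepiped $\cQ$ generated by such vectors (so its $j'$-volume is $\gg 1$, uniformly over all choices, since these are lattice vectors), observes that the translates $\bx_j+\cQ$ are pairwise disjoint and all contained in the dilate $(j'+1)(\Upsilon\cap\cR_{k_j})$, whose $j'$-volume is $\ll 2^{jk_j-p_j}$. The packing bound then gives both the count $\ll 2^{jk_j-p_j}$ and, once $2^{jk_j-p_j}$ drops below the volume of $\cQ$, the vanishing with a $K$ that is genuinely independent of $\underline{k}$. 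The crucial feature your sketch misses is that all the vectors entering this volume comparison are at the \emph{same} scale $2^{k_j}$, so no $k_i$ with $i<j$ appears; the telescoped $\|W_{j-1}\|$ bound inevitably reintroduces them.
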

\begin{proof}
%Given $\bx_1,\ldots,\bx_{j-1}$, if $\bx_j\in\Lambda_{k_j}$ satisfies \eqref{perpsize}, then $\bx_j$ must be $\ll 2^{k_j-p_j}$-close to 
Given $\bx_1,\ldots,\bx_{j-1}$, let $V$ be the subspace spanned by $\bx_1,\ldots,\bx_{j-1}$ and denote by $\Upsilon$ the region of $\bx_j$ satisfying 
$$\left\|\bigwedge_{i=1}^{j}\bx_i\right\|\leq 2^{-p_j}\|\bx_j\| \left\|\bigwedge_{i=1}^{j-1}\bx_i\right\|.$$ Note that $\Upsilon\cap \cR_k$ has width $\asymp 2^{k_j}$ along the directions in $V$, and width $\asymp 2^{k_j-p_j}$ along the directions perpendicular to $V$. 
 
 If $p_j\leq k_j$, then the number of possible $\bx_j\in\Lambda_{k_j}$ satisfying \eqref{perpsize} is therefore at most $\ll (2^{k_j})^{j-1}(2^{k_j-p_j})^{d-(j-1)}=2^{dk_j-(d+1-j)p_j}$. 
 
In case $p_j> k_j$, let $j'$ be the maximal number of $\bR$-linearly independent vectors in $\Upsilon\cap\Lambda_{k_j}$. We may assume $j\leq j'\leq d$ since there is no $\bx_j$ satisfying \eqref{perpsize} in $\Upsilon$ otherwise. Then we can take a $j'$-dimensional parallelepiped $\mathcal{Q}$ generated by $\widebar{\bx}_1,\ldots,\widebar{\bx}_{j'}\in\Upsilon\cap\Lambda_{k_j}$ such that there is no element of $\Upsilon\cap\Lambda_{k_j}$ inside $\mathcal{Q}$. Since $\widebar{\bx}_1,\ldots,\widebar{\bx}_{j'}$ are $\bR$-linearly independent and contained in $\Lambda=\bZ^d\transp{M_0^{-1}}$, the $j'$-dimensional volume of $\mathcal{Q}\transp{M_0}$ is $\ge 1$. Hence, the $j'$-dimensional volume of $\mathcal{Q}$ is $\gg 1$ independently of $p_j$ and $\bx_1,\ldots,\bx_{j-1}$. Also, the interior of the sets $\bx_j+\mathcal{Q}$ with $\bx_j\in\Lambda_{k_j}$ are pairwise disjoint. Note that $\mathcal{Q}$ is contained in $j'(\Upsilon\cap\cR_{k_j})$ since the generators are in $\Upsilon\cap\cR_{k_j}$. Thus for any $\bx_j\in\Upsilon\cap\Lambda_{k_j}$, the set $\bx_j+\mathcal{Q}$ is contained in $(j'+1)(\Upsilon\cap\cR_{k_j})$ and the $j'$-dimensional volume of this region is 
$$\ll (j'+1)^{j'}(2^{k_j})^{j-1}(2^{k_j-p_j})^{j'-(j-1)}\ll 2^{jk_j-p_j}.$$ Because of this and the uniform lower bound on the volume of $\mathcal{Q}$, it follows that the number of possible $\bx_j\in\Lambda_{k_j}$ satisfying \eqref{perpsize} is at most $\ll 2^{jk_j-p_j}$. In particular, there is no such $\bx_j\in\Lambda_{k_j}$ if $p_j\ge jk_j+K$.
 
We have shown that for fixed $\bx_1,\ldots,\bx_{j-1}$, the number of possible $\bx_j\in\Lambda_{k_j}$ satisfying \eqref{perpsize} is $\ll 2^{\om_j(k_j,p_j)}\leq 2^{\om_s(k_j,p_j)}$. Hence the desired estimate follows.
\end{proof}

For $\underline{l}=(l_1,\ldots,l_s)\in\bZ_{\ge0}^s$ and $(\bx_1,\ldots,\bx_s)\in\Lambda^s$, let $\Om_{\underline{l}}(\bx_1,\ldots,\bx_s)$ be the set of $\by\in\cK=\supp\psi\subset[-1,1]^{d-1}$ satisfying the following two conditions:
\begin{itemize}
    \item $\be_i\transp{\gamma}(\by)\transp{M_0^{-1}}=\bx_i$ for $i=1,\ldots,s$,
    \item $|\bbeta_i(\by)|< 2^{-l_i}$ for $i=1,\ldots,s$.
\end{itemize}

\begin{lem}\label{msrbdd}
There exists $T_0\ge0$ such that the following holds for any $t>T_0$. For $\underline{l}\in\bZ_{\ge0}^s$ and $(\bx_1,\ldots,\bx_s)\in\Lambda^s$, the set $\Om_{\underline{l}}(\bx_1,\ldots,\bx_s)$ is the empty set if there exists $1\leq i\leq s$ such that $$\bx_i\notin \displaystyle\bigcup_{k=K_0}^{\lfloor\frac{t}{d\log 2}-l_i\rfloor}\Lambda_k.$$ If ${K_0\leq }k_i\leq \lfloor\frac{t}{d\log 2}-l_i\rfloor$ for all $1\leq i\leq s$ and $(\bx_1,\ldots,\bx_s)\in\Lambda_{\underline{k}}(
 \underline{p})$, then
\eqlabel{ptestim}{\operatorname{vol}_{\bR^{d-1}}(\Om_{\underline{l}}(\bx_1,\ldots,\bx_s))\ll e^{-\frac{(d-1)s}{d}t} \prod_{i=1}^s 2^{p_i-l_i-k_i}.}
\end{lem}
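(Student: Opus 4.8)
The plan is to analyze the set $\Om_{\underline{l}}(\bx_1,\ldots,\bx_s)$ by understanding how the constraint $\be_i\transp{\gamma(\by)}\transp{M_0^{-1}}=\bx_i$ pins down $\by$. Recall $M_0\widetilde{n}(\by)\Phi_t=\gamma(\by)h(\by)$, so $\transp{\gamma(\by)}=\transp{(M_0\widetilde{n}(\by)\Phi_t)}\transp{h(\by)^{-1}}$, whence
\[
\be_i\transp{\gamma(\by)}\transp{M_0^{-1}}=\be_i\transp{h(\by)^{-1}}\transp{\Phi_t}\transp{\widetilde{n}(\by)}=\bbeta_i(\by)\transp{\Phi_t}\transp{\widetilde{n}(\by)}.
\]
So the first condition in the definition of $\Om_{\underline{l}}$ reads $\bbeta_i(\by)\transp{\Phi_t}\transp{\widetilde{n}(\by)}=\bx_i$ for $i=1,\ldots,s$, and the second condition bounds $|\bbeta_i(\by)|<2^{-l_i}$. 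The first step is therefore to invert this: since $\transp{\widetilde{n}(\by)}$ is lower-triangular unipotent and $\transp{\Phi_t}$ is diagonal, each equation expresses $\bbeta_i(\by)$ in terms of $\bx_i$ and $\by$, and in particular the first coordinate $\pi_1\bx_i$ is $e^{-\frac{d-1}{d}t}$ times the first coordinate of $\bbeta_i(\by)$, while the projection $\pi'\bx_i$ equals $e^{\frac{t}{d}}$ times $\pi'(\bbeta_i(\by))$ plus a contribution of $\by$ times $\pi_1\bx_i$.

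The second step establishes the emptiness claim. From $|\bbeta_i(\by)|<2^{-l_i}$ and the relation above we get $|\pi'\bx_i|\ll e^{\frac{t}{d}}2^{-l_i}$ (the $\by\,\pi_1\bx_i$ term is lower order on $\cK\subseteq[-1,1]^{d-1}$, since $|\pi_1\bx_i|$ is exponentially smaller), which by the definition \eqref{lambdak} of $\Lambda_k$ forces $2^{k_i}\le|\pi'\bx_i|\ll e^{\frac{t}{d}}2^{-l_i}$, i.e. $k_i\le \frac{t}{d\log 2}-l_i+O(1)$; together with $\bx_i\in\Lambda$ and $k_i\ge K_0$ this gives $\bx_i\in\bigcup_{k=K_0}^{\lfloor t/(d\log2)-l_i\rfloor}\Lambda_k$ for $t$ large. (The $O(1)$ is absorbed by choosing $T_0$ and, if needed, enlarging the union by a fixed number of dyadic scales, which does not affect the stated form.)

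The third and main step is the volume bound \eqref{ptestim}. Fixing $(\bx_1,\ldots,\bx_s)\in\Lambda_{\underline{k}}(\underline{p})$, I would parametrize $\by\mapsto(\bbeta_1(\by),\ldots,\bbeta_s(\by))$ and show this map is, locally, an affine submersion onto a product of regions whose measures I can control. The key point is that the condition $(\bx_1,\ldots,\bx_s)\in\Lambda_{\underline{k}}(\underline{p})$ from \eqref{perpsize} says precisely that the component of $\bx_j$ perpendicular to $\spa\{\bx_1,\ldots,\bx_{j-1}\}$ has length $\asymp 2^{-p_j}\|\bx_j\|\asymp 2^{k_j-p_j}$; transporting this through the inversion of $\transp{\Phi_t}\transp{\widetilde{n}(\by)}$ — the diagonal $\transp{\Phi_t}$ contributing the factor $e^{-\frac{(d-1)s}{d}t}$ from the Jacobian over the $s$ copies, more precisely a factor $e^{-\frac{(d-1)}{d}t}$ per index from the first coordinate of $\bbeta_i$ being of size $e^{\frac{d-1}{d}t}|\pi_1\bx_i|$ — shows that $\by$ ranges over a set of diameter $\asymp 2^{p_j-l_j-k_j}$ in $(d-1)-\dim$-many of the directions, once the constraints for $i<j$ have been imposed. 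Iterating over $j=1,\ldots,s$ and collecting the Jacobian factors yields the product $e^{-\frac{(d-1)s}{d}t}\prod 2^{p_i-l_i-k_i}$.

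\textbf{The main obstacle} is bookkeeping the linear algebra in the third step: one must carefully track how the perpendicular-component estimate \eqref{perpsize} interacts with the unipotent part $\transp{\widetilde{n}(\by)}$ (which itself depends on $\by$), and verify that after fixing $\bx_1,\ldots,\bx_{j-1}$ — hence partially fixing $\by$ — the residual freedom in $\by$ constrained by the $j$-th equation really has the claimed size $2^{p_j-l_j-k_j}$ in the appropriate directions, uniformly in $t$. The cleanest route is probably to not invert everything at once but to peel off one index at a time, at each stage using that $\widetilde{n}(\by)$ acts triangularly so the already-fixed coordinates of $\by$ do not spoil the estimate for the next one; the factor $2^{p_j}$ is exactly the slack coming from the fact that along the perpendicular direction the relevant coordinate of $\bbeta_j$ is only $2^{k_j-p_j}$-small rather than $2^{k_j}$-small, while the $l_j$-bound on $|\bbeta_j|$ supplies the $2^{-l_j}$, and the $2^{-k_j}$ is the Jacobian of scaling $\bbeta_j$ up to $\bx_j$-size.
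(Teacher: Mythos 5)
Your overall architecture matches the paper's: extract from the pinning condition $\be_i\transp{\gamma(\by)}\transp{M_0^{-1}}=\bx_i$ the relation $\bbeta_i(\by)=\bx_i\,\transp{\widetilde n(-\by)}\,\Phi_{-t}$, read off two scalar inequalities from $|\bbeta_i(\by)|<2^{-l_i}$, use one for the emptiness claim and recognize the other as a thickened-hyperplane condition on $\by$ whose intersection volume gives \eqref{ptestim}. But there are two issues, one cosmetic and one substantive.

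The cosmetic one: you have the triangular structure backwards. Since $\transp{\widetilde n(\by)}$ is lower unipotent with $\by$ in the first \emph{column}, right-multiplication of a row vector puts the $\by$-contribution into the \emph{first} coordinate. Concretely, $\pi'\bx_i = e^{t/d}\,\pi'\bbeta_i(\by)$ exactly (no $\by$-term), while $\pi_1\bx_i = e^{-\frac{d-1}{d}t}\pi_1\bbeta_i(\by) + (\pi'\bx_i)\cdot\by$. So the $\by$-free inequality is $|\pi'\bx_i|<2^{-l_i}e^{t/d}$, and the thickened-hyperplane constraint on $\by$ is $|\pi_1\bx_i-\pi'\bx_i\cdot\by|<2^{-l_i}e^{-\frac{d-1}{d}t}$. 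Your version would give $(d-1)$ scalar constraints on $\by$ per index $i$, which is the wrong count. (Also, for emptiness you only discuss $k_i$ out of range; you must also rule out $\bx_i$ lying outside $\bigcup_k\Lambda_k$ entirely, i.e.\ $\pi'\bx_i=0$ or $|\pi_1\bx_i|>2|\pi'\bx_i|$, which follows from both inequalities together for $t$ large.)

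The substantive gap is the one you flag yourself, and it is real. The hyperplanes cut out by the first-coordinate constraint have normals $\pi'\bx_i/\|\pi'\bx_i\|\in\bR^{d-1}$, so the intersection volume is governed by $\bigl\|\bigwedge_{i\le s}\pi'\bx_i/\|\pi'\bx_i\|\bigr\|^{-1}$, i.e.\ by the independence of the \emph{projected} vectors. But the hypothesis $(\bx_1,\ldots,\bx_s)\in\Lambda_{\underline k}(\underline p)$ only controls wedges of the full vectors $\bx_i\in\bR^d$ via \eqref{perpsize}. Your plan treats the transfer from one to the other as routine bookkeeping, but it is not: $\pi'$ can collapse independent vectors. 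The paper's proof introduces auxiliary exponents $p_j'$ measuring the wedge decay of the projected unit normals and then proves $p_j'\le p_j+O(1)$, and this step genuinely uses both the hyperplane inequality \eqref{eqn1} (to carry a linear combination from the projected vectors to the full vectors) and the assumption $t\gg 1$ (to make the $e^{-\frac{d-1}{d}t}$-error subordinate to $2^{-p_j}$, which in turn uses $p_j\le\sum_{i\le j}k_i+O(1)$ from the covolume bound). Without this comparison $p_j'\le p_j+O(1)$ your bound would have $\prod 2^{p_i'}$ in place of $\prod 2^{p_i}$, and the counting lemma that follows the volume lemma would not close. So the "Jacobian" intuition gets the powers of $e^t$, $2^{-l_i}$, $2^{-k_i}$ right, but the $2^{p_i}$ factor needs the explicit argument you have not supplied.
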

\begin{proof}
For $\by\in\Om_{\underline{l}}(\bx_1,\ldots,\bx_s)$, by definition of $\bbeta_i(\by)$ we have
$$|\bx_i\transp{\widetilde{n}(-\by)}\Phi_{-t}|<2^{-l_i}$$
for $i=1,\ldots,s$. By a straightforward computation with $\bx_i=(\pi_1\bx_i,\pi'\bx_i)\in\bR\times\bR^{d-1}$, it implies that
\eqlabel{eqn1}{|\pi_1\bx_i-\pi'\bx_i\cdot\by|<2^{-l_i}e^{-\frac{d-1}{d}t},}
\eqlabel{eqn2}{|\pi'\bx_i|<2^{-l_i}e^{\frac{t}{d}}}
for $i=1,\ldots,s$.

If there exists $i$ such that $\bx_i\in\Lambda_{k_i}$ with $k_i>\frac{t}{d\log 2}-l_i$, then it contradicts \eqref{eqn2}. If there exists $i$ such that $\bx_i\notin \displaystyle\bigcup_{k=K_0}^{\infty}\Lambda_k$, then we have $|\pi_1\bx_i|>2|\pi'\bx_i|$, which also contradicts \eqref{eqn1} and \eqref{eqn2} since they imply
$$|\pi_1\bx_i|\leq |\pi'\bx_i||\by|+2^{-l_i}e^{-\frac{d-1}{d}t}< |\pi'\bx_i|+2^{K_0-1}.$$
This proves the first claim of the lemma.

Suppose now that $k_i\leq \frac{t}{d\log 2}-l_i$ for all $i=1,\ldots,s$ and $(\bx_1,\ldots,\bx_s)\in\Lambda_{\underline{k}}(\underline{p})$. To prove the estimate \eqref{ptestim}, we may assume $\Om_{\underline{l}}(\bx_1,\ldots,\bx_s)\neq\emptyset$ and pick any $\by\in\Om_{\underline{l}}(\bx_1,\ldots,\bx_s)$. Let $p_1',\ldots,p_s'\in\bZ_{\ge0}$ be the integers such that
$$2^{-p_j'-1}\left\|\bigwedge_{i=1}^{j-1} \frac{\pi'\bx_i}{\|\pi'\bx_i\|}\right\|< \left\|\bigwedge_{i=1}^j \frac{\pi'\bx_i}{\|\pi'\bx_i\|}\right\|\leq2^{-p_j'}\left\|\bigwedge_{i=1}^{j-1} \frac{\pi'\bx_i}{\|\pi'\bx_i\|}\right\|$$
for $j=1,\ldots,s$. 

We will prove that $p_j'\leq p_j+O(1)$ for all $1\leq j\leq s$. Since $\frac{\pi'\bx_i}{\|\pi'\bx_i\|}$'s are unit vectors, for each $j$ we can find $c_1,\ldots,c_{j-1}\ll 1$ such that
\eqlabel{vecdiff'}{\left\|\frac{\pi'\bx_j}{\|\pi'\bx_j\|}-\displaystyle\sum_{i=1}^{j-1}c_i\frac{\pi'\bx_i}{\|\pi'\bx_i\|}\right\|\ll 2^{-p_j'}.}
This in turn implies that, for any choice of $\by$,
\eqlabel{vecdiff''}{\left|\frac{\pi'\bx_j\cdot \by}{\|\pi'\bx_j\|}-\displaystyle\sum_{i=1}^{j-1}c_i\frac{\pi'\bx_i\cdot\by}{\|\pi'\bx_i\|}\right|\ll 2^{-p_j'} \|\by\| .} 
Now, if $\by\in\Om_{\underline{l}}(\bx_1,\ldots,\bx_s)$, then we have \eqref{eqn1}, and
using the triangle inequality and $\|\pi'\bx_i\|\gg 2^{k_i}$, it follows that
\eqlabel{vecdiff1}{\left|\frac{\pi_1\bx_j}{\|\pi'\bx_j\|}-\displaystyle\sum_{i=1}^{j-1}c_i\frac{\pi_1\bx_i}{\|\pi'\bx_i\|}\right|\ll 2^{-p_j'}+\displaystyle\sum_{i=1}^{j}2^{-l_i-k_i}e^{-\frac{d-1}{d}t}.}
Combining \eqref{vecdiff'} and \eqref{vecdiff1}, we get
\eqlabel{vecdiff}{\left\|\frac{\bx_j}{\|\pi'\bx_j\|}-\displaystyle\sum_{i=1}^{j-1}c_i\frac{\bx_i}{\|\pi'\bx_i\|}\right\|\ll 2^{-p_j'}+\displaystyle\sum_{i=1}^{j}2^{-l_i-k_i}e^{-\frac{d-1}{d}t},}
hence
\eqlabel{wedgedec}{\left\|\bigwedge_{i=1}^j \frac{\bx_i}{\|\pi'\bx_i\|}\right\|\ll\left(2^{-p_j'}+\displaystyle\sum_{i=1}^{j}2^{-l_i-k_i}e^{-\frac{d-1}{d}t}\right)\left\|\bigwedge_{i=1}^{j-1} \frac{\bx_i}{\|\pi'\bx_i\|}\right\|}
for $j=1,\ldots,s$. Since $\|\pi'\bx_i\|\asymp\|\bx_i\|$ for all $i=1,\ldots,s$ by definition of $\Lambda_{k_i}$'s, we can replace the $\|\pi'\bx_i\|$'s in \eqref{wedgedec} with $\|\bx_i\|$'s for $i=1,\ldots,j$. By \eqref{perpsize} it implies that 
\eqlabel{pjpj'}{2^{-p_j}\leq C\left(2^{-p_j'}+\displaystyle\sum_{i=1}^{j}2^{-l_i-k_i}e^{-\frac{d-1}{d}t}\right)}
for some constant $C>0$. 

On the other hand, we have
$$\prod_{i=1}^{j}2^{k_i-p_i}\gg\prod_{i=1}^{j}2^{-p_i}\prod_{i=1}^{j}\|\bx_j\|\asymp \left\|\bigwedge_{i=1}^{j}\bx_i\right\|\ge 1,$$
hence $p_j\leq \sum_{i=1}^{j}p_i\leq \sum_{i=1}^{j}k_i+O(1)$ for any $1\leq j\leq s$.
 It follows that $2^{-p_j}\gg 2^{-k_i}e^{-\frac{j-1}{d}t}\geq 2^{-k_i}e^{-\frac{d-2}{d}t}$ for all $1\leq i\leq j$, so for $C$ as above,
 \eqlabel{pjcond}{2^{-p_j} \ge 2C\displaystyle\sum_{i=1}^{j}2^{-l_i-k_i}e^{-\frac{d-1}{d}t}}
 holds for sufficiently large $t$. Combining with \eqref{pjpj'}, we have $p_j'\leq p_j+O(1)$ for all $1\leq j\leq s$.

 For each $i$, the set of $\by\in\cK$ satisfying \eqref{eqn1} is a $2^{-l_i}e^{-\frac{d-1}{d}t}\|\pi'\bx_i\|^{-1}$-thickened hyperplane in $\bR^{d-1}$ which is perpendicular to $\pi'\bx_i$. Therefore, $\operatorname{vol}_{\bR^{d-1}}(\Om_{\underline{l}}(\bx_1,\ldots,\bx_s))$ is the volume of the intersection of such $s$-number of hyperplanes and the compact set $\cK$. The intersection has width $\ll2^{-l_i}e^{-\frac{d-1}{d}t}\|\pi'\bx_i\|^{-1}$ along the direction of $\pi'\bx_i$ for $1\leq i\leq s$. It follows that the volume of the intersection is bounded above by
\eq{\begin{aligned}
&\ll \left\|\bigwedge_{i=1}^s \frac{\pi'\bx_i}{\|\pi'\bx_i\|}\right\|^{-1}\prod_{i=1}^s (2^{-l_i}e^{-\frac{d-1}{d}t}\|\pi'\bx_i\|^{-1})\\
&\ll \left(\prod_{i=1}^{s}2^{-p_i'}\right)^{-1}\prod_{i=1}^s (2^{-l_i}e^{-\frac{d-1}{d}t}\|\bx_i\|^{-1})\ll e^{-\frac{(d-1)s}{d}t}\prod_{i=1}^{s}2^{p_i-l_i-k_i}.
\end{aligned}}

\end{proof}
\begin{proof}[Proof of Proposition \ref{mainprop} under (B1)]
By Lemma \ref{betaprod},
\eqlabel{betabdd}{\begin{aligned}
\operatorname{vol}_{\bR^{d-1}}\left(\set{\by\in\cK:h(\by)\in\Xi_{\underline{l}}^s}\right)\leq \operatorname{vol}_{\bR^{d-1}}\left(\set{\by\in\cK:|\bbeta_i(\by)|<2^{-l_i}(i=1,\ldots,s)}\right).
\end{aligned}}
For $\underline{l}=(l_1,\ldots,l_s)\in\bZ_{\ge0}^s$, by Lemma \ref{msrbdd} we have
\eq{\begin{aligned}
&\operatorname{vol}_{\bR^{d-1}}\left(\set{\by\in\cK: |\bbeta_i(\by)|<2^{-l_i}(i=1,\ldots,s)}\right)\\
&=\displaystyle\sum_{(\bx_1,\ldots,\bx_s)\in\Lambda^s}\operatorname{vol}_{\bR^{d-1}}(\Om_{\underline{l}}(\bx_1,\ldots,\bx_s))\\
&=\displaystyle\sum_{k_1=K_0}^{\lfloor\frac{t}{d\log 2}-l_1\rfloor}\cdots\displaystyle\sum_{k_s=K_0}^{\lfloor\frac{t}{d\log 2}-l_s\rfloor}\displaystyle\sum_{\underline{p}\in\bZ_{\ge0}^s}\displaystyle\sum_{(\bx_1,\ldots,\bx_s)\in\Lambda_{\underline{k}}(\underline{p})}\operatorname{vol}_{\bR^{d-1}}(\Om_{\underline{l}}(\bx_1,\ldots,\bx_s))\\
&\ll\displaystyle\sum_{k_1=K_0}^{\lfloor\frac{t}{d\log 2}-l_1\rfloor}\cdots\displaystyle\sum_{k_s=K_0}^{\lfloor\frac{t}{d\log 2}-l_s\rfloor}\displaystyle\sum_{\underline{p}\in\bZ_{\ge0}^s}\#\Lambda_{\underline{k}}(\underline{p})e^{-\frac{(d-1)s}{d}t}\prod_{i=1}^{s}2^{p_i-l_i-k_i}.\end{aligned}}
Using the estimate of $\#\Lambda_{\underline{k}}(\underline{p})$ in Lemma \ref{pcount}, we get

\eq{\begin{aligned}
&\ll e^{-\frac{(d-1)s}{d}t}\displaystyle\prod_{i=1}^s\left(\sum_{k_i={K_0}}^{\lfloor\frac{t}{d\log 2}-l_i\rfloor}\left(\displaystyle\sum_{p_i=0}^{k_i}2^{dk_i-(d-s+1)p_i}2^{p_i-l_i-k_i}+\displaystyle\sum_{p_i=k_i+1}^{sk_i+K}2^{sk_i-p_i}2^{p_i-l_i-k_i}\right)\right)\\
&\ll e^{-\frac{(d-1)s}{d}t}\displaystyle\prod_{i=1}^s\left(\sum_{k_i=K_0}^{\lfloor\frac{t}{d\log 2}-l_i\rfloor} \left(2^{(d-1)k_i-l_i}+(sk_i+K)2^{(s-1)k_i-l_i}\right)\right)\\ &\ll e^{-\frac{(d-1)s}{d}t}\displaystyle\prod_{i=1}^s\left(\sum_{k_i=K_0}^{\lfloor\frac{t}{d\log 2}-l_i\rfloor}2^{(d-1)k_i-l_i}\right)\ll 2^{-d\sum l_i}.
\end{aligned}}
Hence combining with \eqref{Xidecomp}, for any $t,R>0$ and $\eta<d$ we obtain
\eqlabel{measurem}{\begin{aligned}
&\left|\int_{\by\in\cK}F_{R,\eta,r}(\Gamma'(1,\bxi)M_0\widetilde{n}(\by)\Phi_t)d\by\right|\\
&\ll \displaystyle\sum_{s=1}^{d-1}\displaystyle\sum_{l=\lfloor\log_2 R\rfloor}^{\infty} l^{s-1}2^{-(d-\eta) l}\ll R^{-(d-\eta+o(1))},
\end{aligned}}
which completes the proof.
\end{proof}

\begin{proof}[Proof of Proposition \ref{mainprop} under (B2)]
Recall that $\Gamma'(1,\bxi)M_0\widetilde{n}(\by)\Phi_t$ in \eqref{nonescape} is expressed as
$$\Gamma'(1,\bxi)M_0\widetilde{n}(\by)\Phi_t=\Gamma'(1,\bxi\gamma(\by))\, h(\by).$$

For $1\leq s\leq d-1$  and $\underline{l}=(l_1,\ldots,l_s)\in\bZ_{\ge0}^s$ with $l_1+\cdots+l_s=l$, we define $\Xi^s_{\underline{l}}\subseteq\cF$ as in \eqref{Xilsdef} and $\Gamma_{\underline{l}}^{s,r}\subseteq\Gamma$ by
\eqlabel{Xixi0def}{\begin{aligned}
\Gamma_{\underline{l}}^{s,r}&:=\set{\gamma\in\Gamma: |(\bxi\gamma)\cdot \be_i|_{\bZ}\leq \del_{d,r}2^{-l_i}(i=1,\ldots,s)}
%&=\set{\gamma\in\Gamma: |\bxi\cdot \be_i\transp{\gamma}|_{\bZ}\leq \del_{d,r}2^{-l_i}(i=1,\ldots,s)},
\end{aligned}}
where $\delta_{d,r}=\frac{c_d r}{\delta_d}$. 
%Note that if $h(\by)\in\Xi_{\underline{l}}^s$ and $\gamma(\by)\in \Gamma_{\underline{l}}^{s,r}$, then
%$$|h(\by)||\bxi\gamma(\by)\cdot \be_i|=|h(\by)||\bxi\cdot \be_i\transp{\gamma(\by)}|\leq \delta_d2^{l_i+1}\delta_{d,r}2^{-l_i}\leq c_dr.$$
For $g=(1,\bxi \gamma(\by))h(\by)$ with $h(\by)\in\Xi_{\underline{l}}^s$ we have
\begin{align*}
F_{R,\eta,r}(\Gamma'(1,\bxi)M_0\widetilde{n}(\by)\Phi_t)&=F_{R,\eta,r}(\Gamma'(1,\bxi \gamma(\by))h(\by))\\
&\leq \delta_d^\eta2^{\eta l}\chi_{[R,\infty)}\big(\delta_d^\eta2^{\eta l}\big)\prod_{i=1}^{s}\chi_{[-\delta_{d,r} 2^{-l_i},\delta_{d,r} 2^{-l_i}]}\big(b_i(g)\big)\end{align*}
since, by construction, we have
$v_i(g) \geq  \delta_d 2^{l_i}$ and thus $\frac{c_d r}{v_i(g)} \leq  \delta_{d,r} 2^{-l_i}$.
Since $|b_i(g)|=|(\bxi\gamma)\cdot \be_i|_{\bZ}$, the integral in \eqref{nonescape} is bounded by
\eqlabel{Xidecomp'}{
\begin{aligned}
&\left|\int_{\by\in\cK}F_{R,\eta,r}(\Gamma'(1,\bxi)M_0\widetilde{n}(\by)\Phi_t)d\by\right|\\
&\ll\displaystyle\sum_{s=1}^{d-1}\displaystyle\sum_{l=\lfloor\log_2 R\rfloor}^{\infty} 2^{\eta l}\displaystyle\sum_{\substack{\underline{l}=(l_1,\ldots,l_s)\in\bZ^s_{\ge0} \\ l_1+\cdots+l_s=l}} \operatorname{vol}_{\bR^{d-1}}\left(\set{\by\in\cK: h(\by)\in\Xi_{\underline{l}}^s, \gamma(\by)\in \Gamma_{\underline{l}}^{s,r}} \right).\end{aligned}}
This is the required improvement on \eqref{Xidecomp}.

The remaining task is thus to estimate the measure of the set of $\by\in\cK$ such that $h(\by)\in\Xi_{\underline{l}}^s$ and $\gamma(\by)\in\Gamma_{\underline{l}}^{s,r}$. Recall that $\bZ^d=\Lambda \transp{M_0}$. From now on we fix $r>0$ as in \eqref{etar} and no longer record the implicit dependence of constants on this parameter. For $\underline{k}\in\bZ_{\ge K_0}^s$, $\underline{l}\in\bZ_{\ge0}^s$, and $\underline{p}\in\bZ_{\ge0}^s$, we denote by $\Lambda_{\underline{k}}^{\underline{l}}(\underline{p})$ the set of elements in $(\bx_1,\ldots,\bx_s)\in\Lambda_{\underline{k}}(\underline{p})$ satisfying
\eqlabel{xiDio}{|\bxi\cdot (\bx_i\transp{M_0})|_{\bZ}\leq \delta_{d,r}2^{-l_i}}
for all $1\leq i\leq s$. As we counted the number of lattice points of $\Lambda_{\underline{k}}(\underline{p})$ in Lemma \ref{pcount}, here we count the number of lattice points of $\Lambda_{\underline{k}}^{\underline{l}}(\underline{p})$ as follows.

%Recall that we defined the function $ \zeta:\bR^d\times\bR_{>0}\to\bN$ in \eqref{zetadef}.

\begin{lem}\label{pcount'}
For $\bxi\in\bR^d$ let
$$\om_{s,\bxi}(k,p,l)= \begin{cases}
dk-(d+1-s)p-d\log_2\frac{\zeta(\bxi,2^{l-1})}{4\|M_0\|} & \text{if $p\leq k-\log_2\frac{\zeta(\bxi,2^{l-1})}{4\|M_0\|}$,}\\
(s-1)k-(s-1)\log_2\frac{\zeta(\bxi,2^{l-1})}{4\|M_0\|} & \text{if $0\leq k-\log_2\frac{\zeta(\bxi,2^{l-1})}{4\|M_0\|}<p\leq k$,}\\
sk-p & \text{if $p>k$}.
\end{cases}
$$
For any $\underline{k}\in\bZ_{\ge K_0}^s$, $\underline{l}\in\bZ_{\ge0}^s$, and $\underline{p}\in\bZ^s_{\ge0}$,
$$\#\Lambda_{\underline{k}}^{\underline{l}}(\underline{p})\ll \prod_{j=1}^{s}2^{\om_{s,\bxi}(k_j,p_j,l_j)}.$$
Moreover, if there exists $1\leq j\leq s$ such that $p_j\ge jk_j+K$ or $k_j<\log_2\frac{\zeta(\bxi,2^{l-1})}{4\|M_0\|}$, then $\#\Lambda_{\underline{k}}(\underline{p})=0$. Here, $K$ is a sufficiently large constant depending on the choice of lattice $\Lambda$.
\end{lem}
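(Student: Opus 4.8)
The plan is to rerun the argument of Lemma~\ref{pcount}, superimposing on it a gap principle extracted from the definition \eqref{zetadef} of $\zeta(\bxi,\cdot)$.

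\emph{Reduction to $\bZ^d$ and the gap principle.} For $(\bx_1,\dots,\bx_s)\in\Lambda_{\underline k}^{\underline l}(\underline p)$ write $\bm_i:=\bx_i\transp{M_0}\in\bZ^d$; since $\bx_i\in\Lambda_{k_i}$ forces $|\pi'\bx_i|\ge 2^{k_i}>0$ we have $\bm_i\ne 0$, and $|\bm_i|\ll\|M_0\|2^{k_i}$ (the implied constant reflected in the factor $4\|M_0\|$ below). Condition \eqref{xiDio} reads $|\bxi\cdot\bm_i|_{\bZ}\le\delta_{d,r}2^{-l_i}$. If $\bm\ne\bm'$ in $\bZ^d$ both satisfy $|\bxi\cdot\bm|_{\bZ}\le\epsilon$, $|\bxi\cdot\bm'|_{\bZ}\le\epsilon$, then $|\bxi\cdot(\bm-\bm')|_{\bZ}\le 2\epsilon$, so \eqref{zetadef} gives $|\bm-\bm'|\ge\zeta(\bxi,(2\epsilon)^{-1})$. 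With $\epsilon=\delta_{d,r}2^{-l_i}$ the solution set $S_i:=\{\bn\in\bZ^d:|\bxi\cdot\bn|_{\bZ}\le\delta_{d,r}2^{-l_i}\}$ is thus $\zeta(\bxi,2^{l_i}/(2\delta_{d,r}))$‑separated; up to the harmless replacement of $2^{l_i}/(2\delta_{d,r})$ by $2^{l_i-1}$ in the argument of $\zeta$ (and a shift of index and absolute constant in the series $\sum_l l^\rho2^\mu\zeta(\bxi,2^{l-1})^{-\nu}$), I will write $Z_i:=\zeta(\bxi,2^{l_i-1})/(4\|M_0\|)$ for the effective threshold, so that, up to absolute constants, $S_i$ is $4\|M_0\|Z_i$‑separated. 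The emptiness claims follow at once: if such a $\bm_i$ exists then $4\|M_0\|2^{k_i}\gg|\bm_i|\ge 4\|M_0\|Z_i$, forcing $k_i\ge\log_2 Z_i=\log_2\frac{\zeta(\bxi,2^{l-1})}{4\|M_0\|}$; and $p_j\ge jk_j+K$ already yields $\Lambda_{\underline k}(\underline p)=\emptyset$ by Lemma~\ref{pcount}, hence $\Lambda_{\underline k}^{\underline l}(\underline p)=\emptyset$.

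\emph{Counting for fixed $\bx_1,\dots,\bx_{j-1}$.} As in the proof of Lemma~\ref{pcount}, with $V=\spa(\bx_1,\dots,\bx_{j-1})$ the $\bx_j$ satisfying \eqref{perpsize} fill a convex subset of the region $\Upsilon\cap\cR_{k_j}$ from that proof, comparable to a box with $j-1$ sides $\asymp 2^{k_j}$ along $V$ and $d-j+1$ sides $\asymp 2^{k_j-p_j}$ transverse to $V$; since $\transp{M_0}$ preserves volume and distorts widths by $\ll\|M_0\|$ (absorbed into $4\|M_0\|$), the corresponding $\bm_j$ lie in a convex region $B$ of the same shape, and it remains to bound $\#(B\cap\bZ^d\cap S_j)$. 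Write $Z:=Z_j$ and split into the three ranges of $\om_{s,\bxi}$. If $p_j\le k_j-\log_2 Z$, every width of $B$ is $\gtrsim Z$, so a standard ball‑packing argument gives $\#(B\cap\bZ^d\cap S_j)\ll\operatorname{vol}(B)Z^{-d}\asymp 2^{dk_j-(d+1-j)p_j}Z^{-d}\le 2^{\om_{s,\bxi}(k_j,p_j,l_j)}$, the last step using $j\le s$ and $p_j\ge0$ exactly as in Lemma~\ref{pcount}. If $0\le k_j-\log_2 Z<p_j\le k_j$ (after merging the bounded boundary zone into the previous case, so that $2^{k_j-p_j}\le\tfrac12 Z\le 2^{k_j}$), then $B$ lies within $\ll 2^{k_j-p_j}\le\tfrac12 Z$ of the $(j-1)$‑plane $V':=V\transp{M_0}$; for distinct $\bm,\bm'\in B\cap\bZ^d\cap S_j$ the difference lies in $\{\bn\in\bZ^d:|\bxi\cdot\bn|_{\bZ}\le 2\delta_{d,r}2^{-l_j}\}$, so $|\bm-\bm'|\gg Z$ while its component orthogonal to $V'$ is $\ll 2^{k_j-p_j}\le\tfrac12 Z$, whence $|\pi_{V'}(\bm-\bm')|\gg Z$, where $\pi_{V'}\colon\bR^d\to V'$ is the orthogonal projection. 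Thus $\pi_{V'}$ embeds $B\cap\bZ^d\cap S_j$ as a $\gg Z$‑separated subset of the convex body $\pi_{V'}(B)\subset V'$, whose $j-1$ widths are $\asymp 2^{k_j}\gtrsim Z$, so $\#(B\cap\bZ^d\cap S_j)\ll\operatorname{vol}_{j-1}(\pi_{V'}(B))Z^{-(j-1)}\asymp 2^{(j-1)k_j}Z^{-(j-1)}\le 2^{(s-1)k_j}Z^{-(s-1)}=2^{\om_{s,\bxi}(k_j,p_j,l_j)}$, using $j\le s$ and $k_j\ge\log_2 Z$. Finally if $p_j>k_j$ I would discard \eqref{xiDio} and quote the bound $2^{jk_j-p_j}\le 2^{sk_j-p_j}=2^{\om_{s,\bxi}(k_j,p_j,l_j)}$ directly from Lemma~\ref{pcount}.

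Iterating over $j=1,\dots,s$ and summing over $\bx_1,\dots,\bx_s$ exactly as in Lemma~\ref{pcount} then gives $\#\Lambda_{\underline k}^{\underline l}(\underline p)\ll\prod_{j=1}^s 2^{\om_{s,\bxi}(k_j,p_j,l_j)}$. I expect the main obstacle to be the middle range: verifying that the solutions of the Diophantine inequality stay separated after projecting onto the subspace $V'$ spanned by $\bm_1,\dots,\bm_{j-1}$, and keeping careful track of the absolute constants (from $\transp{M_0}$, from $\delta_{d,r}$, and from \eqref{zetadef}) so that they combine into the displayed factor $4\|M_0\|$ and the argument $2^{l-1}$ of $\zeta$. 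A convenient consistency check is that the first two formulas for $\om_{s,\bxi}$ agree on the hyperplane $p=k-\log_2 Z$, where both equal $(s-1)p$.
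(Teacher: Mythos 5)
Your proof is correct and follows essentially the same route as the paper's: the common core is the gap principle that distinct integer solutions of $|\bxi\cdot\bn|_{\bZ}\le\delta_{d,r}2^{-l}$ are separated by at least $\zeta(\bxi,\delta_{d,r}^{-1}2^{l-1})$, combined with the shape of the region from Lemma~\ref{pcount} (thin in the $d-j+1$ directions transverse to $V$, width $\asymp 2^{k_j}$ along $V$), and the same case split by the size of $2^{k_j-p_j}$ relative to $\zeta/\|M_0\|$. The only technical difference is in the middle range: the paper covers the region by $O\bigl(2^{(j-1)k_j}\zeta^{-(j-1)}\bigr)$ cubes of sidelength $\|M_0\|^{-1}\zeta$ and observes that each cube contains at most one admissible point, whereas you project to $V'=V\transp{M_0}$ and argue that the separation survives projection because the transverse component of any difference is dominated by $\zeta$. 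Both arguments give the identical count, and the covering argument is arguably slightly cleaner since it treats the first two ranges uniformly (in the first range you need $\ll\operatorname{vol}(B)\zeta^{-d}$ cubes, in the second $\ll 2^{(j-1)k_j}\zeta^{-(j-1)}$), but your version is sound -- one just has to, as you anticipate, absorb the boundary zone where $2^{k_j-p_j}$ is comparable to $\zeta/\|M_0\|$ into the adjacent case, and use $\|M_0\|_{\mathrm{op}}\ge 1$ (from $\det M_0=1$) to keep the constants in the $\zeta$-separation dominant over the transverse width. Your emptiness arguments and the $p_j>k_j$ case (cited verbatim from Lemma~\ref{pcount}) match the paper as well.
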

\begin{proof}
For $(\bx_1,\ldots,\bx_s)\in\Lambda_{\underline{k}}^{\underline{l}}(p)\subseteq\Lambda_{\underline{k}}(p)$, recall that
\eqlabel{perpsize'}{\left\|\bigwedge_{i=1}^{j}\bx_i\right\|\leq 2^{-p_j}\|\bx_j\|\left\|\bigwedge_{i=1}^{j-1}\bx_i\right\|}
for $j=1,\ldots,s$. Given $\bx_1,\ldots,\bx_{j-1}$, in the proof of Lemma \ref{pcount} we already showed that the possible number of $\bx_j\in\Lambda_{k_j}$ is $\ll 2^{\om_s(k_j,p_j)}=2^{sk_j-p_j}$ if $p_j>k_j$. Hence, it is enough to show that this bound can be improved under the assumption $p_j\leq k_j$.

We first consider the case $p_j\leq k_j-\log_2\frac{\zeta(\bxi,2^{l-1})}{4\|M_0\|}$. In this case, if $\bx_j\in\Lambda_{k_j}$ satisfies \eqref{perpsize'}, then $\bx_j$ must be $\ll 2^{k_j-p_j}$-close to the subspace $V$ spanned by $\bx_1,\ldots,\bx_{j-1}$. Hence, the region of $\bx_j$ satisfying \eqref{perpsize'} has width $2^{k_j-p_j}$ along the directions perpendicular to $V$, and width $2^{k_j}$ along the directions of $V$. This region can be covered with at most $$\ll2^{dk_j-(d+1-j)p_j}\zeta(\bxi,\delta_{d,r}^{-1}2^{l_j-1})^{-d}\ll 2^{dk_j-(d+1-j)p_j}\zeta(\bxi,2^{l_j-1})^{-d}$$ cubes with sidelength $\|M_0\|^{-1}\zeta(\bxi,\delta_{d,r}^{-1}2^{l_j-1})$. 

We claim that there is at most one point of $\Lambda_{k_j}$ satisfying \eqref{xiDio} in each cube with sidelength $\|M_0\|^{-1}\zeta(\bxi,\delta_{d,r}^{-1}2^{l_j-1})$. To see this, suppose that there are two distinct points $\bx,\bx'\in\Lambda_{k_j}$ with distance $< \|M_0\|^{-1}\zeta(\bxi,\delta_{d,r}^{-1}2^{l_j-1})$ satisfying \eqref{xiDio}. Then we have $|\bxi\cdot(\bx\transp{M_0}-\bx'\transp{M_0})|_\bZ\leq \delta_{d,r}2^{-l_j+1}$
and $$0<|\bx\transp{M_0}-\bx'\transp{M_0}|\leq\|M_0\||\bx-\bx'|<\zeta(\bxi,\delta_{d,r}^{-1}2^{l_j-1}).$$
However, by definition \eqref{zetadef} there is no $\bm\in\bZ^d\setminus\set{0}$ with $|\bm|<\zeta(\bxi,\delta_{d,r}^{-1}2^{l_j-1})$ and $|\bxi\cdot\bm|_\bZ\leq \delta_{d,r}2^{-l_j+1}$. Hence the claim is proved. It follows from the claim that the number of possible $\bx_j\in\Lambda_{k_j}$ satisfying \eqref{xiDio} and \eqref{perpsize'} is at most $\ll2^{dk_j-(d+1-j)p_j}\left(\frac{\zeta(\bxi,2^{l_j-1})}{4\|M_0\|}\right)^{-d}= 2^{\om_{s,\bxi}(k_j,p_j,l_j)}$. 

We now consider the case $k_j-\log_2\frac{\zeta(\bxi,2^{l-1})}{4\|M_0\|}<p_j\leq k_j$. In this case, the region of $\bx_j$ satisfying \eqref{perpsize'} can be covered with at most 
$$\ll 2^{(j-1)k_j}\zeta(\bxi,\delta_{d,r}^{-1}2^{l_j-1})^{-(j-1)}\ll2^{(j-1)k_j}\zeta(\bxi,2^{l_j-1})^{-(j-1)}$$ cubes with sidelength $\|M_0\|^{-1}\zeta(\bxi,\delta_{d,r}^{-1}2^{l_j-1})$ since this region has width $2^{k_j-p_j}< \frac{\zeta(\bxi,\delta_{d,r}^{-1}2^{l_j-1})}{4\|M_0\|}$ along the directions perpendicular to $V$ and width $2^{k_j}$ along the directions of $V$. Similar to the previous case, the number of possible $\bx_j\in\Lambda_{k_j}$ satisfying \eqref{xiDio} and \eqref{perpsize'} is at most 
$$\ll\left(2^{k_j}\left(\frac{\zeta(\bxi,2^{l_j-1})}{4\|M_0\|}\right)^{-1}\right)^{j-1}\leq \left(2^{k_j}\left(\frac{\zeta(\bxi,2^{l_j-1})}{4\|M_0\|}\right)^{-1}\right)^{s-1}= 2^{\om_{s,\bxi}(k_j,p_j,l_j)}.$$ 

We have shown that for fixed $\bx_1,\ldots,\bx_{j-1}$, the number of possible $\bx_j\in\Lambda_{k_j}$ satisfying \eqref{xiDio} and \eqref{perpsize'} is $\ll 2^{\om_{s,\bxi}(k_j,p_j,l_j)}$. Hence we obtain the desired estimate.

As we have shown in Lemma \ref{pcount}, $\#\Lambda_{\underline{k}}(\underline{p})=0$ if there exists $1\leq j\leq s$ such that $p_j\geq jk_j+K$. On the other hand, if there exists $1\leq j\leq s$ such that $k_j<\log_2\frac{\zeta(\bxi,2^{l-1})}{4\|M_0\|}$, then we have $|\bx_j\transp{M_0}|\leq \|M_0\|2^{k_j+2}< \zeta(\bxi,2^{l-1})$ since $\bx_j\in\Lambda_{k_j}$. It follows that $|\bxi\cdot (\bx_j\transp{M_0})|>2^{-l+1}$ by definition of $\zeta(\bxi,T)$. In other words, there is no $\bx_j\in\Lambda_{k_j}$ satisfying \eqref{xiDio}, hence $\#\Lambda_{\underline{k}}(\underline{p})=0$.
\end{proof}

The rest of the argument is similar to the case (B1). 
By Lemma \ref{betaprod} and Lemma \ref{msrbdd} we have 
\eq{\begin{aligned}
&\operatorname{vol}_{\bR^{d-1}}\left(\set{\by\in\cK: h(\by)\in\Xi_{\underline{l}}^s, \gamma(\by)\in\Gamma_{\underline{l}}^{s,r}}\right)\\
&\leq \operatorname{vol}_{\bR^{d-1}}\left(\set{\by\in\cK: |\bbeta_i(\by)|<2^{-l_i}, |\bxi\cdot \be_i\transp{\gamma(\by)}|_{\bZ}< \delta_{d,r}2^{-l_i}(i=1,\ldots,s)}\right)\\
&\leq\displaystyle\sum_{k_1=K_0}^{\lfloor\frac{t}{d\log 2}-l_1\rfloor}\cdots\displaystyle\sum_{k_s=K_0}^{\lfloor\frac{t}{d\log 2}-l_s\rfloor}\displaystyle\sum_{\underline{p}\in\bZ_{\ge0}^s}\displaystyle\sum_{(\bx_1,\ldots,\bx_s)\in\Lambda_{\underline{k}}^{\underline{l}}(\underline{p})}\operatorname{vol}_{\bR^{d-1}}(\Om_{\underline{l}}(\bx_1,\ldots,\bx_s))\\
&\ll\displaystyle\sum_{k_1=K_0}^{\lfloor\frac{t}{d\log 2}-l_1\rfloor}\cdots\displaystyle\sum_{k_s=K_0}^{\lfloor\frac{t}{d\log 2}-l_s\rfloor}\sum_{\underline{p}\in\bZ_{\ge0}^s}\#\Lambda_{\underline{k}}^{\underline{l}}(\underline{p})e^{-\frac{(d-1)s}{d}t}\prod_{i=1}^{s}2^{p_i-l_i- k_i}\end{aligned}}
for $\underline{l}=(l_1,\ldots,l_s)\in\bZ_{\ge0}^s$. Using the estimate of $\#\Lambda_{\underline{k}}^{\underline{l}}(\underline{p})$ in Lemma \ref{pcount'}, we obtain

\eqlabel{Ombdd''}{\begin{aligned}
&\operatorname{vol}_{\bR^{d-1}}\left(\set{\by\in\cK: h(\by)\in\Xi_{\underline{l}}^s, \gamma(\by)\in\Gamma_{\underline{l}}^{s,r}}\right)\\
&\ll e^{-\frac{(d-1)s}{d}t}\prod_{i=1}^{s}\left(\displaystyle\sum_{k_i=K_0}^{\lfloor\frac{t}{d\log 2}-l_i\rfloor}\displaystyle\sum_{p_i=0}^{sk_i+K}2^{\om_{s,\bxi}(k_i,p_i,l_i)}2^{p_i-l_i-k_i}\right)\\
&=e^{-\frac{(d-1)s}{d}t}\prod_{i=1}^{s}\left(\displaystyle\sum_{k_i=\lceil \log_2\frac{\zeta(\bxi,2^{l_i-1})}{4\|M_0\|}\rceil}^{\lfloor\frac{t}{d\log 2}-l_i\rfloor}\displaystyle\sum_{p_i=0}^{sk_i+K}2^{\om_{s,\bxi}(k_i,p_i,l_i)}2^{p_i-l_i-k_i}\right).
\end{aligned}}

We may assume 
\eqlabel{lbound}{\frac{t}{d\log 2}-l_i>\log_2\frac{\zeta(\bxi,2^{l_i-1})}{4\|M_0\|}}
for all $i$ since otherwise the product over $i$ is zero.

We split the double sum in the last line of \eqref{Ombdd''} as follows,
\eq{\begin{aligned}
&\displaystyle\sum_{k_i=\lceil \log_2\frac{\zeta(\bxi,2^{l_i-1})}{4\|M_0\|}\rceil}^{\lfloor\frac{t}{d\log 2}-l_i\rfloor}\displaystyle\sum_{p_i=0}^{sk_i+K}2^{\om_{s,\bxi}(k_i,p_i,l_i)}2^{p_i-l_i-k_i}\\&=\displaystyle\sum_{k_i=\lceil \log_2\frac{\zeta(\bxi,2^{l_i-1})}{4\|M_0\|}\rceil}^{\lfloor\frac{t}{d\log 2}-l_i\rfloor}\left(\displaystyle\sum_{p_i=0}^{k_i-\lfloor \log_2\frac{\zeta(\bxi,2^{l_i-1})}{4\|M_0\|}\rfloor-1}2^{(d-1)k_i-(d-s)p_i-l_i-d\log_2\frac{\zeta(\bxi,2^{l_i-1})}{4\|M_0\|}}\right. \\&\left.\qquad\qquad+\displaystyle\sum_{p_i=k_i-\lfloor \log_2\frac{\zeta(\bxi,2^{l_i-1})}{4\|M_0\|}\rfloor}^{k_i}2^{(s-2)k_i+p_i-l_i-(s-1)\log_2\frac{\zeta(\bxi,2^{l_i-1})}{4\|M_0\|}}+\displaystyle\sum_{p_i=k_i+1}^{sk_i+K}2^{(s-1)k_i-l_i}\right) .
\end{aligned}}
This is bounded above by
\eq{\begin{aligned}
&\ll \displaystyle\sum_{k_i=\lceil \log_2\frac{\zeta(\bxi,2^{l_i-1})}{4\|M_0\|}\rceil}^{\lfloor\frac{t}{d\log 2}-l_i\rfloor}\big(2^{(d-1)k_i-l_i}\zeta(\bxi,2^{l_i-1})^{-d}+2^{(s-1)k_i-l_i}\zeta(\bxi,2^{l_i-1})^{-(s-1)}+(sk_i+K)2^{(s-1)k_i-l_i}\big)\\
&\ll \displaystyle\sum_{k_i=\lceil \log_2\frac{\zeta(\bxi,2^{l_i-1})}{4\|M_0\|}\rceil}^{\lfloor\frac{t}{d\log 2}-l_i\rfloor}\big(2^{(d-1)k_i-l_i}\zeta(\bxi,2^{l_i-1})^{-d}+2^{(s-1)k_i-l_i}\zeta(\bxi,2^{l_i-1})^{-(s-1)}+k_i2^{(d-2)k_i-l_i}\big)\\&\ll e^{\frac{d-1}{d}t}2^{-dl_i}\zeta(\bxi,2^{l_i-1})^{-d}+e^{\frac{s-1}{d}t}2^{-sl_i}\zeta(\bxi,2^{l_i-1})^{-(s-1)}+te^{\frac{d-2}{d}t}2^{-(d-1)l_i}.
\end{aligned}}
In summary, we have established that
\eqlabel{sum1'}{\begin{aligned}
&e^{-\frac{d-1}{d}t}\displaystyle\sum_{k_i=\lceil \log_2\frac{\zeta(\bxi,2^{l_i-1})}{4\|M_0\|}\rceil}^{\lfloor\frac{t}{d\log 2}-l_i\rfloor}\displaystyle\sum_{p_i=0}^{sk_i+K}2^{\om_{s,\bxi}(k_i,p_i,l_i)}2^{p_i-l_i-k_i}\\
&\ll 2^{-dl_i}\zeta(\bxi,2^{l_i-1})^{-d}+e^{-\frac{d-s}{d}t}2^{-sl_i}\zeta(\bxi,2^{l_i-1})^{-(s-1)}+te^{-\frac{t}{d}}2^{-(d-1)l_i}\\
&\ll  2^{-dl_i}\zeta(\bxi,2^{l_i-1})^{-d}+2^{-dl_i}\zeta(\bxi,2^{l_i-1})^{-(d-1)}+\big(l_i+\log_2\zeta(\bxi,2^{l_i-1})\big)2^{-dl_i}\zeta(\bxi,2^{l_i-1})^{-1}\\
&\ll  \big(l_i+\log_2\zeta(\bxi,2^{l_i-1})\big)2^{-dl_i}\zeta(\bxi,2^{l_i-1})^{-1} \ll l_i2^{-dl_i}\zeta(\bxi,2^{l_i-1})^{-1}.
\end{aligned}}
Here we have used that $e^{-\frac{t}{d}}< 2^{-l_i} 4\|M_0\| \zeta(\bxi,2^{l_i-1})^{-1}$ and
$$\frac{t}{d}\,e^{-\frac{t}{d}}< \big(l_i+\log_2\frac{\zeta(\bxi,2^{l_i-1})}{4\|M_0\|}\big)2^{-l_i} 4\|M_0\| \zeta(\bxi,2^{l_i-1})^{-1}$$ for $t>d$,  both
of which follow from \eqref{lbound}. (Note that $x\mapsto x 2^{-x}$ is strictly decreasing for $x>\frac{1}{\log 2}$.)

Combining \eqref{Ombdd''} and \eqref{sum1'}, we have
\eqlabel{B2volest}{\operatorname{vol}_{\bR^{d-1}}\left(\set{\by\in\cK: h(\by)\in\Xi_{\underline{l}}^s, \gamma(\by)\in\Gamma_{\underline{l}}^{s,r}}\right)\ll l^s2^{-dl}\prod_{i=1}^{s}\zeta(\bxi,2^{l_i-1})^{-1}}
for any $\underline{l}=(l_1,\ldots,l_s)\in\bZ_{\ge0}^{s}$ with $l_1+\cdots+l_s=l$. 
%Since we can find $i$ such that $l_i\ge\frac{l}{s}$, $\prod_{i=1}^{s}\zeta(\bxi,2^{l_i-1})^{-1}\leq \zeta(\bxi,2^{\frac{l}{s}})^{-1}$ holds.

For any $t,R>0$ it follows from \eqref{Xidecomp'} and \eqref{B2volest} that
\eqlabel{impo1}{\begin{aligned} & \left|\int_{\by\in[-1,1]^{d-1}} F_{R,\eta,r}(\Gamma'(1,\bxi)M_0\widetilde{n}(\by)\Phi_t)d\by\right| \\
& \ll \displaystyle\sum_{s=1}^{d-1} \displaystyle\sum_{l=\lfloor\log_2 R\rfloor}^{s\lfloor\frac{t}{d\log 2}\rfloor} 
\displaystyle\sum_{\substack{\underline{l}=(l_1,\ldots,l_s)\in\bZ^s_{\ge0} \\ l_1+\cdots+l_s=l}}
l^s 2^{(\eta-d)l}\prod_{i=1}^{s}\zeta(\bxi,2^{l_i-1})^{-1} \\
& \ll \displaystyle\sum_{s=1}^{d-1}  
\displaystyle\sum_{\substack{\underline{l}=(l_1,\ldots,l_s)\in\bZ^s_{\ge0} \\  l_1+\cdots+l_s \geq \lfloor\log_2 R\rfloor }}
\prod_{i=1}^{s} (l_i+1)^s 2^{(\eta-d)l_i} \zeta(\bxi,2^{l_i-1})^{-1} .
\end{aligned}}
Since the final bound of \eqref{impo1} is independent of $t$ and
$$
\displaystyle\sum_{(l_1,\ldots,l_s)\in\bZ^s_{\ge0}}
\prod_{i=0}^{s} (l_i+1)^s 2^{(\eta-d)l_i} \zeta(\bxi,2^{l_i-1})^{-1} 
= 
\left( \sum_{l=0}^\infty (l+1)^s 2^{(\eta-d)l} \zeta(\bxi,2^{l-1})^{-1}  \right)^s <\infty 
$$
converges by assumption (B2) (note $s\leq d-1$), we can conclude that \eq{\begin{aligned}
\lim_{R\to\infty}\limsup_{t\to \infty}&\left|\int_{\by\in[-1,1]^{d-1}} F_{R,\eta,r}\left(\Gamma'(1,\bxi)M_0\widetilde{n}(\by)\Phi_t\right) d\by\right| =0
\end{aligned}}
as required.

We now discuss the case that $d=2$ and $\bxi$ is $(0,\eta-2,2)$-vaguely Diophantine. If $d=2$, then $s=1$, and in view of the definition \eqref{perpsize} the set $\Lambda_{k}(p)$ is the empty set unless $p=0$. Hence, the double sum in the last line of \eqref{Ombdd''} is written
$$\displaystyle\sum_{k=\lceil \log_2\frac{\zeta(\bxi,2^{l})}{4\|M_0\|}\rceil}^{\lfloor\frac{t}{2\log 2}-l\rfloor}2^{\om_{s,\bxi}(k,0,l)}2^{-l-k}=\displaystyle\sum_{k=\lceil \log_2\frac{\zeta(\bxi,2^{l})}{4\|M_0\|}\rceil}^{\lfloor\frac{t}{2\log 2}-l\rfloor}2^{k-l}\zeta(\bxi,2^{l-1})^{-2}$$
and bounded above by $\ll e^{\frac{t}{2}}2^{-2l}\zeta(\bxi,2^{l-1})^{-2}$. Plugging this in \eqref{Ombdd''}, we get
$$\operatorname{vol}_{\bR}\left(\set{\by\in\cK: h(\by)\in\Xi_{l}^1, \gamma(\by)\in\Gamma_{l}^{1,r}}\right)\ll 2^{-2l}\zeta(\bxi,2^{l-1})^{-2}.$$
Note that here we gained additional decay of $\zeta(\bxi,2^{l-1})^{-1}$ in comparison to \eqref{B2volest}. It follows that
$$\left|\int_{\by\in[-1,1]} F_{R,\eta,r}(\Gamma'(1,\bxi)M_0\widetilde{n}(\by)\Phi_t)d\by\right|\ll \displaystyle\sum_{l=\lfloor\log_2 R\rfloor}^{\lfloor\frac{t}{2\log 2}\rfloor} 2^{(\eta-2)l}\zeta(\bxi,2^{l-1})^{-2}<\infty.$$
\end{proof}

\begin{lem}\label{LogLipschitz}
    For any compact set $\cC\subset G$, there exists $C=C(\cC)>1$ such that $F_{R,\eta,r}\big(g(h,0)\big)\leq C^{(d-1)\eta}  F_{C^{-(d-1)}R,\eta,Cr}(g)$ for any $h\in\cC$ and $g\in G'$.
\end{lem}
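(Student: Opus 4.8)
The plan is to reduce to fundamental-domain representatives and track how the Iwasawa data changes under the right translation $g\mapsto g'=g(h,0)$. If $\iota(\Gamma'g)=(1,\bb(g))(M(g),0)$ with $M(g)\in\cF$, then $g'$ is represented by $(1,\bb(g))(M(g)h,0)$, and reducing $M(g)h$ back into $\cF$, say $M(g)h=\gamma M(g')$ with $\gamma\in\Gamma$ and $M(g')\in\cF$, shows $M(g')=\gamma^{-1}M(g)h$ and $\bb(g')\equiv\bb(g)\gamma\pmod{\bZ^d}$; equivalently $\bZ^dg'=(\bZ^dg)h$ as affine lattices. The three quantities entering $F_{R,\eta,r}$ — the heights $v_i$, the truncation index $s_r$, and the shift conditions $\chi_{[-c_dr,c_dr]}(v_ib_i)$ — will be handled in turn.

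First I would compare flag heights. In the Iwasawa decomposition $M(g)=n(\bu)a(\bv)k$ the matrix $n(\bu)^{-1}$ is unipotent and $k\in\SO(d)$, so one has $\prod_{i=1}^{j}v_i(g)^{-1}=\bignorm{\bbeta_1(g)\wedge\cdots\wedge\bbeta_j(g)}$ with $\bbeta_i(g)=\be_i\transp{M(g)^{-1}}$, and $\bbeta_1(g),\dots,\bbeta_d(g)$ is a Siegel-reduced basis of $L_g:=\bZ^d\transp{M(g)^{-1}}$; by reduction theory the left-hand side is comparable, up to a constant depending only on $d$, to the minimal covolume $\alpha_j(L_g)$ of a rank-$j$ sublattice. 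Since $L_{g'}=L_g\transp{h^{-1}}$ and right multiplication by the fixed element $\transp{h^{-1}}$ distorts the covolume of every sublattice by a factor bounded above and below in terms of $\cC$, I get $\prod_{i\le j}v_i(g')\asymp_{\cC,d}\prod_{i\le j}v_i(g)$ for every $j$, hence $v_i(g')\asymp_{\cC,d}v_i(g)$ for every $i$. For $C=C(\cC,d)$ large this gives $v_i(g')\le C\,v_i(g)$ on at most $d-1$ indices, relates $s_r(g')$ to $s_{Cr}(g)$, and — via $\prod_{i\le s_r(g')}v_i(g')\ge R$ whenever $F_{R,\eta,r}(g')\ne0$ — produces the prefactor $C^{(d-1)\eta}$ and the lowered threshold $C^{-(d-1)}R$.

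Second I would transfer the shift conditions, which is cleanest at the level of the affine lattice rather than through $\gamma$. If $F_{R,\eta,r}(g')\ne0$ then $\chi_{[-c_dr,c_dr]}(v_i(g')b_i(g'))=1$ for all $i\le s_r(g')$; using the Siegel bounds $v_{i+1}\le\tfrac2{\sqrt3}v_i$ to control the unipotent shear, one sees that the affine lattice $\bZ^dg'$ contains a point of Euclidean norm $\le c_1(d)\,r$, so $\bZ^dg$ contains a point of norm $\le c_1(d)\|h^{-1}\|\,r\le c_2(\cC,d)\,r$. Conversely, if $\bZ^dg$ has a point $\by_0=(\bm+\bb(g))M(g)$ of norm $\le c_2r$, the Siegel inequalities force, by a short downward induction, $m_i=0$ at every index with $v_i(g)$ large, and hence $|v_i(g)b_i(g)|\le c_3(\cC,d)\,r\le c_dCr$ there; at the remaining indices $i\le s_{Cr}(g)$ the bound $|v_i(g)b_i(g)|\le c_dCr$ is automatic since $v_i(g)\le2c_dCr$. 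Thus $\chi_{[-c_dCr,c_dCr]}(v_i(g)b_i(g))=1$ for all $i\le s_{Cr}(g)$.

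Combining the two steps: if $F_{R,\eta,r}(g')=0$ the inequality is trivial, and otherwise $F_{R,\eta,r}(g')=\prod_{i\le s_r(g')}v_i(g')^\eta\le C^{(d-1)\eta}\prod_{i\le s_{Cr}(g)}v_i(g)^\eta=C^{(d-1)\eta}F_{C^{-(d-1)}R,\eta,Cr}(g)$, the first step supplying the height and threshold bounds and the second the surviving $\chi$-factors. The main obstacle is making the index comparison and the product-of-heights bound fit together with exactly this choice of constants: one must check that the only heights $v_i(g')$ which can enter $\prod_{i\le s_r(g')}v_i(g')$ but are missing on the $g$-side are of medium size, comparable to $r$, and are precisely what the enlarged parameter $Cr$ together with the factor $C^{(d-1)\eta}$ are designed to absorb — so the delicate point is to rule out that a genuinely new large height appears after the perturbation and to balance the constants accordingly.
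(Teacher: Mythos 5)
Your proposal takes a genuinely different and, in the supporting steps, more robust route than the paper. The paper's proof is very terse: it asserts $v_i(g)=\|\be_i M\|$ and $v_i\big(g(h,0)\big)=\|\be_iMh\|$ and that $b_i(g)=b_i\big(g(h,0)\big)$ ``since $b_i$ is invariant under the $\SL(d,\R)$-action.'' Neither assertion is literally correct. The identity $v_i(g)=\|\be_iM\|$ is only an equality up to a $d$-dependent constant on the Siegel set (the unipotent part of the Iwasawa decomposition contributes), and in any case $Mh$ is generally not in $\cF$, so $\|\be_iMh\|$ is not a priori comparable to the reduced height. More seriously, $b_i$ is read off from the reduced representative $\iota(\Gamma'g)\in\cF'$; after right-translation by $h$ one must re-reduce by some $\gamma\in\Gamma$, which turns $\bb$ into $\bb\gamma\bmod\bZ^d$. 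You avoid both problems: characterizing $\prod_{i\le j}v_i$ via sublattice covolumes, which are stable under right translation by a compact set, is the standard way to make the height comparison honest; and transferring the $\chi$-conditions through ``the affine lattice $\bZ^dg'=(\bZ^dg)h$ contains a short vector'' (your second step, with the downward induction to recover $m_i=0$ at the large indices) is exactly what replaces the unjustified $b_i(g)=b_i(g')$.

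There is, however, a real gap in your concluding step, and you sense it yourself. You write $\prod_{i\le s_r(g')}v_i(g')^\eta\le C^{(d-1)\eta}\prod_{i\le s_{Cr}(g)}v_i(g)^\eta$, which requires $s_r(g')\le s_{Cr}(g)$; but the index comparison one actually gets from $v_i(g)\ge C^{-1}v_i(g')$ is the \emph{opposite}: $s_{Cr}(g)\le s_r(g')$. Enlarging $r$ to $Cr$ on the right \emph{shrinks} the truncation index, it does not absorb new terms. The surviving medium indices $s_{Cr}(g)<i\le s_r(g')$ have $2c_d r<v_i(g')\le 2c_dC^2 r$, so each contributes a factor of size up to $(2c_dC^2 r)^\eta$ on the left, and the threshold $\chi_{[R,\infty)}(\prod v_i(g'))$ is simultaneously deflated by $\prod_{s_{Cr}(g)<i\le s_r(g')}v_i(g')^{-1}$; neither is absorbed by $C^{(d-1)\eta}$ if $C=C(\cC)$ is independent of $r$. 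This is not a defect peculiar to your write-up: the paper's own proof slides over the identical point, its middle line using the cutoff $s_r(g)$ while the final line identifies the result with $F_{C^{-(d-1)}R,\eta,Cr}(g)$, whose cutoff is $s_{Cr}(g)$. Everything is correct and usable if one allows $C=C(\cC,r)$, which is harmless for the applications (Proposition~\ref{mainprop'} and Section~\ref{mainlemma}, where $r$ is fixed); but either you must concede that extra dependence or carry out a finer accounting of the medium indices than either you or the paper provides.
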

\begin{proof}
Let $g=(1,\bb)(M,0)$ with $M\in G$ and $\bb\in \bR^d$. For each $1\leq i\leq d$, we have $v_i(g)=\|\be_i\bv(M)\|=\|\be_iM\|$ and $v_i\big(g(h,0)\big)=\|\be_i\bv(Mh)\|=\|\be_iMh\|$, hence there exists $C=C(\cC)>1$ such that $C^{-1}v_i\big(g(h,0)\big)\leq v_i(g)\leq Cv_i\big(g(h,0)\big)$ for any $h\in \cC$. We also have $b_i(g)=b_i\big(g(h,0)\big)$ for all $i$ since $b_i$ is invariant under $\operatorname{SL}(d,\bR)$-action. Let $g'=g(h,0)$. It follows that
\eq{\begin{aligned}
    &F_{R,\eta,r}(g')=\chi_{[R,\infty)}\bigg(\prod_{i=1}^{s_r(g')}v_i(g')\bigg)\; \prod_{i=1}^{s_r(g')} v_i(g')^\eta \chi_{[-c_dr,c_dr]}\big(v_i(g') b_i(g')\big)\\
    &\leq C^{(d-1)\eta}\chi_{[C^{-(d-1)}R,\infty)}\bigg(\prod_{i=1}^{s_r(g)}v_i(g)\bigg)\; \prod_{i=1}^{s_r(g)} v_i(g)^\eta \chi_{[-c_dCr,c_dCr]}\big(v_i(g) b_i(g)\big)\\
    &=C^{(d-1)\eta}F_{C^{-(d-1)}R,\eta,Cr}(g).
\end{aligned}}
\end{proof}

Let us denote by $\bS^{d-1}_+$ and $\bS^{d-1}_-$ the upper hemisphere and the lower hemisphere, respectively, i.e. $$\bS^{d-1}_+=\set{(\bupsilon_1,\ldots,\bupsilon_d)\in \bS^{d-1}: \bupsilon_1\ge 0},$$
$$\bS^{d-1}_-=\set{(\bupsilon_1,\ldots,\bupsilon_d)\in \bS^{d-1}: \bupsilon_1\leq 0}.$$

By the construction of the map in \eqref{kupdef+}, $k$ is smooth and its differential is non-singular and bounded on $\bS^{d-1}_+$. For $\bupsilon\in\bS^{d-1}_+$ we may write
\eqlabel{k+decomposition}{k(\bupsilon)=\widetilde{n}(\by(\bupsilon))\left(\begin{matrix} c(\bupsilon) & 0 \\ \transp{\bw(\bupsilon)} & A(\bupsilon)\end{matrix}\right)}
for $c(\bupsilon)>0$, $\bw(\bupsilon)\in\bR^{d-1}$, and $A(\bupsilon)\in \operatorname{Mat}_{d-1,d-1}(\bR)$. Note that $\by,c,\bw$, and $A$ are smooth and bounded on $\bS^{d-1}_+$.

\begin{prop}\label{mainprop'}Let $\lambda$ be a Borel probability measure on $\bS^{d-1}$ with continuous density, $\bxi\in\bR^d$ and $\eta>0$ so that (B1) or (B2) holds. Then
\eqlabel{nonescape1+}{\lim_{R\to\infty}\limsup_{t\to \infty}\left|\int_{\bupsilon\in\bS^{d-1}_+}F_{R,\eta,r}\left(\Gamma'(1,\bxi)M_0k(\bupsilon)\Phi_t\right)d\lambda(\bupsilon)\right|=0.}

\end{prop}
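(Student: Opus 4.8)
The plan is to reduce Proposition~\ref{mainprop'} to Proposition~\ref{mainprop} by transporting the average over $\bS^{d-1}_+$ to a horospherical average over the $\widetilde{n}(\by)$-horosphere, using the factorisation \eqref{k+decomposition}. Write
\[
B(\bupsilon):=\smallmat{c(\bupsilon) & 0 \\ \transp{\bw(\bupsilon)} & A(\bupsilon)},
\]
so that $k(\bupsilon)=\widetilde{n}(\by(\bupsilon))B(\bupsilon)$ for $\bupsilon\in\bS^{d-1}_+$, and hence
\[
M_0 k(\bupsilon)\Phi_t=\big(M_0\widetilde{n}(\by(\bupsilon))\Phi_t\big)\,\big(\Phi_t^{-1}B(\bupsilon)\Phi_t\big).
\]
The crux is that, although $B(\bupsilon)$ is not close to the identity, its conjugate $\Phi_t^{-1}B(\bupsilon)\Phi_t$ stays in a fixed compact subset of $G$ as $t\to\infty$. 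Indeed, since $\Phi_t=\diag{e^{-\frac{d-1}{d}t},e^{t/d},\ldots,e^{t/d}}$, a direct block computation gives
\[
\Phi_t^{-1}B(\bupsilon)\Phi_t=\smallmat{c(\bupsilon) & 0 \\ e^{-t}\,\transp{\bw(\bupsilon)} & A(\bupsilon)},
\]
whose determinant equals $\det B(\bupsilon)=1$; as $c,\bw,A$ are bounded on the compact set $\bS^{d-1}_+$ and $e^{-t}\le1$ for $t\ge0$, this family is a bounded subset of $\SL(d,\bR)$, hence lies in some compact $\cC\subset G$.

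Granting this, I would apply Lemma~\ref{LogLipschitz} with $\cC$ and the constant $C=C(\cC)$ it produces, together with the identity $\Gamma'(1,\bxi)M_0 k(\bupsilon)\Phi_t=\big(\Gamma'(1,\bxi)M_0\widetilde{n}(\by(\bupsilon))\Phi_t\big)\big(\Phi_t^{-1}B(\bupsilon)\Phi_t,0\big)$, to get, for all $\bupsilon\in\bS^{d-1}_+$ and $t\ge0$,
\[
F_{R,\eta,r}\big(\Gamma'(1,\bxi)M_0 k(\bupsilon)\Phi_t\big)\le C^{(d-1)\eta}\,F_{C^{-(d-1)}R,\,\eta,\,Cr}\big(\Gamma'(1,\bxi)M_0\widetilde{n}(\by(\bupsilon))\Phi_t\big).
\]
Next I would change variables $\by=\by(\bupsilon)$. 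The map $\bupsilon\mapsto\by(\bupsilon)$ is a homeomorphism of $\bS^{d-1}_+$ onto the closed ball $\{\by\in\bR^{d-1}:\|\by\|\le\pi/2\}$, smooth with non-singular differential on the interior, so its inverse is Lipschitz; hence the push-forward of $\lambda$ under it has a bounded, compactly supported density $\psi\ge0$ with respect to Lebesgue measure on $\bR^{d-1}$. Choosing $\widetilde\psi\in\operatorname{C}_0(\bR^{d-1})$ with $\widetilde\psi\ge\psi$ and using $F_{R,\eta,r}\ge0$, the pointwise bound integrates (after the substitution) to
\[
\bigg|\int_{\bS^{d-1}_+}F_{R,\eta,r}\big(\Gamma'(1,\bxi)M_0 k(\bupsilon)\Phi_t\big)\,d\lambda(\bupsilon)\bigg|\le C^{(d-1)\eta}\int_{\bR^{d-1}}F_{C^{-(d-1)}R,\,\eta,\,Cr}\big(\Gamma'(1,\bxi)M_0\widetilde{n}(\by)\Phi_t\big)\,\widetilde\psi(\by)\,d\by.
\]

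Finally I would conclude with Proposition~\ref{mainprop}: the hypotheses (B1) and (B2) constrain only $\eta$, $\bxi$ and $d$, so they are unaffected by replacing $r$ with $Cr$ and $R$ with $C^{-(d-1)}R$. Taking $\limsup_{t\to\infty}$ and then $\lim_{R\to\infty}$ (note $C^{-(d-1)}R\to\infty$) in the last display and invoking Proposition~\ref{mainprop} with test function $\widetilde\psi$ yields \eqref{nonescape1+}. I do not expect a genuine obstacle here: the single point that must be handled with care is the uniform-in-$t$ compactness of $\{\Phi_t^{-1}B(\bupsilon)\Phi_t:\bupsilon\in\bS^{d-1}_+,\,t\ge0\}$ — which is precisely the reason the ``lower-triangular'' factor of $k(\bupsilon)$ causes no extra escape of mass — together with the elementary bi-Lipschitz property of $\bupsilon\mapsto\by(\bupsilon)$; all of the substantive analysis has already been carried out in Proposition~\ref{mainprop}.
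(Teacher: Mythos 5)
Your proof is correct and follows essentially the same route as the paper's: factor $k(\bupsilon)=\widetilde{n}(\by(\bupsilon))B(\bupsilon)$, commute $B(\bupsilon)$ past $\Phi_t$ via conjugation to get $\smallmat{c(\bupsilon) & 0 \\ e^{-t}\transp{\bw(\bupsilon)} & A(\bupsilon)}$, observe this family is contained in a fixed compact $\cC\subset G$ uniformly in $t\geq 0$ and $\bupsilon\in\bS^{d-1}_+$, invoke Lemma~\ref{LogLipschitz}, change variables $\bupsilon\mapsto\by(\bupsilon)$, and apply Proposition~\ref{mainprop}. The only difference is that you spell out the push-forward of $\lambda$ and the majorisation by $\widetilde\psi\in\operatorname{C}_0$ more explicitly, where the paper simply says ``as $\by$ is smooth''; this extra care is harmless and correct.
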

\begin{proof}
This follows from Proposition \ref{mainprop} by the same argument as in the proof of \cite[Corollary 5.4]{MS10}. We first observe that
\eq{\begin{aligned}
    \Gamma'(1,\bxi)M_0k(\bupsilon)\Phi_t&=\Gamma'(1,\bxi)M_0\widetilde{n}(\by(\bupsilon))\left(\begin{matrix} c(\bupsilon) & 0 \\ \transp{\bw(\bupsilon)} & A(\bupsilon)\end{matrix}\right)\Phi_t\\
    &=\Gamma'(1,\bxi)M_0\widetilde{n}(\by(\bupsilon))\Phi_t\left(\begin{matrix} c(\bupsilon) & 0 \\ e^{-t}\transp{\bw(\bupsilon)} & A(\bupsilon)\end{matrix}\right).
\end{aligned}}
Since $c,\bw$, and $A$ are bounded on $\bS^{d-1}_+$, we may choose a compact set $\cC\subset G$ so that $\left(\begin{matrix} c(\bupsilon) & 0 \\ e^{-t}\transp{\bw(\bupsilon)} & A(\bupsilon)\end{matrix}\right)\in \cC$ for any $\bupsilon\in\bS^{d-1}_+$. It follows from Lemma \ref{LogLipschitz} that there exists $C>1$ such that
$$F_{R,\eta,r}\big(\Gamma'(1,\bxi)M_0k(\bupsilon)\Phi_t\big)\leq C^{(d-1)\eta}  F_{C^{-(d-1)}R,\eta,Cr}(\Gamma'(1,\bxi)M_0\widetilde{n}(\by(\bupsilon))\Phi_t)$$
for any $\bupsilon\in\bS^{d-1}_+$. As $\by$ is smooth, \eqref{nonescape1+} follows from Proposition \ref{mainprop}.
\end{proof}

\section{The main lemma}\label{mainlemma}
\begin{lem}
Under the assumptions of Theorem \ref{mainthm},
\eqlabel{mainlem}{\lim_{K\to\infty}\limsup_{T\to\infty}\left|\bM_\lambda(T,\underline{z})-\bM_\lambda^{(K)}(T,\underline{z})\right|=0.}
\end{lem}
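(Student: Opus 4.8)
The idea is to recognise that $\bM_\lambda(T,\underline z)-\bM_\lambda^{(K)}(T,\underline z)$ is an integral over the set of $\bupsilon$ for which some $\cN_{c,T}(\sigma_j,\bupsilon)$ exceeds $K$, to dominate the integrand there pointwise in $\bupsilon$ by the function $F_{R,\eta,r}$ of \eqref{Fetadef} with a threshold $R=R(K)$ that diverges as $K\to\infty$, and then to quote Proposition \ref{mainprop'}. Concretely, since $\cN_{c,T}(\sigma_j,\bupsilon)+1\ge1$ one has $|(\cN_{c,T}(\sigma_j,\bupsilon)+1)^{z_j}|\le(\cN_{c,T}(\sigma_j,\bupsilon)+1)^{\operatorname{Re}_+(z_j)}$, so I would first replace each $z_j$ by $\operatorname{Re}_+(z_j)$, after which $\eta:=z_1+\cdots+z_m=\operatorname{Re}_+(z_1)+\cdots+\operatorname{Re}_+(z_m)$ and hypotheses (A1), (A2) of Theorem \ref{mainthm} turn into (B1), (B2) of Proposition \ref{mainprop}; from \eqref{mixdef} and \eqref{resmix} one gets
\[
\big|\bM_\lambda(T,\underline z)-\bM_\lambda^{(K)}(T,\underline z)\big|\le\int_{\{\bupsilon\,:\,\max_j\cN_{c,T}(\sigma_j,\bupsilon)>K\}}\prod_{j=1}^m\big(\cN_{c,T}(\sigma_j,\bupsilon)+1\big)^{z_j}\,\lambda(d\bupsilon).
\]

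Next I would pass from the sphere to the space of affine lattices on the upper hemisphere. Fix $\epsilon=1$ and set $\mathfrak{C}_j:=\mathfrak{C}_0(\sigma_j+1)$, $\mathfrak{C}:=\mathfrak{C}_1\cup\cdots\cup\mathfrak{C}_m$, $r:=r(\mathfrak{C})$ (a fixed constant), and $g_t(\bupsilon):=(1,\bxi)M_0 k(\bupsilon)\Phi_t$. By \eqref{geoest+}, for $T=e^{t/d}$ large and $\bupsilon\in\bS^{d-1}_+$ one has $\cN_{c,T}(\sigma_j,\bupsilon)\le\cN(g_t(\bupsilon),\mathfrak{C}_j)$ for all $j$, so on the set in question $\cN(g_t(\bupsilon),\mathfrak{C})\ge\max_j\cN(g_t(\bupsilon),\mathfrak{C}_j)>K$; by the bound $\cN(g,\mathfrak{C})\le C_dr^d\prod_{i=1}^{s_r(g)}v_i(g)$ established inside the proof of Lemma \ref{three-one} (via \eqref{zero-one}), this forces $\prod_{i=1}^{s_r(g_t(\bupsilon))}v_i(g_t(\bupsilon))>K/(C_dr^d)=:R(K)$. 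Hence, once $K$ is large enough that $R(K)$ passes the threshold implicit in \eqref{NFrel}, the elementary inequality $(a+1)^b\le2^b(a^b+1)$ together with the monotonicity \eqref{union} (using $\cN(g_t(\bupsilon),\mathfrak{C})\ge1$) and \eqref{NFrel} gives, for such $\bupsilon$,
\[
\prod_{j=1}^m\big(\cN_{c,T}(\sigma_j,\bupsilon)+1\big)^{z_j}\ \le\ \prod_{j=1}^m\big(\cN(g_t(\bupsilon),\mathfrak{C}_j)+1\big)^{z_j}\ \ll\ \cN\big(g_t(\bupsilon),\mathfrak{C}\big)^{\eta}\ \ll\ F_{R(K),\eta,r}\big(g_t(\bupsilon)\big);
\]
since $F_{R(K),\eta,r}\ge0$, the indicator of the bad set may now be discarded, so the $\bS^{d-1}_+$-part of the integral above is $\ll\int_{\bS^{d-1}_+}F_{R(K),\eta,r}(g_t(\bupsilon))\,\lambda(d\bupsilon)$.

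For the lower hemisphere I would use a fixed rotation $\rho\in\SO(d)$ with $\be_1\rho=-\be_1$ (say $\rho=\operatorname{diag}(-1,-1,1,\ldots,1)$): then $\bupsilon\mapsto\bupsilon\rho$ maps $\bS^{d-1}_-$ onto $\bS^{d-1}_+$ and carries the affine lattice $(\bZ^d+\bxi)M_0$ isometrically to $(\bZ^d+\bxi)(M_0\rho)$, so $\cN_{c,T}(\sigma_j,\bupsilon)$ for the original lattice equals the count for $(\bZ^d+\bxi)(M_0\rho)$ evaluated at $\bupsilon\rho$, and the previous paragraph applies verbatim with $M_0\rho$ in place of $M_0$ and with the pushforward of $\lambda|_{\bS^{d-1}_-}$ (a finite measure with bounded density supported on $\bS^{d-1}_+$, which is all the proof of Proposition \ref{mainprop'} uses) in place of $\lambda$. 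Adding the two contributions, $\limsup_{T\to\infty}\big|\bM_\lambda(T,\underline z)-\bM_\lambda^{(K)}(T,\underline z)\big|$ is bounded by a constant times a sum of two terms of the shape $\limsup_{t\to\infty}\big|\int_{\bS^{d-1}_+}F_{R(K),\eta,r}((1,\bxi)M_0'k(\bupsilon)\Phi_t)\,\lambda'(d\bupsilon)\big|$; since (B1) or (B2) holds and $R(K)\to\infty$ with $K$, Proposition \ref{mainprop'} forces each of these to $0$, which is \eqref{mainlem}.

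The point to stress is that almost everything has already been done: the escape-of-mass mechanism for embedded horospheres and the lattice-point counts of Lemmas \ref{pcount}--\ref{pcount'}, which is where the vague Diophantine conditions enter, are all encapsulated in Proposition \ref{mainprop'}. The only genuinely delicate step in the present lemma is the domination argument above — turning the event ``at least $K$ affine lattice points in a disc of volume $\asymp T^{-d}$'' into the quantitative divergence $\prod_{i=1}^{s_r}v_i(g_t(\bupsilon))>R(K)\to\infty$ that is needed to bring \eqref{NFrel} into play — together with the more cosmetic nuisance that the chart $k$ degenerates at $-\be_1$, which is why one splits $\bS^{d-1}$ into two hemispheres and twists the second one by $\rho$.
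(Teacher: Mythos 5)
Your proof is correct and follows essentially the same approach as the paper: bound the tail $\bM_\lambda(T,\underline z)-\bM_\lambda^{(K)}(T,\underline z)$ by an integral over the exceedance set with $z_j$ replaced by $\operatorname{Re}_+(z_j)$, dominate the integrand pointwise by $F_{R(K),\eta,r}$ with $R(K)\to\infty$ via \eqref{geoest+}, Lemma~\ref{three-one} and \eqref{NFrel}, and then invoke Proposition~\ref{mainprop'} separately on each hemisphere. The only (cosmetic) difference is the lower-hemisphere symmetry: you twist $M_0$ by $\rho=\operatorname{diag}(-1,-1,1,\ldots,1)\in\SO(d)$, whereas the paper uses the central symmetry $(\bupsilon,\bxi)\mapsto(-\bupsilon,-\bxi)$ with $M_0$ fixed; both work because $\zeta(-\bxi,T)=\zeta(\bxi,T)$ preserves the Diophantine hypotheses and Proposition~\ref{mainprop'} holds for every $M_0$.
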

\begin{proof}
We have
$$\left|\bM_\lambda(T,\underline{z})-\bM_\lambda^{(K)}(T,\underline{z})\right|\leq\int_{\cN_{c,T}(\sig^*,\bupsilon)\ge K}(\cN_{c,T}(\sig^*,\bupsilon)+1)^{\operatorname{Re}(z_1)+\cdots+\operatorname{Re}(z_m)}\lambda(d\bupsilon)$$
where $\sig^*=\displaystyle\max_{1\leq j\leq m}\sig_j$.   We now split the integral over the upper and lower hemispheres. The integral over the upper hemisphere $\bS^{d-1}_+$ vanishes in the limit in view of Proposition \ref{mainprop'} and the upper bounds \eqref{geoest+} and \eqref{NFrel}. The analogous statement for $\bS^{d-1}_-$ follows by symmetry, since the quantity $\cN_{c,T}(\sig^*,\bupsilon)$ for a given $\bxi$ has the same value as $\cN_{c,T}(\sig^*,-\bupsilon)$ for $-\bxi$, with everything else (including $M_0$) being fixed.
\end{proof}

This completes the proof of Theorem \ref{mainthm}.

\section{Proof of Corollary \ref{cor1} and Corollary \ref{cor2}}\label{secProof}
\subsection{Proof of Corollary \ref{cor1}}

%Following \cite{EMV15}, we give key properties of the limiting distribution $E_{c,\bxi}(\underline{r},\underline{\sig})$:
%
%
%(a) $E_{c,\bxi}(\underline{r},\underline{\sig})$ is independent of $\lambda$ and $\cL$.
%
%
%(b) $\displaystyle\sum_{\underline{r}\in\bZ_{\ge0}^m}r_jE_{c,\bxi}(\underline{r},\underline{\sig})=\displaystyle\sum_{r=0}^{\infty}rE_{c,\bxi}(r,\sig_j)=\sig_j$ for any $j\leq m$.
%
%
%(c) For $\bxi\in\bQ^d$, $E_{c,\bxi}(\underline{r},\underline{\sig})=:E_{c}(\underline{r},\underline{\sig})$ is independent of $\bxi$.

We will need the following variant of Siegel's formula.

\begin{prop}\label{Sievar}\cite[Proposition 14]{EMV15}
If $F\in L^1(\bR^d\times\bR^d)$, then
\eq{\int_{\Gamma'\backslash G'}\sum_{\bm_1\neq\bm_2\in\bZ^d}F(\bm_1 g,\bm_2 g)\, dm_{\Gamma'\backslash G'}(g)=\int_{\bR^d\times\bR^d}F(\bx,\by)\,d\bx\,d\by.}
\end{prop}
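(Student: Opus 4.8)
The plan is to peel off the torus directions in $X'=\Gamma'\backslash G'$ and reduce to the classical Siegel mean value theorem on $X_1=\Gamma\backslash G$. Write $g\in G'$ as $g=(1,\bc)(M,0)$ with $M\in G$ and $\bc\in\bR^d/\bZ^d$. Since $G$ is unimodular and acts on $\bR^d$ with unit Jacobian, $G'$ is unimodular, and by uniqueness of Haar measure the probability measure $m_{X'}$ disintegrates over $m_{X_1}$ as $dm_{X'}=dm_{X_1}(M)\,d\bc$, with $d\bc$ Lebesgue measure on the unit-covolume torus (the normalization being forced because both sides have total mass $1$). For $\bm\in\bZ^d$ one has the convenient identity $\bm\,g=(\bm+\bc)M$. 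Substituting this into the left-hand side and using Tonelli — first for $F\ge0$, then for a general $F\in L^1$ by splitting $F=F^{+}-F^{-}$ — turns it into
\[
\int_{X_1}\int_{\bR^d/\bZ^d}\ \sum_{\bm_1\ne\bm_2\in\bZ^d}F\big((\bm_1+\bc)M,(\bm_2+\bc)M\big)\,d\bc\,dm_{X_1}(M).
\]

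Next I would evaluate the inner torus integral for fixed $M$. Writing $\bn=\bm_1-\bm_2\in\bZ^d\setminus\{0\}$ and $\bm=\bm_2$, and using that $(\bm,\bc)\mapsto\bm+\bc$ is a measure-preserving bijection $\bZ^d\times[-\tfrac12,\tfrac12)^d\to\bR^d$, the sum over $\bm$ together with the integral over $\bc$ collapses to a single integral over $\bR^d$; after the substitution $\bw=\bu M$ (Jacobian $1$) this gives
\[
\int_{\bR^d/\bZ^d}\ \sum_{\bm_1\ne\bm_2}F\big((\bm_1+\bc)M,(\bm_2+\bc)M\big)\,d\bc=\sum_{\bn\in\bZ^d\setminus\{0\}}g_F(\bn M),\qquad g_F(\bq):=\int_{\bR^d}F(\bw+\bq,\bw)\,d\bw.
\]
Since $\{\bn M:\bn\in\bZ^d\setminus\{0\}\}=\bZ^dM\setminus\{0\}$, the right-hand side is exactly the Siegel transform of $g_F$ at the unimodular lattice $\bZ^dM$; and by Fubini $g_F\in L^1(\bR^d)$ with $\|g_F\|_1\le\|F\|_1$, so Siegel's mean value theorem on $X_1$ applies (in its $L^1$ form, again obtained from the nonnegative case via $F^{\pm}$) and yields $\int_{X_1}\sum_{\bn\ne0}g_F(\bn M)\,dm_{X_1}(M)=\int_{\bR^d}g_F(\bq)\,d\bq$. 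Finally the change of variables $\bx=\bw+\bq$, $\by=\bw$ (Jacobian $1$) identifies $\int_{\bR^d}g_F(\bq)\,d\bq$ with $\int_{\bR^d\times\bR^d}F(\bx,\by)\,d\bx\,d\by$, which is the claim.

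The conceptual content lies entirely in this reduction and the invocation of Siegel's formula; the remaining points require only care, not ideas. The two that I would watch are: fixing the normalization of the fiber measure so that no spurious constant survives — safely checked by the total-mass computation above, or by testing the identity on one nonnegative $F$ — and justifying each interchange of summation and integration for an arbitrary $F\in L^1$, which I would handle uniformly by first establishing everything for $F\ge0$ by Tonelli and then writing $F=F^{+}-F^{-}$.
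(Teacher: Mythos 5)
The paper does not prove this proposition; it is cited verbatim from \cite[Proposition 14]{EMV15}. Your argument is nevertheless the standard (and, as far as I can tell, the same as in the cited source): disintegrate $m_{X'}$ over $m_{X_1}$ with torus fiber $\bZ^d\backslash\bR^d$, use $\bm g=(\bm+\bc)M$, reparameterize the off-diagonal sum by $\bn=\bm_1-\bm_2\in\bZ^d\setminus\{0\}$, collapse the $\bm$-sum and the $\bc$-integral into a single $\bR^d$-integral, and invoke Siegel's mean value theorem on $\Gamma\backslash G$. Indeed, the present paper implicitly performs the same fiber disintegration when it evaluates the diagonal contribution in Section~\ref{secProof}, so your reduction is consistent with how the result is used. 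The few technical points you flag — the normalization of the fiber measure, the periodicity of the inner $\bm$-sum in $\bc$ so that it descends to the torus, and handling Tonelli/Fubini via $F^\pm$ — are all routine and you handle them correctly. The proof is sound.
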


Here (and below) we view $g=\iota(\Gamma' g)\in G'$ as the representative of the coset $\Gamma' g$ in the fundamental domain $\cF'$.

For the proof of Corollary \ref{cor1}, we first prove
\eqlabel{mixexp}{\displaystyle\sum_{\underline{r}=(r_1,r_2)\in\bZ^2_{\ge0}}r_1r_2E_{c}(\underline{r},\underline{\sig})=\sig_1\sig_2+\min\set{\sig_1,\sig_2},}
 for $\underline{\sig}=(\sig_1,\sig_2)$. To deduce \eqref{mixexp}, note that
$$\displaystyle\sum_{(r_1,r_2)\in\bZ^2_{\ge0}}r_1r_2E_{c}(\underline{r},\underline{\sig})=\int_{\Gamma'\backslash G'}\sum_{\bm_1,\bm_2\in\bZ^d}\chi_{\mathfrak{C}_c(\sig_1)}(\bm_1 g)\chi_{\mathfrak{C}_c(\sig_2)}(\bm_2 g)\, dm_{\Gamma'\backslash G'}(g).$$
Recall that for $\sig>0$, the area of $\mathfrak{C}_c(\sig)$ is precisely $\sig$. Applying Proposition \ref{Sievar}, the off-diagonal part of the right-hand side is equal to
$$\int_{\bR^d\times\bR^d}\chi_{\mathfrak{C}_c(\sig_1)}(\bx)\chi_{\mathfrak{C}_c(\sig_2)}(\by)\,d\bx\,d\by=\sig_1\sig_2.$$
For the diagonal part, let $\mathfrak{C}=\mathfrak{C}_c(\sig_1)\cap\mathfrak{C}_c(\sig_2)$ and note that $\mathfrak{C}=\mathfrak{C}_c(\min\set{\sig_1,\sig_2})$. Then the diagonal part is evaluated as follows:
\eq{\begin{aligned}
& \int_{\Gamma'\backslash G'}\sum_{\bm\in\bZ^d} \chi_\mathfrak{C}(\bm g)\, dm_{\Gamma'\backslash G'}(g) \\
&=\int_{\Gamma\backslash G}\sum_{\bm\in\bZ^d}\int_{\bZ^d\backslash\bR^d}\chi_\mathfrak{C}((\bm+\bxi)M)\,d\bxi\, dm_{\Gamma\backslash G}(M)\\
&=\int_{\Gamma\backslash G}\int_{\bR^d}\chi_\mathfrak{C}(\bx M)\,d\bx\,dm_{\Gamma\backslash G}(M)=\int_{\bR^d}\chi_\mathfrak{C}(\bx)\,dx=\min\set{\sig_1,\sig_2}.
\end{aligned}}
This completes the proof of \eqref{mixexp}.

For the proof of Corollary \ref{cor1}, observe that for $z_1=z_2=1$ one of the hypotheses of Theorem \ref{mainthm} holds under the assumption of Corollary \ref{cor1}. Combining \eqref{mixexp} with the property (b) above, Corollary \ref{cor1} then follows from Theorem \ref{mainthm}.

\subsection{Proof of Corollary \ref{cor2}}
In this section we show that Corollary \ref{cor1} implies Corollary \ref{cor2}. Throughout this section we assume that the statement of Corollary \ref{cor1} holds.

\begin{lem}\label{pclem'}
Let $h\in \operatorname{C}(\bS^{d-1})$ and $\sig_1,\sig_2>0$. Then
\eqlabel{pclem}{\begin{aligned}\lim_{N\to\infty}\int_{\bS^{d-1}}\sum_{\substack{j_1,j_2=1 \\ j_1\neq j_2}}^N\chi_{[0,\sig_1]}(N^{\frac{1}{d-1}}\bd(\balpha_{j_1},\balpha))&\chi_{[0,\sig_2]}(N^{\frac{1}{d-1}}\bd(\balpha_{j_2},\balpha))h(\balpha)d\balpha\\&=\sig_1\sig_2\int_{\bS^{d-1}}h(\balpha)d\balpha.\end{aligned}}
\end{lem}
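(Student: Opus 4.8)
The plan is to rewrite each inner sum as a value of the counting function $\cN_{c,T}$ and then to invoke Corollary~\ref{cor1} together with the mean-value identity~\eqref{mean}, separating the diagonal and off-diagonal contributions of the double sum. Throughout I write $N=N_c(T)$ and let $T\to\infty$. Since both Corollary~\ref{cor1} and \eqref{mean} are linear in the underlying measure, I would first reduce to the case $h\geq 0$ with $\int_{\bS^{d-1}}h>0$: a general real-valued $h\in\operatorname{C}(\bS^{d-1})$ decomposes as $h=(h+\|h\|_\infty)-\|h\|_\infty$ and the whole statement is linear in $h$; in the reduced case one applies those results with the probability measure $\lambda=h\,d\balpha\big/\int_{\bS^{d-1}}h$, which has continuous density.

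Next I would identify the inner sums. The condition $N^{\frac{1}{d-1}}\bd(\balpha_j,\balpha)\leq\sig_i$ says exactly that $\balpha_j$ lies in the geodesic ball of radius $\sig_i N^{-\frac{1}{d-1}}$ centred at $\balpha$. On the round sphere such a ball is determined by its centre and radius, and its volume depends only on the radius; hence it coincides with $\mathfrak{D}_{c,T}(\sig_i'(T),\balpha)$ for a unique $\sig_i'(T)>0$ independent of $\balpha$, and from the elementary volume asymptotics of small geodesic balls on $\bS^{d-1}$ together with $N_c(T)\sim\frac{1-c^d}{d}V_{\bS^{d-1}}T^d$ one checks that $\sig_i'(T)$ converges to $\sig_i$ as $T\to\infty$. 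Consequently $\sum_{j\leq N}\chi_{[0,\sig_i]}(N^{\frac{1}{d-1}}\bd(\balpha_j,\balpha))=\cN_{c,T}(\sig_i'(T),\balpha)$ for all but a Lebesgue-null set of $\balpha$ (the exceptional $\balpha$ being those for which some $\balpha_j$ lies exactly on the boundary sphere), which is harmless under the integral. Using $\chi_{[0,\sig_1]}\chi_{[0,\sig_2]}=\chi_{[0,\min\{\sig_1,\sig_2\}]}$ in the same way, the diagonal term $j_1=j_2$ of the double sum contributes $\cN_{c,T}(\tilde\sig(T),\balpha)$ with $\tilde\sig(T)\to\min\{\sig_1,\sig_2\}$, while the full double sum equals $\cN_{c,T}(\sig_1'(T),\balpha)\,\cN_{c,T}(\sig_2'(T),\balpha)$.

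It then remains to pass to the limit, which I would do by a monotonicity sandwich. As $\sig\mapsto\cN_{c,T}(\sig,\balpha)$ is nondecreasing, for every $\eps>0$ and all large $T$ one has $\cN_{c,T}(\sig_i-\eps,\balpha)\leq\cN_{c,T}(\sig_i'(T),\balpha)\leq\cN_{c,T}(\sig_i+\eps,\balpha)$; integrating the product of these bounds against $h\,d\balpha$, applying Corollary~\ref{cor1} with the fixed parameters $\sig_i\pm\eps$, and letting $\eps\to 0$ gives
\[
\int_{\bS^{d-1}}\cN_{c,T}(\sig_1'(T),\balpha)\,\cN_{c,T}(\sig_2'(T),\balpha)\,h(\balpha)\,d\balpha\ \longrightarrow\ \big(\sig_1\sig_2+\min\{\sig_1,\sig_2\}\big)\int_{\bS^{d-1}}h(\balpha)\,d\balpha .
\]
The same sandwich with \eqref{mean} in place of Corollary~\ref{cor1} gives $\int_{\bS^{d-1}}\cN_{c,T}(\tilde\sig(T),\balpha)\,h(\balpha)\,d\balpha\to\min\{\sig_1,\sig_2\}\int_{\bS^{d-1}}h(\balpha)\,d\balpha$ for the diagonal. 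Subtracting the two limits leaves exactly $\sig_1\sig_2\int_{\bS^{d-1}}h(\balpha)\,d\balpha$ for the off-diagonal sum, i.e.~\eqref{pclem}.

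I expect the main obstacle to be entirely bookkeeping: the dictionary of the second paragraph, namely that a geodesic ball of radius $\sig_iN^{-\frac{1}{d-1}}$ is, in the limit, the disc $\mathfrak{D}_{c,T}(\sig_i,\balpha)$ with the correct normalisation, and that the approximation $\sig_i'(T)\to\sig_i$ is uniform in the centre $\balpha$ with no escape of mass along the boundary sphere. Once that is in place, everything else is a soft monotone-sandwich deduction from Corollary~\ref{cor1} and~\eqref{mean}, and no new equidistribution input is required.
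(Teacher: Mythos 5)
Your proposal follows the same high-level strategy as the paper's proof: compute the unrestricted double sum via Corollary~\ref{cor1}, compute the diagonal contribution separately, and subtract. The paper applies Corollary~\ref{cor1} to the full sum exactly as you do (with the implicit dictionary between the geodesic ball $\{\balpha:\ N^{1/(d-1)}\bd(\balpha_j,\balpha)\leq\sig_i\}$ and the disc $\mathfrak{D}_{c,T}(\cdot,\balpha)$, which you instead spell out, including the sandwich by $\sig_i\pm\eps$ — a reasonable way to make the identification rigorous). The one genuine, though minor, difference is in the diagonal term: you invoke the mean-value identity~\eqref{mean} with $\lambda=h\,d\balpha/\int h$, whereas the paper argues more hands-on by replacing $h(\balpha)$ by $h(\balpha_j)$ (using continuity and the scale $\bd(\balpha_j,\balpha)\ll N^{-1/(d-1)}$), integrating out $\balpha$ to get $\min\{\sig_1,\sig_2\}/N$ per $j$, and then concluding $N^{-1}\sum_j h(\balpha_j)\to\int h$ from the uniform distribution of the directions $\balpha_j$. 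Both routes ultimately rest on the same uniform-distribution input, so the two arguments are essentially equivalent; your version is a bit more economical because it recycles a stated identity, while the paper's version is slightly more self-contained at this spot. Your separate treatment of the boundary ($\alpha_j$ falling exactly on $\partial\mathfrak{D}$, a null set in $\balpha$) is not called out in the paper but is a sensible remark.
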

\begin{proof}
By Corollary \ref{cor1}, the left-hand side of \eqref{pclem} without the restriction $j_1\neq j_2$ converges to
\eqlabel{offdiag}{\min\set{\sig_1,\sig_2}\int_{\bS^{d-1}}h(\balpha)d\balpha+\sig_1\sig_2\int_{\bS^{d-1}}h(\balpha)d\balpha.}
On the other hand, the diagonal part $j_1=j_2$ of the left-hand side of \eqref{pclem} is
\eqlabel{diag1}{\int_{\bS^{d-1}}\sum_{j=1}^N\chi_{[0,\min\set{\sig_1,\sig_2}]}(N^{\frac{1}{d-1}}\bd(\balpha_{j},\balpha))h(\balpha)d\balpha.}
Since $h$ is continuous and $\bd(\balpha_j,\balpha)\ll_{\sig_1,\sig_2} N^{-\frac{1}{d-1}}$ for any $1\leq j\leq N$, for any $\eps>0$ there exists $N_0$ such that for all $N\ge N_0$ we have $|h(\balpha_j)-h(\balpha)|<\eps$ for any $\balpha\in\bS^{d-1}$ and $1\leq j\leq N$. It follows that the integral \eqref{diag1} is approximated by
\eqlabel{diag2}{\int_{\bS^{d-1}}\sum_{j=1}^N\chi_{[0,\min\set{\sig_1,\sig_2}]}(N^{\frac{1}{d-1}}\bd(\balpha_{j},\balpha))h(\balpha_j)d\balpha}
up to error $<\eps\min\set{\sig_1,\sig_2}$. Hence, \eqref{diag1} converges to 
\eqlabel{diag3}{\begin{aligned}
&\lim_{N\to\infty}\int_{\bS^{d-1}}\sum_{j=1}^N\chi_{[0,\min\set{\sig_1,\sig_2}]}(N^{\frac{1}{d-1}}\bd(\balpha_{j},\balpha))h(\balpha)d\balpha\\&=\lim_{N\to\infty}\min\set{\sig_1,\sig_2}N^{-1}\sum_{j=1}^N h(\balpha_j)\\
&=\min\set{\sig_1,\sig_2}\int_{\bS^{d-1}}h(\balpha)d\balpha ,
\end{aligned}}
since the $\balpha_j$'s are uniformly distributed over $\bS^{d-1}$. Therefore, the second summand of \eqref{offdiag} is the off-diagonal contribution appearing in \eqref{pclem} as desired.
\end{proof}

\begin{lem}\label{pclem2'}
Let $g\in \operatorname{C}(\bS^{d-1}\times\bS^{d-1})$ and $\sig_1,\sig_2>0$. Then
\eqlabel{pclem2}{\begin{aligned}\lim_{N\to\infty}\sum_{\substack{j_1,j_2=1\\ j_1\neq j_2}}^Ng(\balpha_{j_1},\balpha_{j_2})\int_{\bS^{d-1}}\chi_{[0,\sig_1]}(N^{\frac{1}{d-1}}\bd(\balpha_{j_1},\balpha))&\chi_{[0,\sig_2]}(N^{\frac{1}{d-1}}\bd(\balpha_{j_2},\balpha))d\balpha\\&=\sig_1\sig_2\int_{\bS^{d-1}}g(\balpha,\balpha)d\balpha.\end{aligned}}
\end{lem}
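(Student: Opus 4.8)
The plan is to deduce Lemma~\ref{pclem2'} from Lemma~\ref{pclem'}, using that only pairs $(\balpha_{j_1},\balpha_{j_2})$ at geodesic distance $O(N^{-1/(d-1)})$ contribute to the double sum, so that the weight $g(\balpha_{j_1},\balpha_{j_2})$ may be replaced, up to a small error, by its value on the diagonal, thereby reducing everything to the case already treated in Lemma~\ref{pclem'}.

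First I would move $g(\balpha_{j_1},\balpha_{j_2})$ inside the $\balpha$-integral. On the support of the integrand of the $(j_1,j_2)$-term one has $\bd(\balpha_{j_1},\balpha)\leq\sig_1 N^{-1/(d-1)}$ and $\bd(\balpha_{j_2},\balpha)\leq\sig_2 N^{-1/(d-1)}$, hence by the triangle inequality $\balpha_{j_1}$, $\balpha_{j_2}$ and $\balpha$ all lie in a geodesic ball of radius $(\sig_1+\sig_2)N^{-1/(d-1)}$. Since $g$ is uniformly continuous on the compact space $\bS^{d-1}\times\bS^{d-1}$, given $\eps>0$ there is $N_0$ so that for all $N\geq N_0$, all admissible pairs $(j_1,j_2)$, and all $\balpha$ in the support of the corresponding integrand, $|g(\balpha_{j_1},\balpha_{j_2})-g(\balpha,\balpha)|<\eps$. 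Consequently, for $N\geq N_0$ the difference between the left-hand side of \eqref{pclem2} and
\[
\sum_{\substack{j_1,j_2=1\\ j_1\neq j_2}}^N\int_{\bS^{d-1}}g(\balpha,\balpha)\,\chi_{[0,\sig_1]}(N^{\frac{1}{d-1}}\bd(\balpha_{j_1},\balpha))\,\chi_{[0,\sig_2]}(N^{\frac{1}{d-1}}\bd(\balpha_{j_2},\balpha))\,d\balpha
\]
is bounded in absolute value by $\eps$ times the same expression with $g(\balpha,\balpha)$ replaced by $1$; by Lemma~\ref{pclem'} applied with $h\equiv 1$ this unweighted pair sum converges (to $\sig_1\sig_2\operatorname{vol}_{\bS^{d-1}}(\bS^{d-1})$) and in particular stays bounded uniformly in $N$, so the error introduced is $O(\eps)$ uniformly in $N\geq N_0$.

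It then remains to evaluate the displayed expression, and this is precisely Lemma~\ref{pclem'} with the continuous test function $h(\balpha):=g(\balpha,\balpha)\in\operatorname{C}(\bS^{d-1})$: as $N\to\infty$ it converges to $\sig_1\sig_2\int_{\bS^{d-1}}g(\balpha,\balpha)\,d\balpha$. Combining this with the previous step and letting $\eps\to 0$ yields \eqref{pclem2}. There is no genuine difficulty here: the only point requiring a little care is the uniform-in-$N$ bound on the unweighted pair sum $\sum_{j_1\neq j_2}\int_{\bS^{d-1}}\chi_{[0,\sig_1]}(\cdots)\chi_{[0,\sig_2]}(\cdots)\,d\balpha$, which is exactly what Lemma~\ref{pclem'} with $h\equiv 1$ provides, and the elementary observation that contributing pairs cluster at scale $N^{-1/(d-1)}$, which lets the uniform continuity of $g$ do the rest.
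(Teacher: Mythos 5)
Your proof is correct and follows essentially the same route as the paper: replace $g(\balpha_{j_1},\balpha_{j_2})$ by $g(\balpha,\balpha)$ using that contributing pairs (i.e.\ those on the support of the integrand) lie within $O(N^{-1/(d-1)})$ of each other, control the resulting error via Lemma~\ref{pclem'} with $h\equiv 1$, and then apply Lemma~\ref{pclem'} with $h(\balpha)=g(\balpha,\balpha)$. Your remark about the restriction to the support of the integrand is in fact slightly more careful than the paper's phrasing.
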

\begin{proof}
Since $g$ is continuous and $\bd(\balpha_j,\balpha)\ll_{\sig_1,\sig_2}N^{-\frac{1}{d-1}}$ for any $1\leq j\leq N$, for any $\eps>0$ there exists $N_0$ such that for all $N\geq N_0$ we have $|g(\balpha,\balpha)-g(\balpha_{j_1},\balpha_{j_2})|<\eps$ for any $\balpha\in\bS^{d-1}$ and $1\leq j_1,j_2\leq N$. It follows that the left-hand side of \eqref{pclem2} is approximated by
\eqlabel{approx}{\sum_{\substack{j_1,j_2=1\\ j_1\neq j_2}}^N\int_{\bS^{d-1}}g(\balpha,\balpha)\chi_{[0,\sig_1]}(N^{\frac{1}{d-1}}\bd(\balpha_{j_1},\balpha))\chi_{[0,\sig_2]}(N^{\frac{1}{d-1}}\bd(\balpha_{j_2},\balpha))d\balpha}
up to error
\eqlabel{error}{<\eps\int_{\bS^{d-1}}\sum_{\substack{j_1,j_2=1\\ j_1\neq j_2}}^N\chi_{[0,\sig_1]}(N^{\frac{1}{d-1}}\bd(\balpha_{j_1},\balpha))\chi_{[0,\sig_2]}(N^{\frac{1}{d-1}}\bd(\balpha_{j_2},\balpha))d\balpha\ll\eps\sig_1\sig_2,}
where the last inequality follows from Lemma \ref{pclem'} with the choice $h=1$. Applying Lemma \ref{pclem'} again for \eqref{approx} with the choice $h(\balpha)=g(\balpha,\balpha)$, we conclude the proof.
\end{proof}

Corollary \ref{cor2} now follows from Lemma \ref{pclem2'} by approximating $f\in \operatorname{C}_0(\bS^{d-1}\times\bS^{d-1}\times\bR)$ from above and below by finite linear combinations of functions of the form
\eqlabel{lincom}{\widetilde{f}(\bx,\by,z)=g(\bx,\by)\int_{\bR^{d-1}}\chi_{\cB_{\sigma_1}^{d-1}}(\bw)\chi_{\cB_{\sigma_2}^{d-1}}(\bw+z\be_1)\,d\bw,}
for suitable choices of $g\in \operatorname{C}(\bS^{d-1}\times\bS^{d-1})$ and $\sig_1,\sig_2>0$.

\appendix

\section{Brjuno type condition}\label{sec: Brjuno}

Following \cite{LDG19} (cf. also \cite{BF19}) we say that $\bxi\in\bR^{d}$ is a $s$-Brjuno vector if
\eq{\displaystyle\sum_{n=0}^{\infty}2^{-\frac{n}{s}}\displaystyle\displaystyle\max_{\substack{\bm\in\bZ^d\setminus\set{0}\\ 0<|\bm|\leq 2^{n}}}\log\frac{1}{|\bxi\cdot\bm|_\bZ}<\infty.}
The classical Brjuno condition corresponds to $s=1$.

In this section, we prove that for $s>\frac{\rho+1}{\nu}$, every $s$-Brjuno vector is $(\rho,0,\nu)$-vaguely Diophantine.
Given $\bxi\in\bR^{d}$ let us define $\phi:\bN\to\bR^{>0}$ by
\eq{\phi(N):=\max_{\substack{\bm\in\bZ^d\setminus\set{0}\\ 0<|\bm|\leq N}}\log\frac{1}{|\bxi\cdot\bm|_\bZ}.}
Then the definition of $\zeta(\bxi,T)$ can be written in terms of $\phi(N)$ as follows:
\eq{\begin{aligned}\zeta(\bxi,T)&=\min\set{N\in\bN: e^{-\phi(N)}\leq T^{-1}}\\&=\min\set{N\in\bN: \phi(N)\ge\log T}.\end{aligned}}

Suppose that $\bxi$ is $s$-Brjuno type for some $s>\frac{\rho+1}{\nu}$. Then we have $\sum_{n}2^{-\frac{n}{s}}\phi(2^{n})<\infty$, hence $\phi(t)\leq t^{\frac{1}{s}}\log 2$ for sufficiently large $t$. It follows that
\eq{\zeta(\bxi,2^{l-1})=\min\set{N\in\bN: \phi(N)\ge (l-1)\log2}\geq (l-1)^{s}}
for sufficiently large $l$. Thus, it implies that $\bxi$ is $(\rho,0,\nu)$-vaguely Diophantine since
\eq{\displaystyle\sum_{l=1}^{\infty}l^\rho\, \zeta(\bxi,2^{l-1})^{-\nu}\ll \displaystyle\sum_{l=1}^{\infty}l^{\rho-s\nu}<\infty.}

\section{Counterexamples}\label{sec: counterexamples}

The following theorem establishes that for $d=2$ there exist uncountably many $\bxi\in\bR^2\setminus\bQ^2$ such that the pair correlation function of the directions of the affine lattice $\bZ^2+\bxi$ diverges, whilst converging to the Poisson limit along a subsequence. 

\begin{thm}
    Fix $s>0$. Then there exists a set ${\mathcal C}\subset\bR^2$ of second Baire category with the property that for $\bxi\in{\mathcal C}$ there are sequences $(M_j)_{j\in\bN}$, $(N_j)_{j\in\bN}$ with $M_j,N_j\to\infty$ such that
\begin{equation}
R^2_{M_j}(s) \geq \frac{\log M_j}{\log\log\log M_j}
\end{equation}
and 
\begin{equation}
\lim_{j\to \infty }R^2_{N_j}(s)=\frac{\pi^{\frac{d-1}{2}} s^{d-1}}{\Gamma(\tfrac{d+1}{2})} .
\end{equation}
\end{thm}

The proof of this statement follows from Theorem \ref{thm000} and the next lemma, by a Baire category argument identical to that used in \cite{Sa97}; see also \cite[\S9]{M03}.

\begin{lem}
    Let $\bxi\in\bQ^2$. Then there exists a constant $C_{\bxi}$ such that for any $s>0$ and $T>0$ we have
\begin{equation}
R^2_{N_0(T)}(s) \geq C_{\bxi} \log N_0(T) .
\end{equation}
\end{lem}
\begin{proof}
We first observe that for any $s>0$ and $N\in\bN$
$$R^2_N(s) \geq \frac{1}{N} \#\left\{ (j_1,j_2) \,:\, j_1,j_2\leq N, \, j_1\neq j_2,\, \bupsilon_{j_1}=\bupsilon_{j_2} \right\}.$$
Let us first consider the case $\bxi=(0,0)$. We denote the set of primitive lattice points by $\bZ^2_{\operatorname{prim}}$. Then 
    \eq{\begin{aligned}
        R^2_{N_0(T)}(s) &\geq \frac{1}{N_0(T)} \sum_{\bv\in\bZ^2_{\operatorname{prim}}}\sum_{\substack{k,l\neq0,\; k\neq l,\\\|k\bv\|,\|l\bv\|\leq T}}1
        \\&\gg \frac{1}{T^2}\sum_{\substack{0\leq m<n\leq \frac{T}{2},\\ \operatorname{gcd}(m,n)=1}}\left(\frac{T}{n}\right)\left(\frac{T}{n}-1\right)
        \\&\gg \sum_{1\leq n\leq \frac{T}{2}}\frac{\varphi(n)}{n^2}
        \gg \log T \gg \log N_0(T),
    \end{aligned}}
where $\varphi$ denotes Euler's totient function. The above lower bound for the sum over the totient functions follows (via diadic decomposition) from the classic asymptotics
$\displaystyle\sum_{1\leq n\leq x} \varphi(n) \sim \frac{3}{\pi^2} x^2$.

The case $\bxi\in\bQ^2\setminus\set{0}$ is similar to the above. Let $\bxi=\frac{1}{q}\bp$ for some $\bp=(p_1,p_2)\in\bZ^2\setminus\set{0}$ and $q\in\bN$. Then we have
\eq{\begin{aligned}
        R^2_{N_0(T)}(s) &\geq \frac{1}{N_0(T)} \sum_{\bv\in\bZ^2_{\operatorname{prim}}}\sum_{\substack{k,l\neq 0,\;k\neq l,\\ \|k\bv\|\leq qT,\; k\bv\equiv \bp\; (\operatorname{mod} \;q),\\ \|l\bv\|\leq qT,\;l\bv\equiv \bp\; (\operatorname{mod} \;q)}}1
        \\&\gg \frac{1}{T^2}\sum_{\substack{0\leq m<n\leq \frac{T}{2q},\\ \operatorname{gcd}(m,n)=1}}\left(\frac{T}{n}\right)\left(\frac{T}{n}-1\right)\\&\gg \sum_{1\leq n\leq \frac{T}{2q}}\frac{\varphi(n)}{n^2} \gg \log T \gg \log N_0(T).
    \end{aligned}}
This completes the proof of the lemma.
\end{proof}

%\bibliographystyle{amsalpha}
%\bibliography{uribib}
\def\cprime{$'$} \def\cprime{$'$} \def\cprime{$'$}
\providecommand{\bysame}{\leavevmode\hbox to3em{\hrulefill}\thinspace}
\providecommand{\MR}{\relax\ifhmode\unskip\space\fi MR }
% \MRhref is called by the amsart/book/proc definition of \MR.
\providecommand{\MRhref}[2]{%
  \href{http://www.ams.org/mathscinet-getitem?mr=#1}{#2}
}
\providecommand{\href}[2]{#2}

\end{document}